\newlist{prooflist}{description}{1}
\setlist[prooflist]{font=\normalfont \itshape, labelindent = \parindent, leftmargin = 0pt}
\newlist{proofenum}{enumerate}{1}
\setlist[proofenum, 1]{label = $(\roman{proofenumi})$,  wide}
\newtheoremstyle{special-example}
  {}
  {}
  {}
  {\parindent}
  {\bfseries}
  {:}
  { }
  {}
  \theoremstyle{special-example}
\newtheorem{example}[equation]{Example}
\renewcommand{\tilde}{\widetilde}
\newcommand{\pp}{\mathbb P}
\newcommand{\OO}{\mathcal O}
\newcommand{\Diff}{\mathrm{Diff}}
\newcommand{\I}{\mathrm{i}}
\newcommand\rank{\operatorname{rank}}
\newcommand{\refl}[1]{^{[{#1}]}}
\newtheoremstyle{Lehn-it}
  {}
  {}
  {\itshape}
  {}
  {\bfseries}
  {$\;$\textmd{---}}
  { }
  {}
\newtheoremstyle{Lehn-up}
  {}
  {}
  {\upshape}
  {}
  {\bfseries}
  {$\;$\textmd{---}}
  { }
  {}
  \newtheoremstyle{up-list}
  {}
  {}
  {\upshape}
  {}
  {\bfseries}
  { }
  { }
  {}
\newtheoremstyle{Lehn-Bemerkung}
  {}
  {}
  {}
  {}
  {\itshape}
  {$\;$\textmd{---}}
  { }
  {}
\newtheoremstyle{citing}
  {}
  {}
  {\itshape}
  {}
  {\bfseries}
  {$\;$\textmd{---}}
  {.5em}
  {\thmnote{#3}}
\numberwithin{equation}{section}
\theoremstyle{Lehn-it}
\newtheorem{thm}[equation]{Theorem}
\newtheorem{lem}[equation]{Lemma}
\newtheorem{prop}[equation]{Proposition}
\newtheorem{cor}[equation]{Corollary}
\theoremstyle{Lehn-up}
\newtheorem{defin}[equation]{Definition}
\newtheorem{exam}[equation]{Example}
\newtheorem{construction}[equation]{Construction}
\theoremstyle{Lehn-Bemerkung}
\newtheorem{rem}[equation]{Remark}
\theoremstyle{up-list}
\theoremstyle{citing}
\renewcommand{\qed}{\hspace*{\fill}$\square$\vspace{2ex}}
\newcommand{\onto}{\twoheadrightarrow}
\DeclareFontFamily{OT1}{rsfs}{}
\DeclareFontShape{OT1}{rsfs}{n}{it}{<-> rsfs10}{}
\DeclareMathAlphabet{\curly}{OT1}{rsfs}{n}{it}
\DeclareMathOperator{\Proj}{\mathrm{Proj}}
\DeclareMathOperator{\Pic}{Pic}
\newcommand{\shom}{\curly{H}om}
\newcommand{\shext}{\curly{E}xt}
\DeclareMathOperator{\Aut}{Aut}
\DeclareMathOperator{\rk}{rk}
\DeclareMathOperator{\tensor}{\otimes}
\newcommand{\isom}{\cong}
\newcommand{\restr}[1]{{\raisebox{-0.1\height}{$|_{#1}$}}}
\renewcommand{\epsilon}{\varepsilon}
\renewcommand{\phi}{\varphi}
\renewcommand{\theta}{\vartheta}
\newcommand{\kc}{{\mathcal C}}
\newcommand{\kf}{{\mathcal F}}
\newcommand{\ki}{{\mathcal I}}
\newcommand{\kl}{{\mathcal L}}
\newcommand{\ko}{{\mathcal O}}
\newcommand{\kw}{{\mathcal W}}
\newcommand{\kx}{{\mathcal X}}
\newcommand{\IA}{{\mathbb A}}
\newcommand{\IC}{{\mathbb C}}
\newcommand{\IF}{{\mathbb F}}
\newcommand{\IP}{{\mathbb P}}
\newcommand{\IQ}{{\mathbb Q}}
\newcommand{\IZ}{{\mathbb Z}}
\newcommand{\gothD}{{\mathfrak D}}
\newcommand{\gothM}{{\mathfrak M}}
\newcommand{\gothR}{{\mathfrak R}}
\newcommand{\gothU}{{\mathfrak U}}
\title{ 2-Gorenstein stable surfaces with $K_X^2 = 1$ and $\chi(X) = 3$}
\author[S. Coughlan]{Stephen Coughlan}
\address{Stephen Coughlan\\Department of Mathematics and Computer Studies\\ Mary Immaculate College\\South Circular Road\\Limerick\\ Ireland}
\email{stephen.coughlan@mic.ul.ie}
\author{Marco Franciosi}
\address{Marco Franciosi\\Dipartimento di Matematica\\Universit\`a di Pisa \\Largo B. Pontecorvo 5\\I-56127  Pisa\\Italy}
\email{marco.franciosi@unipi.it}
\author{Rita Pardini}
\address{Rita Pardini\\Dipartimento di Matematica\\Universit\`a di Pisa \\Largo B. Pontecorvo 5\\I-56127  Pisa\\Italy}
\email{rita.pardini@unipi.it}
\author{S\"onke Rollenske}
\address{S\"onke Rollenske\\FB 12/Mathematik und Informatik\\
Philipps-Universit\"at Marburg\\
Hans-Meerwein-Str. 6\\
35032 Marburg\\
Germany}
\email{rollenske@mathematik.uni-marburg.de}
\begin{document}

\begin{abstract}
The compactification $\overline \gothM_{1,3}$ of the Gieseker moduli space of surfaces of general type with $K_X^2 =1 $ and $\chi(X)=3$  in the moduli space of stable surfaces parametrises so-called stable I-surfaces.

We classify all such surfaces which are 2-Gorenstein into four types using a mix of algebraic and geometric techniques. We find a new divisor in the closure of the Gieseker component and a new irreducible component of the moduli space. 
\end{abstract}

\maketitle

\setcounter{tocdepth}{1}
\tableofcontents

\section{Introduction}

The Gieseker moduli  space $\gothM_{1,3}$ of canonical models of surfaces of general type with $K_X^2 =1$ and $\chi(X) = 3$ is a rational variety of dimension 28 and it was classically known that  the surfaces it parametrises are hypersurfaces of degree 10 contained in the smooth locus of $\IP(1,1,2,5)$. More geometrically speaking, the bicanonical map realises these surfaces as double covers of the quadric cone in $\IP^3$, branched over the vertex and a  sufficiently general quintic section. 
 
 Nowadays, the Gieseker  moduli space   $\gothM_{a,b}$  is known to admit a modular compactification $\overline{\gothM}_{a,b}$, the moduli space of stable surfaces with $K_X^2 =a$ and $\chi(X) = b$, sometimes called KSBA-moduli space after Koll\'ar, Shepherd-Barron, and Alexeev (compare \cite{KollarModuli}). For brevity we call the surfaces parametrised by $\overline{\gothM}_{1,3}$ stable I-surfaces (see Definition \ref{def: I surface} for a precise definition).

Our detailed understanding of the Gieseker moduli space, or classical component, and the small values of the invariants make  $\overline{\gothM}_{1,3}$ into a fertile testing ground to explore stable surfaces and their moduli.

So far a full picture seems out of reach, and the current approaches aim for classification under some extra conditions on the singularities. The first result in this direction was the extension of the classical description to Gorenstein stable I-surfaces \cite{FPR17a}, which was refined and explored further in \cite{CFPR22} from a Hodge theoretic perspective. In \cite{FPRR22, CFPRR23} we explored surfaces with few T-singularities, finding several divisors and an additional component. Meanwhile in \cite{GPSZ24} Gallardo,  Pearlstein, Schaffler  and Zhang found eight more divisors by considering the stable replacement of Gorenstein degenerations with an exceptional unimodal singularity; in \cite{RT24} it was shown that there are not more divisors of this kind.

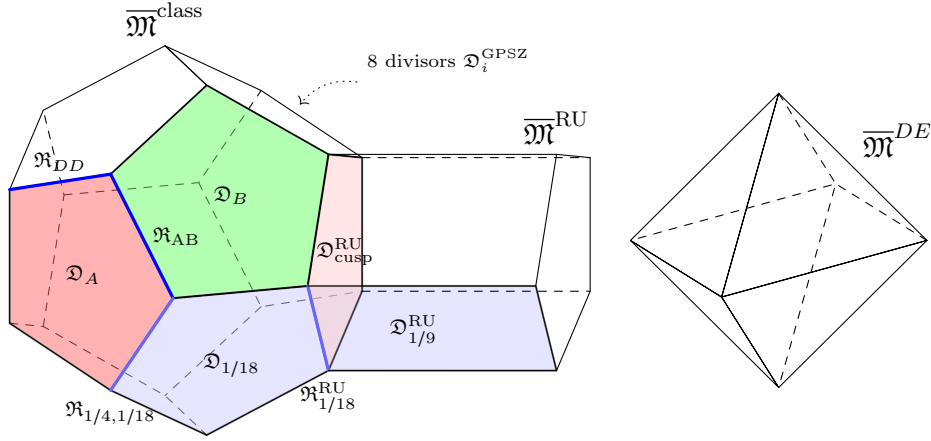
\begin{figure}[t]\label{fig: overview}
 \caption{Known strata in  $\overline{\gothM}_{1,3}$ (compare Table \ref{tab: strata} for notation)}
\begin{center}
\begin{tikzpicture}[scale = 1.5, line cap=round,line join=round]
\coordinate (P0) at (1.2508566957869456, 0.7603474449732492);
\coordinate (P1) at (1.2508566957869456, -1.1503255332779627);
\coordinate (P2) at (0.6598162824642664, 1.3249899183682845);
\coordinate (P3) at (0.6598162824642664, -0.5856830598829275);
\coordinate (P4) at (-0.6598162824642664, 0.5856830598829275);
\coordinate (P5) at (-0.6598162824642664, -1.3249899183682845);
\coordinate (P6) at (-1.2508566957869456, 1.1503255332779627);
\coordinate (P7) at (-1.2508566957869456, -0.7603474449732492);
\coordinate (P8) at (0.18264153207910092, 1.3712827900732547);
\coordinate (P9) at (0.18264153207910092, -1.7202510301231948);
\coordinate (P10) at (-0.18264153207910092, 1.7202510301231948);
\coordinate (P11) at (-0.18264153207910092, -1.3712827900732547);
\coordinate (P12) at (1.0685921597130592, -0.40283109341752804);
\coordinate (P13) at (0.11226868223217829, 0.5107796200274473);
\coordinate (P14) at (-0.11226868223217829, -0.5107796200274473);
\coordinate (P15) at (-1.0685921597130592, 0.40283109341752804);
\coordinate (P16) at (1.5457669100982248, 0.7317368768227392);
\coordinate (P17) at (-1.5457669100982248, 0.44912396512249836);
\coordinate (P18) at (1.5457669100982248, -0.44912396512249836);
\coordinate (P19) at (-1.5457669100982248, -0.7317368768227392);

   \path 
(P0) ++ (2,0) coordinate (Q0)
(P1) ++ (2,0) coordinate (Q1)
(P12) ++ (2,0) coordinate (Q12)
(P16) ++ (2,0) coordinate (Q16)
(P18) ++ (2,0) coordinate (Q18)
;

%
%
%
  
 \begin{scope}

  \begin{scope}[black]
    \draw
    (P0) -- (P8) -- (P10) -- (P2) -- (P16) -- cycle;
    \draw
    (P4) -- (P8) -- (P10) -- (P6) -- (P17) -- cycle;
    \draw
    (P0) -- (P8) -- (P4) -- (P14) -- (P12) -- cycle;
\draw 
(P16) -- (P0) -- (P12) -- (P1) -- (P18) -- cycle;
\draw
	(P1) -- (P12) -- (P14) -- (P5) -- (P9) -- cycle;
\draw 
	(P4) -- (P14) -- (P5) -- (P19) -- (P17) -- cycle;
	
      \end{scope}
  \begin{scope}[dashed]
  \draw(P19) -- (P7) -- (P11) -- (P9);
\draw(P7) -- (P15) -- (P6);
\draw(P15) -- (P13) -- (P2);
\draw(P13) -- (P3) -- (P18);
\draw(P3) -- (P11);

\node[above] at (P10) {$\overline\gothM^{\text{class}}$};

  \end{scope}
\begin{scope}[every node/.style = {font = \scriptsize}]

\filldraw[thick, fill = red!60, semitransparent] 
	(P4) -- (P14) -- (P5) -- (P19) -- (P17) -- cycle;
\node at (-.9,-.3 ) {$\gothD_A$};
	
\draw[thick, fill = green!60, semitransparent] 
 (P0) -- (P8) -- (P4) -- (P14) -- (P12) -- cycle;
\node at (.4,.4 ) {$\gothD_{B}$};

    \draw[thick, fill = blue!20, semitransparent] 
	(P1) -- (P12) -- (P14) -- (P5) -- (P9) -- cycle;
\node at (.4,-1.1 ) {$\gothD_{1/18}$};
  
    \draw[thick, fill = blue!20, semitransparent] 
	(P1) -- (Q1) -- (Q12) --  (P12) -- cycle;
\node at (2,-.8 ) {$\gothD^\text{RU}_{1/9}$};

\draw[thick, fill = red!20, semitransparent] 
(P16) -- (P0) -- (P12) -- (P1) -- (P18) -- cycle;
\node at (1.4,-.1 ) {$\gothD^\text{RU}_\text{cusp}$};

\draw[->, dotted] (1.5,1.4) node[above right]{\tiny 8 divisors $\gothD_i^\text{GPSZ}$} to[ bend right] ++ (-.5, -.2) ;

  
  \draw[very thick, blue]  (P17) to node[above, black] {$\gothR_{DD}$} (P4);
\draw[very thick, blue]  (P4) to node[right, black] {$\gothR_\text{AB}$} (P14);

    \draw[very thick, blue!60]  (P14) to  (P5) node[below, black] {$\gothR_{1/4, 1/18} $};
    \draw[very thick, blue!60]  (P12) to  (P1) node[below, black] {$\gothR^\text{RU}_{1/18} $};
    
 \end{scope}

 \end{scope}

  \begin{scope}

   \draw (P0) --  (Q0)node[above] {$\overline\gothM^{\text{RU}}$};
   \draw (P1) -- (Q1);
   \draw (P12) -- (Q12);
   \draw[dashed] (P16) -- (Q16);
   \draw[dashed] (P18) -- (Q18);
   
   \draw 
(Q16) -- (Q0) -- (Q12) -- (Q1) -- (Q18) -- cycle;
  \end{scope}

  \begin{scope}[scale = 1.3, line cap=round,line join=round, xshift = 4cm]

  \path
  ( 1, 0, 0) coordinate (A1)
  ( 0, 0,-1) coordinate (A2)
  (-1, 0, 0) coordinate (A3)
  ( 0, 0, 1) coordinate (A4)
  ( 0, 1, 0) coordinate (B1)
  ( 0,-1, 0) coordinate (B2);

  \begin{scope}[dashed]
    \draw
    (A1) -- (A2) -- (A3)
    (B1) -- (A2) -- (B2);
  \end{scope}

  \draw
  (A1) -- (A4) -- (B1)
  (A1) -- (A4) -- (B2)
  (A3) -- (A4) -- (B1)
  (A3) -- (A4) -- (B2)
  (B1) -- node[above right]  {$\overline \gothM^{DE}$} (A1) -- (B2) -- (A3) --cycle;

\end{scope}
\end{tikzpicture}

\end{center}

\end{figure}

In the present paper we completely  classify 2-Gorenstein stable I-surfaces, that is,  those with $2K_X$  Cartier, dividing  them into four types:
\begin{thm}\label{thm: four types}
Let $X$ be a 2-Gorenstein I-surface. Then $X$ is one of the following:
\begin{description}[font = \normalfont, nosep]
 \item[type A] Semi-log-canonical hypersurfaces in $\IP(1,1,2,5)$ not passing through the point $(0:0:0:1)$. This includes all Gorenstein stable I-surfaces and the divisor whose general element is a surface with one singularity of type $\frac 14(1,1)$. 
 \item[type B] These are reducible surfaces $X = X_1 \cup X_2$, where $X_1$ is a singular Enriques surface and $X_2$ is a singular K3 surface. They form a family of dimension $27$ in the closure of the Gieseker component, thus are smoothable, but no longer complete intersections.

 \item[type {DD}] These are reducible complete intersections of bidegree $(2,10)$ in weighted projective space $\IP(1,1,2,2,5)$ and the bicanonical map realises them as double covers of the union  of two planes. Each component is  a singular K3 surface and they form a subset of codimension two in the closure of the Gieseker component. 
 \item[type DE]  These are reducible surfaces $X = X_1 \cup X_2$, where both $X_i$ are singular K3 surfaces of a special kind. They form a 30-dimensional family and none of them is smoothable, that is, they form a new irreducible component. Their canonical ring is quite complicated.
\end{description}
We give an overview of the  known strata in $\overline{\gothM}_{1,3}$  
in Table \ref{tab: strata}, including dimension of the stratum, index of the general surface and a reference to precise information and proofs.
\end{thm}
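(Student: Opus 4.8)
The plan is to organise the classification along two axes — the type of the non-Gorenstein singularities, severely constrained by the index-$2$ hypothesis, and whether $X$ is irreducible and normal — and to control each case by a mixture of the graded canonical ring $R(X,K_X)$ and the pluricanonical maps. As numerical input, Riemann--Roch for the Cartier divisor $2K_X$ together with the vanishing $H^i(X,\OO_X(2K_X))=0$ for $i>0$ gives $h^0(X,\OO_X(2K_X))=\chi(\OO_X)+K_X^2=4$; since moreover $p_g(X)=2$ and $q(X)=0$ for I-surfaces, the canonical system $|K_X|$ is a pencil of curves of arithmetic genus $2$ and $|2K_X|$ defines a morphism $\phi_{2K}\colon X\to\IP^3$ with $(\phi_{2K}^{*}\OO(1))^{2}=4$. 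The first step is to pin down the slc singularities of index dividing $2$ that can actually occur: besides the Gorenstein ones (Du Val, simple elliptic, cusps, and their non-normal degenerations, already handled in \cite{FPR17a, CFPR22}), the list of normal index-$2$ singularities is a priori infinite, but adjunction on a general canonical curve together with $K_X^2=1$ forces the unique possibility $\tfrac14(1,1)$; the relevant non-normal index-$2$ points are then its degenerate-cusp analogues, and a short argument bounds the finitely many configurations compatible with $\chi(\OO_X)=3$.

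\textbf{The normal case.} If $X$ is normal and Gorenstein, \cite{FPR17a} gives that $X$ is a degree-$10$ hypersurface in $\IP(1,1,2,5)$ missing the point $(0:0:0:1)$. If instead $X$ has a (necessarily unique) point of type $\tfrac14(1,1)$, a resolution argument relating $R(X,K_X)$ to the Gorenstein case shows that $X$ is again such a hypersurface, no longer in general position; the bicanonical map still realises it as a double cover of the quadric cone, now with a branch quintic of special type. This is type \textbf{A}, and it contains the divisor $\gothD_A$. One also checks that a normal $X$ cannot be reducible, so all remaining cases are genuinely non-normal.

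\textbf{The non-normal case.} Let $\nu\colon\bar X\to X$ be the normalisation, $\bar D\subset\bar X$ the preimage of the conductor and $D\subset X$ its image; then $\bar X$ is a disjoint union of log-canonical pairs $(\bar X_i,\bar D_i)$ with $K_{\bar X}+\bar D=\nu^{*}K_X$, and one has the additivity relations $\sum_i(K_{\bar X_i}+\bar D_i)^{2}=1$ and $\chi(\OO_{\bar X})=\chi(\OO_X)-\chi(\OO_D)+\chi(\OO_{\bar D})$, tying $\sum_i\chi(\OO_{\bar X_i})$ to $3$ through the gluing along $D$. Because these numbers are so small, classifying the admissible $(\bar X_i,\bar D_i)$ against the list of surface minimal models — each with $K_{\bar X_i}+\bar D_i$ nef of tiny volume, the crucial subtlety being the fractional corrections from the different along $\bar D_i$ — leaves exactly three types: a union of a singular Enriques surface and a singular K3 surface glued along a curve (type \textbf{B}); a reducible surface with two K3 components for which $\phi_{2K}$ is $2{:}1$ onto a union of two planes, so that $X$ is a $(2,10)$ complete intersection in $\IP(1,1,2,2,5)$ (type \textbf{DD}); and a union $X_1\cup X_2$ of two singular K3 surfaces of a special kind (type \textbf{DE}). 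In each case we then compute $R(X,K_X)$ explicitly, obtaining the stated weighted-projective models and the dimensions of the strata, and we exhibit types \textbf{B} and \textbf{DD} inside the closure of the classical family to see that they are smoothable.

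\textbf{Main obstacle.} The most delicate points are, first, excluding the numerous a priori admissible singularity types and gluing patterns — this requires a systematic use of inversion of adjunction along the conductor, a careful reckoning of the different, and an analysis of the restriction of $K_X$ to the canonical pencil, and it is easy to overlook a configuration unless the bookkeeping is organised by the dual graph of components and conductor curves; and second, and conceptually harder, proving that type \textbf{DE} is a \emph{new} irreducible component, that is, that its surfaces are not smoothable. This last point is not an equation-chasing argument but a deformation-obstruction one: one must show that the piece of $T^1_X$ supported along $D$ carries no first-order deformation that both smooths the non-normal locus and preserves the slc condition (equivalently, produce a Hodge- or period-theoretic obstruction), in contrast with types \textbf{B} and \textbf{DD}, which degenerate from the Gieseker family by construction.
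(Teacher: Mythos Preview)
Your organizing dichotomy --- normal versus non-normal --- does not match the classification and creates a genuine gap. Type A surfaces need not be normal: the description is ``slc hypersurfaces of degree $10$ in $\IP(1,1,2,5)$'', and these include, for instance, the irreducible non-normal example $z^2=x_1x_2f_4(x_1,x_2,y)^2$ given after Theorem~\ref{thm: surface-A}, as well as reducible ones such as $z^2=g_5^2$. So when you pass to the non-normal case and claim that the normalisation analysis yields ``exactly three types: B, DD, DE'', you have either lost the non-normal type A surfaces or are silently sweeping them into one of the other bins; either way the case analysis is incomplete. Relatedly, the assertion that in the normal index-$2$ case the only possible singularity is a unique $\tfrac14(1,1)$ is not argued, and in any event the paper never classifies singularities directly.

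The paper's organizing principle is different and is what makes the argument go through: one first proves (Proposition~\ref{prop: canonical section}) that a general section of $\omega_X$ defines an honest canonical curve $C$, and then splits on whether $C$ can be taken \emph{reduced}. If so, the restriction of $R(X,K_X)$ to $C$ is a half-canonical ring of a genus-two curve with a ggs structure, classified in \cite{CFPR23a} into exactly two shapes; the hyperplane section principle then lifts these to the canonical ring of $X$, giving types A and B uniformly, with no separate treatment of normal/non-normal or of singularity types. If instead every canonical curve is non-reduced, Proposition~\ref{prop: canonical section}\,(iii) forces $X=X_1\cup X_2$ with $K_{\bar X_i}+\bar Z_i=0$, and a short $\chi$-count (Lemmas~\ref{lem: chi}--\ref{lem: types non-reduced}) pins down the two component patterns DD and DE. Finally, your proposed $T^1$-obstruction for the non-smoothability of type DE is not the route taken: the paper instead argues (Lemma~\ref{lem: DE no involution}, Proposition~\ref{prop: DE moduli}) that a smoothing would force the bicanonical involution of the general fibre to specialise to an involution $\sigma$ on $X$ with $\chi(X/\sigma)=1$, and then checks by hand that no such $\sigma$ exists on a DE surface.
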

Our current knowledge of the locus of 2-Gorenstein I-surfaces  in relation to the rest of $\overline{\gothM}_{1,3}$ is shown in Figure \ref{fig: overview}. 
\begin{table}
\caption{Known irreducible strata  in the moduli space $\overline \gothM_{1,3}$ of stable  I-surfaces}
\label{tab: strata}
  \begin{tabular}{ l ccp{6.1cm} l }
  \toprule
   Name & Dim. & index & closure of & reference \\
  \midrule
 $\overline{\gothM}^\text{class}$ &28 & 1 &  Gieseker component & \cite{BHPV, FPR17a}\\
 &&& & Theorem \ref{thm: surface-A}\\ 
 $\overline{\gothM}^\text{RU}$ &28 & 5 &  Rana--Urz\'ua component & \cite{FPRR22}\\
 $\overline{\gothM}^{DE}$ &  30& 2 & DE-component & Theorem \ref{thm: DE}\\
\midrule
 $\gothD_A$ & 27 & 2 & type A and not Gorenstein & Theorem \ref{thm: surface-A}\\ 
&&& resp.\ the $\frac 14(1,1)$ divisor &\cite{FPRR22} \\
 $\gothD_B$ & 27 & 2 & type B & Theorem \ref{thm: all type B}\\
 $\gothD_\text{cusp}^\text{RU}$ & 27 & 5 & cuspidal RU surfaces &\cite{CFPRR23}  \\
 $\gothD_{ 1/9}^\text{RU}$ & 27 & 15 & (nodal) RU surfaces with a $\frac 19 (  1,5)$ singularity  &\cite{CFPRR23} \\
 $\gothD_{ 1/{18}}$ & 27 & 3 & surfaces with a $\frac 1{18} (  1,5)$ singularity  &\cite{CFPRR23} \\
 $\gothD_i^\text{GPSZ} $ & 27 & $>2$ & 8 divisors from exceptional unimodal singularities &\cite{GPSZ24, RT24}\\
 \midrule
$\gothR_{{DD}}$ & 26& 2 & type {DD} & Theorem \ref{thm: DD} \\
$\gothR_{1/4, 1/18} $& 26  & 6 & surfaces with one $\frac 14 (1,1)$ and one $\frac 1{18} (  1,5)$ singularity & \cite{CFPRR23}\\
$\gothR_{1/18}^\text{RU} $& 26 & 15  & RU-surfaces with  one $\frac 1{18} (  1,5)$ singularity &
  \cite{CFPRR23} \\
  $\gothR_{AB}$ & 26& 2 & type B, in the closure of type A & Theorem \ref{thm: all type B} \\ 
   \bottomrule
  \end{tabular}

\end{table}

\subsection*{Outline of the paper and proof of Theorem \ref{thm: four types}}
Let $X$ be a stable 2-Gorenstein I-surface as defined in Definition \ref{def: I surface}. Then we show in Proposition \ref{prop: canonical section} that the general section of $\omega_X$ does not vanish on any component and thus defines a canonical curve $C$. 

If $C$ can be chosen reduced, then  we use the hyperplane section principle \cite{reid90} and our work on generalised Gorenstein spin structures on reduced curves of genus two \cite{CFPR23a} (summarised in Theorem \ref{thm: types and rings}) to compute the canonical ring. This gives the surfaces of type A and type B, classified in 
Sections \ref{sec: typeA} and \ref{sec: typeB}.

We were unable to argue along the same lines when the general canonical curve is non-reduced. However, in this case the surface is reducible and we succeed in giving  a geometric description of the components and the possible glueings, resulting in two cases  (Lemma \ref{lem: types non-reduced}).

Type {DD}  is identified in Section \ref{sect: DD} with a smoothable example already considered in \cite{FPR17a}. Type {DE} is shown to give a new irreducible component in Section \ref{sect: DE}; its  canonical ring is computed a posteriori from the geometric description.
\qed

\subsection*{Acknowledgements} 
S.R.\ is grateful for support by the DFG. 
 M.F.  and R.P. are  partially supported by the project PRIN 
 2022BTA242 ``Geometry of algebraic structures: Moduli, Invariants, Deformations''
  of Italian MUR  and members of GNSAGA of INDAM.  

\section{Notation and preliminary results}\label{notation}

\subsection{Set-up}
We work with schemes of finite type over the complex numbers.  

Given a sheaf $\kf$ on a scheme $X$, we denote by $\kf^{\vee}= \shom(\kf, \OO_X)$ its dual and 
 by $\kf^{[m]}$ the $m$-th reflexive power, i.e., the double dual of 
$\kf^{\tensor m}$.

A curve $C$  is a Cohen--Macaulay projective scheme of pure dimension 1 (possibly non-reduced or reducible).  It has a  dualising sheaf  $\omega_C$ and we denote by  $p_a(C)$ the arithmetic genus of $C$, that is, $p_a(C)=1-\chi(\OO_C)$. 
 The curve $C$ is Gorenstein if  $\omega_C$ is invertible; if this is the case  $K_C$ denotes a canonical divisor such that $\OO_C(K_C)\cong \omega_C$.

Our standard reference for stable surfaces is \cite{KollarSMMP}. 
A stable surface $X$ has semi-log-canonical singularities and ample canonical divisor. 
 In particular, it is by definition Cohen--Macaulay and Gorenstein 
in codimension one, so the canonical sheaf $\omega_X$ exists and is reflexive. We call $X$  $m$-Gorenstein if  the $m$-th reflexive power of the canonical sheaf $\omega_X^{[m]}$ is invertible.
 A canonical divisor is a Weil divisor $K_X$ such that  $\omega_X \cong \ko_X(K_X)$. 
 We use the notations $\chi(X) = \chi(\ko_X)$, $p_g(X) = h^0(X, \omega_X)$, and $q(X) = h^1(X, \ko_X)$.
 
 If $X$ is a non-normal stable surface, then   we denote by  $\pi\colon \bar X \to X$ the normalisation of $X$, and by $D\subset X $   and $ \bar D\subset \bar X$ the curves defined by the conductor ideal 
$ \shom_{\ko_X}(\pi_*\ko_{\bar X}, \ko_X)$. 
The invariants are related by   $K_X^2 = (K_{\bar X}+\bar D)^2$  and  $\chi(X) = \chi({\bar X})+\chi(D)-\chi({\bar D})$ (see also  \cite[Prop. 3.3]{FPR15a}).

The map $\pi\colon \bar D \to D$ on the conductor divisors is generically 
a double cover and thus  induces a rational  involution  on $\bar D$. Normalising the conductor loci we get an honest involution $\tau\colon \bar D^\nu\to \bar D^\nu$ such that $D^\nu = \bar D^\nu/\tau$ 
and such that the different $\Diff_{\bar D^\nu}(0)$ is $\tau$-invariant, which fits in the 
 the following pushout diagram:
\begin{equation}\label{diagr: pushout}
\begin{tikzcd}
    \bar X \dar{\pi}\rar[hookleftarrow]{\bar\iota} & \bar D\dar{\pi} & \bar D^\nu \lar[swap]{\bar\nu}\dar{/\tau}
    \\
X\rar[hookleftarrow]{\iota} &D &D^\nu\lar[swap]{\nu}
    \end{tikzcd}.
\end{equation}
By Koll\'ar's glueing construction \cite[Thm.~5.13]{KollarSMMP}, the surface $X$ can be reconstructed uniquely from the triple $(\bar X, \bar D, \tau)$.

 \subsection{I-surfaces}\label{section: I-surf}
Our main interest stems from  the Gieseker moduli space $\gothM_{1,3}$ and its closure in the moduli space of stable surfaces. Let us give  a precise definition. 
\begin{defin}\label{def: I surface}
 A stable I-surface is a stable surface with $K_X^2=1$, $\chi(X) = 3$ and Hilbert series of the canonical ring $\frac{1-t^{10}}{(1-t)^2(1-t^2)(1-t^5)}$, in particular $p_g(X) = 2$ and $q(X)=0$.
 
 We denote the moduli space of stable I-surfaces by $\overline\gothM_{1,3}$, the closure of the Gieseker component inside it by $\overline\gothM_{1,3}^\text{class}$.
 \end{defin}
 For canonical or just Gorenstein stable surfaces $X$ with fixed $K_X^2$ and $\chi(X)$ the Riemann--Roch formula and Kodaira vanishing determine all higher plurigenera as $P_m(X) = \chi(X) + \frac 12 m(m-1)$ for $m\geq2$. Together with the geometric genus  $p_g$ or equivalently the irregularity $q$ this determines the Hilbert function of the canonical ring $R(X, K_X)$. 
 
By \cite{FPR17a} and Proposition \ref{prop: canonical section} below a 2-Gorenstein stable surface with $K_X^2=1$ and $\chi(X) = 3$ satisfies $q(X) = 0$ and is an I-surface, but the assumption on the Hilbert series excludes some  components containing surfaces of higher index. 
 \begin{exam}
  Let $X=X_9\subset \IP(1,1,3,3)$ a general hypersurface of degree $9$. Then $X$ has three singularities of type $\frac 13 (1,1)$ and $\omega_X = \ko_X(1)$. Thus $3K_X$ is Cartier and ample, so $X$ is stable. The invariants are  $K_X^2 = 1$, $\chi(X) = 3$ with $p_g(X) = 2$. However, $P_2(X) = H^0(X, \ko_X(2)) = 3\neq \chi(X) +K_X^2$.
  
  This example and more general instances of this phenomenon were considered in \cite{rollenske23}.
 \end{exam}

 \section{Existence of canonical curves and consequences}
 \label{sec: canonical curves}
 If  $X$ is  a stable surface, we say that $X$ contains a canonical curve if there is a section $x_0\in H^0(X, \omega_X)$ which is non-zero at each generic point.
 In the irreducible  case this just means that  $x_0\neq0$, in general it can happen that a section vanishes identically on some but not every component. 
 \begin{example} 
 Let $C$ be a smooth plane quartic, $X_1=S^2C$ and $D_1$ a coordinate curve. We have $p_g(X_1) = q(X_1) = 3$ and $K_{X_1}^2 = 6$ (see    \cite[Example 1]{HP02}). Since $D_1$ is ample, the Riemann--Roch formula gives $h^0(K+D_1)=3=h^0(K)$, so that  $D_1$ is in the fixed part of $|K+D_1|$. 
 
 Now take $(X_2, D_2)=(\pp^2, C)$ and glue in the obvious way to get a stable surface  $X$, which consists of the two components intersecting with normal crossings in the curve $C$.  One can check that $K_X^2 = 13$, $p_g(X) = 3$, and $q(X) = 0$.

 By the above, all canonical sections vanish on the intersection curve, because their pullback to the normalisation vanishes on $D_1$, thus they vanish identically on the component $X_2$ and there is no canonical curve. 
  \end{example}

 \subsection{Existence of canonical curves}
 
In this section we prove the existence of canonical curves on any   $2$-Gorenstein I-surface. We start with slightly weaker hypotheses, which we then  prove to imply that we have an I-surface.

\begin{lem}\label{lem: X-reducible}
 Let $X$ be a reducible 2-Gorenstein  stable surface with $K_X^2 = 1$ and $\chi(X) = 3$. 
 Then:
\begin{enumerate} 
\item the normalization $(\bar X,\bar D)$ of $\bar X$ is equal to $(\bar X_1, \bar D_1)\sqcup (\bar X_2, \bar D_2)$,  with $\bar X_i$ irreducible and  $\bar D_i>0$, $i=1,2$; 
\item for $i=1,2$ the divisor $K_{\bar X_i}+\bar D_i$ is 2-Cartier and ample with $(K_{\bar X_i}+\bar D_i )^2=\frac 12$.
\end{enumerate}
\end{lem}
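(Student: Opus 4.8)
The plan is to exploit the additivity of the invariants under normalisation together with the fact that $2$-Gorensteinness passes to the normalisation and its conductor. First I would recall from the pushout diagram \refb{diagr: pushout} and \cite[Prop.~3.3]{FPR15a} that $K_X^2 = (K_{\bar X}+\bar D)^2$ and $\chi(X) = \chi(\bar X)+\chi(D)-\chi(\bar D)$. Since $X$ is $2$-Gorenstein, $\omega_X^{[2]}$ is invertible; pulling back along $\pi$ and using that $\pi$ is crepant for the log-canonical pair, $2(K_{\bar X}+\bar D)$ is Cartier, hence $(K_{\bar X}+\bar D)^2 \in \frac14\IZ$ a priori, and more precisely $(K_{\bar X}+\bar D)$ is $2$-Cartier. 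Because $X$ is stable, $K_X$ is ample, and ampleness pulls back under the finite morphism $\pi$, so $K_{\bar X}+\bar D$ is ample on each connected component of $\bar X$.

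Next I would argue that $\bar X$ has exactly two connected components, each irreducible, by a combination of connectedness of the conductor and a counting argument. Write $\bar X = \bigsqcup_j \bar X_j$ with $\bar X_j$ irreducible, and $\bar D_j = \bar D\cap \bar X_j$. Each summand $(\bar X_j,\bar D_j)$ is a log-canonical pair with $K_{\bar X_j}+\bar D_j$ ample and $2$-Cartier, so $(K_{\bar X_j}+\bar D_j)^2$ is a positive element of $\frac14\IZ$, in particular $\geq \frac14$. Moreover $\bar D_j\neq 0$: if $\bar D_j = 0$ for some $j$, then $\bar X_j$ is a connected component of $X$ itself, contradicting that $X$ is stable with $\chi(X)=3$ and $K_X^2=1$ (a normal stable surface with $K^2=1$ cannot be a connected component of $X$ because then $X$ would be that surface alone, as $X$ is connected — here I would invoke that stable surfaces are connected, or more simply that $\bar D_j = 0$ forces $X$ disconnected unless $X = \bar X_j$, but $X$ is reducible). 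Then $(K_{\bar X}+\bar D)^2 = \sum_j (K_{\bar X_j}+\bar D_j)^2 = K_X^2 = 1$, and since each term is $\geq \frac14$ there are at most four summands. To cut down to exactly two, I would use the $\chi$ relation: $\chi(\bar X) = \sum_j \chi(\bar X_j)$ and the Noether-type inequality $\chi(\bar X_j)\geq 1$ for each minimal model of a log-canonical surface (or directly that a surface with an ample $\IQ$-divisor has $\chi\geq 1$ after semi-resolution), combined with $\chi(X) = 3 = \chi(\bar X)+\chi(D)-\chi(\bar D)$ and $\chi(D)-\chi(\bar D)\leq 0$ since $D^\nu\to \bar D^\nu$ is a quotient by an involution (so $\chi(\OO_{\bar D})\geq \chi(\OO_D)$ roughly); this should force at most two, and reducibility of $X$ forces at least two. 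Pinning the count to two then gives, via $1 = \sum_{j=1}^2(K_{\bar X_j}+\bar D_j)^2$ with each term in $\frac14\IZ_{>0}$, that both terms equal $\frac12$.

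Finally, for part (ii) I would consolidate: I have already shown each $K_{\bar X_i}+\bar D_i$ is $2$-Cartier (from $2$-Gorensteinness of $X$ pulled back, noting that the conductor is where the non-normality lives so the reflexive power issue is entirely captured on $\bar X$) and ample (from ampleness of $K_X$ and finiteness of $\pi$), and the self-intersection value $\frac12$ falls out of the additivity of $K_X^2 = 1$ over the two components together with the lower bound $\frac14$ forcing the partition $1 = \frac12+\frac12$ — the only partition of $1$ into two elements of $\frac14\IZ_{>0}$ other than $\frac14+\frac34$, which I would exclude by symmetry of the two components or by the parallel $\chi$-computation (each $\bar X_i$ then having the same invariants).

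I expect the main obstacle to be the exact bookkeeping that forces precisely two components rather than three or four, and in particular ruling out the asymmetric split $\frac14 + \frac34$: this requires carefully combining the $K^2$ additivity, the $\chi$ additivity through the non-normal locus, and the stability/ampleness constraints on each piece, rather than any single clean inequality. A secondary subtlety is checking that $2$-Gorensteinness of $X$ really does descend to $2$-Cartierness of $K_{\bar X_i}+\bar D_i$ (as opposed to merely $K_{\bar X_i}$), which I would handle by the standard fact that $\pi^*\omega_X^{[m]} \cong \omega_{\bar X}^{[m]}(m\bar D)$ for the normalisation of a demi-normal scheme.
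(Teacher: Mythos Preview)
Your setup is fine: $2(K_{\bar X_j}+\bar D_j)=\pi^*(2K_X)|_{\bar X_j}$ is ample Cartier, and $\bar D_j>0$ because it contains the preimage of the intersection curve of the components. The gap is in the counting. You only extract $(K_{\bar X_j}+\bar D_j)^2\in\frac14\IZ_{>0}$ by squaring the Cartier divisor, and then try to whittle four possible components down to two via a $\chi$-argument that does not actually work: there is no general ``Noether-type'' bound $\chi(\bar X_j)\geq 1$ for a normal lc surface with ample $K+D$ (ruled surfaces with boundary already violate it), and the sign of $\chi(D)-\chi(\bar D)$ is not controlled the way you claim. So the reduction to two components, and the exclusion of the $\tfrac14+\tfrac34$ split, are not established.

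The paper's proof avoids all of this with one sharper observation. On the normal surface $\bar X_j$, the divisor $K_{\bar X_j}+\bar D_j$ is an honest \emph{Weil} divisor (integer coefficients), while $2(K_{\bar X_j}+\bar D_j)$ is Cartier. The intersection of a Cartier divisor with a Weil divisor is an integer, and here it is positive since $K_{\bar X_j}+\bar D_j$ is ample. Hence
\[
2=2K_X^2=\sum_j \bigl(2(K_{\bar X_j}+\bar D_j)\bigr)\cdot(K_{\bar X_j}+\bar D_j)
\]
is a sum of positive integers. Reducibility gives at least two summands, so there are exactly two, each equal to $1$, i.e.\ $(K_{\bar X_i}+\bar D_i)^2=\tfrac12$. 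No $\chi$-bookkeeping or symmetry argument is needed.
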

\begin{proof}
Let $\pi\colon \bar X = \bigsqcup_{i=1}^r\bar X_i \to X$ be the normalisation.
Since $2K_X$ is an ample Cartier divisor, its pullback $\pi^* 2K_X|_{\bar X_i} = 2( K_{\bar X_i} + \bar D_i)$ is an ample Cartier divisor as well, and \[ 2 = 2K_X^2 = \sum_{i=1}^r \left(2(K_{\bar X_i} + \bar D_i)\right)(K_{\bar X_i} + \bar D_i) .\]
Because the intersection of an ample Cartier divisor with a Weil divisor is a positive integer and because by assumption we have more than one component, we get $r = 2$ and $(K_{\bar X_i} + \bar D_i)^2= \frac 12$. 
The divisors $ \bar D_i$  cannot be zero, because they contain the preimage of the intersection of the two components, which is a  curve because $X$ is $S_2$ and connected.
\end{proof}

If $X$ is a reducible 2-Gorenstein stable surface with $K_X^2 = 1$ and $\chi(X) = 3$ and  normalization $(\bar X_1, \bar D_1)\sqcup (\bar X_2, \bar D_2)$, we write $\bar D_i= \bar Z_i+ \bar \Gamma_i$, 
where $\bar \Gamma_1$ is glued to  $ \bar \Gamma_2$ by $\pi$, while $ \bar Z_i$ is glued to itself for $i=1,2$. Note that $ \bar \Gamma_i>0$ since $X$ is connected and $S_2$, while $\bar Z_i$ may be zero. 

With this notation in place we can state the main result of this section:

\begin{prop}\label{prop: canonical section}
Let $X$ be a 2-Gorenstein stable surface with $K_X^2 = 1$ and $\chi(X) = 3$. Then:
\begin{enumerate}
\item  the zero locus of a general section of $H^0(K_X)$ is 1-dimensional;
\item $q(X)=0$ and $X$ is an I-surface;
\item all  canonical curves are non-reduced if and only if  $X=X_1\cup X_2$ is reducible and $K_{\bar X_i}+\bar Z_i=0$ for $i=1,2$. In this case, every  canonical curve is  supported on $X_1\cap X_2$.
\end{enumerate}
\end{prop}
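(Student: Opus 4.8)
The plan is to compute or estimate the relevant cohomology on the two possible types of surface, using the normalisation sequence and the structure results already in place. For part $(i)$, I would argue that $h^0(X,\omega_X) = p_g(X) \ge 2$: from the normalisation sequence $0\to\omega_X\to\pi_*\omega_{\bar X}(\bar D)\to\omega_D\to 0$ (or rather its variant involving the conductor) and the ampleness of $K_{\bar X_i}+\bar D_i$ from Lemma \ref{lem: X-reducible}, one gets enough sections; alternatively, $\chi(\omega_X)=\chi(\OO_X)=3$ by Serre duality together with $h^2(\omega_X)=h^0(\OO_X)=1$ and $h^1(\omega_X)=h^1(\OO_X)=q(X)$, so $p_g(X)=2+q(X)\ge 2$. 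Since $K_X$ is an ample Weil divisor and $h^0(\omega_X)\ge 2$, a pencil of sections cannot have a fixed component on every component of $X$ forcing base locus $2$-dimensional unless a section vanishes on a whole component; but a general member of a pencil on an ample polarisation is reduced-dimensional, so the zero locus of a \emph{general} section is at most $1$-dimensional. The genuine content is ruling out that a general section vanishes identically, which is exactly the dichotomy of part $(iii)$, so I would prove $(iii)$ first and deduce $(i)$ and $(ii)$ from it.

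For part $(iii)$, restrict a canonical section to each component $X_i$. On $\bar X_i$ the pullback of $\omega_X$ is $\omega_{\bar X_i}(\bar D_i) = \OO_{\bar X_i}(K_{\bar X_i}+\bar D_i)$. Write $\bar D_i = \bar Z_i + \bar\Gamma_i$ as in the text. A section of $\omega_X$ restricts, on the normalisation, to a section of $\OO_{\bar X_i}(K_{\bar X_i}+\bar D_i)$ satisfying the glueing condition along $\bar\Gamma_i$. The key computation is $h^0(\bar X_i, K_{\bar X_i}+\bar D_i)$ and, more precisely, $h^0(\bar X_i, K_{\bar X_i}+\bar\Gamma_i)$ — the latter being the space of sections that might vanish on $\bar Z_i$ but still glue. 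Using $(K_{\bar X_i}+\bar D_i)^2 = \tfrac12$, $K_{\bar X_i}+\bar D_i$ ample and $2$-Cartier, Riemann–Roch and Kawamata–Viehweg-type vanishing on the pair $(\bar X_i,\bar D_i)$ should pin these numbers down: one expects $h^0(K_{\bar X_i}+\bar D_i)$ to be small (order $1$ or $2$), and the constraint that the sections from the two sides glue along $\bar\Gamma_i$ further cuts things down. The surface $X$ has \emph{no} reduced canonical curve precisely when every canonical section restricts to zero on every component, i.e. the image of $H^0(X,\omega_X)\to H^0(\bar X_i, K_{\bar X_i}+\bar D_i)$ lands in the subspace of sections vanishing along the non-glued part — and I would show this forces $K_{\bar X_i}+\bar Z_i \equiv 0$, hence $=0$ since it is effective-or-trivial and $2$-Cartier with zero self-intersection against an ample class. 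Conversely if $K_{\bar X_i}+\bar Z_i=0$ for $i=1,2$, then $\omega_{\bar X_i}(\bar D_i)=\OO_{\bar X_i}(\bar\Gamma_i)$, so pulled-back canonical sections vanish on $\bar\Gamma_i = \pi\inv(X_1\cap X_2)$ to appropriate order and every canonical curve is supported on $X_1\cap X_2$, hence non-reduced (its support has arithmetic genus too small to carry a reduced canonical divisor, or more directly $\bar\Gamma_i$ is a proper subscheme of the divisor class).

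For part $(ii)$: once $(i)$ is known, a general canonical section defines an honest curve $C\in|K_X|$ with $C$ Gorenstein of arithmetic genus $p_a(C) = K_X\cdot(K_X+K_X) + 1 = K_X^2+1 = 2$ by adjunction (using $\omega_C = \omega_X(K_X)|_C$), since $\omega_X^{[2]}$ is invertible the adjunction is literal. Then $q(X)=h^1(\OO_X)$ is computed from $0\to\OO_X\to\omega_X\to\omega_C\to 0$ twisted appropriately, or from $0\to\OO_X(-C)\to\OO_X\to\OO_C\to 0$ together with $h^1(\OO_C)=2$ and $h^0,h^1$ of $\OO_X(-K_X)=\omega_X\dual$ (vanishing by ampleness and Serre duality $h^i(\omega_X\dual)=h^{2-i}(\omega_X^{[2]})$); this yields $q(X)=0$ and hence $p_g(X)=2$. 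The higher plurigenera are then forced by the Riemann–Roch/vanishing discussion already recalled in Section \ref{section: I-surf}, giving the stated Hilbert series, so $X$ is an I-surface.

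The main obstacle I anticipate is part $(iii)$, specifically the vanishing/Riemann–Roch bookkeeping on the non-normal pair $(\bar X_i,\bar D_i)$ with a merely $2$-Cartier, half-integral polarisation: one must be careful that $K_{\bar X_i}+\bar D_i$ need not be $\IQ$-Cartier-nef in a way that makes standard vanishing apply verbatim, and the glueing condition along $\bar\Gamma_i$ (encoded by $\tau$ and the different) has to be used to get the \emph{exact} dimension rather than an inequality. Establishing $h^0(\bar X_i,\omega_{\bar X_i}(\bar D_i))$ and the behaviour of the restriction maps precisely enough to get the clean "if and only if" — rather than just one implication — is where the real work lies.
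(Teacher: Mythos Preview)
Your approach to $(ii)$ via the restriction sequence does not close: with the Serre duality $h^i(\omega_X^{[-1]})=h^{2-i}(\omega_X^{[2]})$ (which is correct, since $\omega_X^{[-1]}$ is reflexive and $\shom(\omega_X^{[-1]},\omega_X)=\omega_X^{[2]}$), the long exact sequence of $0\to\omega_X^{[-1]}\to\ko_X\to\ko_C\to 0$ reads
\[0\to H^1(\ko_X)\to H^1(\ko_C)\to H^2(\omega_X^{[-1]})\to H^2(\ko_X)\to 0,\]
that is, $0\to\IC^q\to\IC^2\to\IC^4\to\IC^{2+q}\to 0$, which is exact for every $q\le 2$. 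The surjectivity you need is, via \eqref{seq 2}, equivalent to surjectivity of $H^0(\omega_X^{[2]})\to H^0(\omega_C)$, whose cokernel is $H^1(\omega_X)\cong\IC^q$ --- so the argument is circular. The paper instead assumes $p_g\ge 3$ and uses the Hopf lemma: in the irreducible case $h^0(2K_X)\ge 2p_g-1\ge 5$ contradicts $h^0(2K_X)=\chi+K_X^2=4$ (Riemann--Roch plus vanishing for the Cartier divisor $2K_X$); in the reducible case one needs the auxiliary bound of Lemma~\ref{lem: aux} and an explicit production of five independent bicanonical sections.

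For $(i)$ and $(iii)$ you are missing the engine of the argument, Lemma~\ref{lem: rho}: if a nonzero canonical section vanishes identically on $X_1$, then on $\bar X_2$ it gives a section of $K_{\bar X_2}+\bar D_2$ vanishing along the \emph{glued} curve $\bar\Gamma_2$ (you have this swapped with $\bar Z_2$), so $K_{\bar X_2}+\bar Z_2\ge 0$; intersecting with the ample Cartier divisor $2(K_{\bar X_2}+\bar D_2)$ then forces $K_{\bar X_2}+\bar Z_2=0$, $\bar\Gamma_2$ irreducible, and $\dim\ker\rho_1=1$. This is an intersection-theory argument, not a Riemann--Roch/vanishing computation of $h^0(K_{\bar X_i}+\bar D_i)$ as you propose (those numbers are in fact not determined by the data). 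It immediately gives $(i)$: if every section vanished on $X_1$ then $p_g\le 1$, contradicting $p_g=2+q\ge 2$. Your plan to prove $(iii)$ first also breaks down at the converse: knowing $K_{\bar X_i}+\bar Z_i=0$ one constructs the two sections $(\sigma_1,0)$ and $(0,\sigma_2)$, but to conclude that \emph{every} canonical curve is supported on $X_1\cap X_2$ one needs these to span $H^0(\omega_X)$, i.e.\ $p_g=2$, which is part of $(ii)$.
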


Before  proving Proposition \ref{prop: canonical section} we give some auxiliary results. 
\begin{lem}\label{lem: aux}
Let $Y$ be a smooth projective surface and let $M$ be a nef  line bundle. 
If  $M^2=2$, then $h^0(M)\le 4$. 
\end{lem}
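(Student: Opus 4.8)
The plan is to study the rational map defined by $|M|$ directly, after passing to a base-point-free model. If $h^0(M)\le 1$ there is nothing to prove, so assume $h^0(M)=n+1$ with $n=\dim|M|\ge 1$. First I would resolve the base locus: let $\sigma\colon Y'\to Y$ be a composition of blow-ups such that $\sigma^*M=N+V$, where $N$ is base-point-free and $V\ge 0$ is the fixed part. Since every section of $\sigma^*M$ vanishes along $V$, we have $h^0(Y',N)=h^0(Y',\sigma^*M)=h^0(Y,M)=n+1$, and $N$ is nef with
\[
N^2=N\cdot\sigma^*M-N\cdot V\le N\cdot\sigma^*M=(\sigma^*M)^2-\sigma^*M\cdot V\le(\sigma^*M)^2=M^2=2,
\]
using twice that a nef class meets an effective divisor non-negatively. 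Hence $0\le N^2\le 2$, and $\phi:=\phi_{|N|}\colon Y'\to\IP^n$ is a morphism onto a nondegenerate irreducible subvariety $Z$ with $\dim Z\in\{1,2\}$.

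If $\dim Z=2$, then $\phi$ is generically finite onto $Z$ of some degree $e\ge 1$ and $N=\phi^*\OO_Z(1)$, so $N^2=e\cdot\deg Z\ge\deg Z$ by the projection formula. A nondegenerate irreducible surface in $\IP^n$ has degree at least $n-1$, so $n-1\le\deg Z\le N^2\le 2$, and therefore $h^0(M)=n+1\le 4$. This is the sharp case, attained for $Y=\IP^1\times\IP^1$ with $M=\OO(1,1)$.

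If $\dim Z=1$, I would take the Stein factorisation of $\phi$, obtaining a fibration $f\colon Y'\to B$ onto a smooth projective curve $B$ with $N=f^*L$ for some $L\in\Pic(B)$. Since $f_*\OO_{Y'}=\OO_B$, we get $h^0(B,L)=h^0(Y',N)=n+1\ge 2$, so $L$ is effective, $\deg L\ge 0$, and $f^*L$ is numerically equivalent to $(\deg L)\,F$, where $F$ denotes a fibre of $f$. The point is to bound $\deg L$. Now $\sigma^*M$ is nef and big, since $(\sigma^*M)^2=2>0$, so its orthogonal complement in $\mathrm{Num}(Y')_\IR$ is negative definite by the Hodge index theorem; as the nonzero effective divisor $F$ satisfies $F^2=0$, it is not numerically trivial, whence $\sigma^*M\cdot F\ge 1$. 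Writing $\sigma^*M=f^*L+V$ and using that $V$ is effective and $f^*L$ is nef,
\[
2=(\sigma^*M)^2=\sigma^*M\cdot f^*L+\sigma^*M\cdot V\ge(\deg L)(\sigma^*M\cdot F)\ge\deg L .
\]
Since $h^0(B,L)\le\deg L+1$ on any smooth curve, we conclude $h^0(M)=h^0(B,L)\le 3$.

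The main obstacle is the case $\dim Z=1$, where $|M|$ is composed with a pencil and the elementary degree estimate is unavailable; there one has to feed the positivity of $M$ back in through the Hodge index theorem in order to bound $\deg L$. The rest is bookkeeping with nef–effective intersection numbers once the base locus has been resolved.
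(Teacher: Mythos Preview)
Your proof is correct and follows essentially the same approach as the paper's: decompose off the fixed part, split into cases according to whether the image of the induced map is a surface or a curve, and use minimal-degree bounds together with nef--effective intersection inequalities. The only differences are cosmetic: you blow up to make the moving part base-point-free (the paper works directly with the moving part $|D|$ on $Y$), and in the curve case you spell out the Stein factorisation and the Hodge-index step $\sigma^*M\cdot F\ge 1$, which the paper leaves implicit in the inequality $MG\ge 1$.
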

\begin{proof}
We can of course assume that $r:=h^0(M)-1\ge 1$. 
Write $|M|=Z+|D|$,  where $Z$ is the fixed part and $D$ the moving part,  denote by $h\colon X\to  \pp^r$ the map defined by $|D|$  and let $d$ be the degree of   the image $\Sigma$  of $h$.   Assume first that $\Sigma$ is a surface: then since $M$ and $D$ are  nef we have $2=M^2\ge MD\ge D^2\ge d\ge r-1$, namely $r=3$.
If instead $\Sigma$ is a curve, then $D$ is numerically equivalent to $dG$, where $G$ is irreducible and $2=M^2\ge MD=dMG\ge d\ge r$, so $r\le 2$ in this case.
\end{proof}

If  $X=X_1\cup X_2$ is a reducible stable 2-Gorenstein stable surface with $K_X^2 = 1$ and $\chi(X) = 3$, then we denote by 
\[\rho=\rho_1\oplus  \rho_2\colon H^0(K_X)\to H^0(K_{\bar X_1}+\bar D_1)\oplus H^0(K_{\bar X_2}+\bar D_2) \]the pullback map. 
\begin{lem}\label{lem: rho}
In the above set-up and notation, assume that $\rho_1$ is not injective: then $\dim \ker \rho_1=1$ and $K_{\bar X_2}+ \bar Z_2=0$.
\end{lem}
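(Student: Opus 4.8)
The plan is to combine injectivity of the total pullback map $\rho=\rho_1\oplus\rho_2$ with a residue computation at the glueing, and then to close the estimate with the numerical constraints of Lemma~\ref{lem: X-reducible}.

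First I would record that $\rho$ is injective. It factors as
\[
\rho\colon H^0(X,\omega_X)\hookrightarrow H^0(\bar X,\pi^*\omega_X)\subseteq H^0(\bar X_1,\omega_{\bar X_1}(\bar D_1))\oplus H^0(\bar X_2,\omega_{\bar X_2}(\bar D_2)),
\]
where the first map is injective because $\omega_X$ is torsion-free and $\pi$ is finite and surjective, and where I use that $\pi^*\omega_X$ agrees with the reflexive sheaf $\omega_{\bar X}(\bar D)$ away from a closed subset of codimension at least two, so that its global sections are contained in those of $\omega_{\bar X}(\bar D)$, which split over the two (disjoint) components. Injectivity of $\rho$ gives that $\rho_2$ is injective on $\ker\rho_1$, so $\dim\ker\rho_1\le h^0(\bar X_2,\omega_{\bar X_2}(\bar D_2))$. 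To sharpen this, take $0\ne s\in\ker\rho_1$ and set $\sigma:=\rho_2(s)\ne 0$. By Koll\'ar's description of $\omega_X$ in the glueing construction \cite[Thm.~5.13]{KollarSMMP}, a pair of local sections of $\omega_{\bar X_1}(\bar D_1)\oplus\omega_{\bar X_2}(\bar D_2)$ descends to $\omega_X$ only when its residues along $\bar\Gamma_1$ and $\bar\Gamma_2$ correspond under $\tau$; as $s$ vanishes on $\bar X_1$, the residue of $\sigma$ along $\bar\Gamma_2$ vanishes, so $\sigma$ is polefree along $\bar\Gamma_2$, and reflexivity of $\omega_{\bar X_2}(\bar D_2)$ then forces
\[
\sigma\in H^0\bigl(\bar X_2,\omega_{\bar X_2}(\bar D_2-\bar\Gamma_2)\bigr)=H^0\bigl(\bar X_2,\ko_{\bar X_2}(K_{\bar X_2}+\bar Z_2)\bigr).
\]
Thus $\rho_2$ embeds $\ker\rho_1$ into $H^0(\bar X_2,K_{\bar X_2}+\bar Z_2)$.

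Next I would bound $h^0(\bar X_2,K_{\bar X_2}+\bar Z_2)$. Set $H_2:=K_{\bar X_2}+\bar D_2$, which by Lemma~\ref{lem: X-reducible} is ample, has $2H_2$ Cartier, and satisfies $H_2^2=\tfrac12$. Since $\bar\Gamma_2>0$ the positive integer $2H_2\cdot\bar\Gamma_2$ is at least $1$, so $H_2\cdot\bar\Gamma_2\ge\tfrac12$ and hence
\[
(K_{\bar X_2}+\bar Z_2)\cdot H_2=H_2^2-H_2\cdot\bar\Gamma_2\le 0.
\]
If $h^0(\bar X_2,K_{\bar X_2}+\bar Z_2)\ge 1$, pick an effective Weil divisor $E\sim K_{\bar X_2}+\bar Z_2$: then $E\cdot H_2\le 0$, while $E\cdot H_2\ge 0$ as $H_2$ is ample, with equality only for $E=0$. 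So $E=0$, i.e.\ $K_{\bar X_2}+\bar Z_2=0$, and then $h^0(\bar X_2,K_{\bar X_2}+\bar Z_2)=1$ because $\bar X_2$ is integral. Combining, $\dim\ker\rho_1\le h^0(\bar X_2,K_{\bar X_2}+\bar Z_2)\le 1$; since $\rho_1$ is not injective by hypothesis, $\dim\ker\rho_1=1$, which forces $h^0(\bar X_2,K_{\bar X_2}+\bar Z_2)=1$ and therefore $K_{\bar X_2}+\bar Z_2=0$.

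The step I expect to need the most care is the residue argument: one must verify that vanishing of $\rho_1(s)$ really propagates through the glueing datum $(\bar X,\bar D,\tau)$ (involution and different) to the statement that $\sigma$ is polefree along $\bar\Gamma_2$, and then invoke reflexivity of $\omega_{\bar X_2}(\bar D_2)$ to upgrade this codimension-one statement to membership in $H^0(\bar X_2,\omega_{\bar X_2}(\bar Z_2))$. Everything after that is elementary intersection theory against the ample class $H_2$.
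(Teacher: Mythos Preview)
Your argument is correct and follows essentially the same route as the paper: show that $\rho_2$ maps $\ker\rho_1$ into $H^0(\bar X_2,K_{\bar X_2}+\bar Z_2)$, then use intersection with the ample class $H_2=K_{\bar X_2}+\bar D_2$ to force $K_{\bar X_2}+\bar Z_2=0$, whence $\dim\ker\rho_1\le h^0(\ko_{\bar X_2})=1$. The only cosmetic difference is that the paper phrases the first step as ``$\rho_2(s)$ vanishes on $\bar\Gamma_2$'' (since $s$ vanishes on $X_1$, hence on $\Gamma$), whereas you invoke the residue compatibility from Koll\'ar's glueing; both justify the same containment, and your flagged caution about this step is well placed but unnecessary here.
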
 
\begin{proof}
Set $L_i:=2(K_{\bar X_i}+\bar D_i)$ for $i=1,2$. Recall that by assumption $L_i$ is an ample  line bundle with $L_i ^2=2$. 
Consider  $0\ne \sigma\in \ker \rho_1$: since $\rho$ is injective, $\rho_2(\sigma)$ is a non-zero section of $K_{\bar X_2}+\bar D_2$ that vanishes on $\bar \Gamma_2$, hence $K_{\bar X_2}+ \bar Z_2\ge 0$. Since $L_2$ is ample,  $1=L_2(K_{\bar X_2}+\bar D_2)=L_2(K_{\bar X_2}+ \bar Z_2)+L _2 \bar \Gamma_2$ 
and $\bar \Gamma_2>0$ give $L \bar \Gamma_2=1$, $L_2(K_{\bar X_2}+ \bar Z_2)=0$, therefore  $K_{\bar X_2}+ \bar Z_2=0$ and $ \bar \Gamma_2$ is irreducible.
 Note that $\bar \Gamma_2$ is the divisor of $\rho_2(\sigma)$. 
If $0\ne \tau$ is another element of $\ker \rho_1$, then the divisor of $\rho_2(\tau)$ is also equal to $\bar \Gamma_2$ and therefore, $\rho_2(\tau)$ and $\rho_2(\sigma)$ are linearly dependent. 
Since $\rho$ is injective, it follows that $\ker \rho_1$ has dimension 1. 
\end{proof}

\begin{proof}[Proof of Proposition \ref{prop: canonical section}]
\begin{proofenum}
 \item
Assume for contradiction that all sections of $K_X$ have a  2-dimensional zero locus. 
Then $X=X_1\cup X_2$ is reducible and we may assume that all sections of $K_X$ vanish on  $X_1$. By Lemma \ref{lem:  rho} we get $p_g(X)\le 1$, contradicting $\chi(X)=3$. 
 \item 
 Assume for contradiction $q(X)>0$, namely $p_g(X)\ge 3$. If $X$ is irreducible, then  $4=K^2_X+\chi(X)=h^0(2K_X)$ by    \cite[Prop. 16]{liu-rollenske14}, because $2K_X$ is Cartier. On the other hand the Hopf lemma (see \cite[p. 108]{ACGH}) gives $h^0(2K_X)\ge 2p_g(X)-1\ge 5$, a contradiction. 
 So $X=X_1\cup X_2$ is reducible. 
 
  If $\rho_i$ is injective, then $h^0(K_{\bar X_i}+\bar D_i)\ge 3$. Then the  Hopf lemma gives $h^0(2(K_{\bar X_i}+\bar D_i))\ge 2h^0(K_{\bar X_i}+\bar D_i)-1\ge 5$. Pulling back $2(K_{\bar X_i}+\bar D_i)$ to a line bundle $M$  on a desingularization $Y$ of $\bar X_i$ we obtain a contradiction to Lemma \ref{lem: aux}. So $\rho_1$ and $\rho_2$ are not injective. Identifying $H^0(K_X)$ with its image via $\rho$, we can find three independent sections of the form: $(\sigma_1,0)$, $(0,\tau_1)$, $(\sigma_2,\tau_2)$. Note that $\sigma_1$ and $\sigma_2$ are independent because $\ker \rho_2$ is $1$-dimensional by Lemma \ref{lem: rho} and by the same argument $\tau_1$ and $\tau_2$ are also independent. Now the following correspond to linearly independent sections of $2K_X$:
$$(\sigma_1^2,0), \ (\sigma_1\sigma_2,0),\  (0,\tau_1^2),\  (0,\tau_1\tau_2), \ (\sigma_2^2, \tau_2^2),$$
contradicting again $h^0(2K_X)=4$. We have proved that $q(X) = 0$. 

It remains to control the plurigenera of $X$. Since even multiples of the canonical divisor are Cartier, the Riemann--Roch formula applies, so we only need to control the odd plurigenera.

Let $C$ be a canonical curve defined by a section $x_0$. The reflexive restriction sequence \eqref{seq 1} below defines a torsion free sheaf of rank one $\kl = \omega_X|_C\refl{1}$ on $C$ with $\chi(\kl) = \chi(\ko_X) + \chi(\omega_X) = 0$. By definition we have  $\deg \kl = 1$, see \cite{hartshorne86, CFHR}.
We now twist \eqref{seq 1} with the line bundle  $\omega_X\refl{2m}$ and compute using generalised Kodaira vanishing and  the Riemann--Roch formula on singular curves
\begin{align*}
P_{2m+1} (X)& = \chi(\omega_X\refl{2m+1} )
= \chi(\omega_X\refl{2m})
 + \chi(\kl\tensor
\omega_X^{[2m]}
 |_C)
 \\
& = \chi (X) + m(2m-1) + \chi(C) + 2m+1 = \chi (X) + m(2m+1).
\end{align*}
This is the correct plurigenus for an I-surface.

 \item
  If $X$ is irreducible, then pulling back the canonical system to a desingularisation $Y$ of $X$ and considering its moving part  we see that the general canonical curve $C$ has at least a reduced component. Since $2K_XC=2$ and $2K_X$ is an ample line bundle, it follows that $C$ is reduced. Therefore $X=X_1\cup X_2$. Consider the pull-back map $\rho=\rho_1\oplus \rho_2$: if, say, $\rho_1$ is injective, then a similar argument shows that the restriction to $X_1$ of a general $C\in |K_X|$ is reduced and not contained in $\bar \Gamma_1$, hence $C$ is reduced. So it follows that $\rho_1$ and $\rho_2$ are not injective and, by Lemma \ref{lem: rho} $K_{\bar X_i}+ \bar Z_i=0$, $i=1,2$, and all canonical curves are supported on $X_1\cap X_2$. 

Conversely, assume that $X= X_1\cup X_2$ is reducible and $K_{\bar X_i}+ \bar Z_i=0$. Then $K_{\bar X_i}+\bar D_i= \bar \Gamma_i>0$ for $i=1,2$. Take $\sigma_i\in H^0(K_{\bar X_i}+\bar D_i)$  a non-zero section that vanishes on $\bar \Gamma_i$:  then $(\sigma_1,0)$ and $(0,\sigma_2)$ correspond to independent canonical sections. Since $p_g(X)=2$ by $(ii)$, these sections  generate $H^0(K_X)$ and therefore every canonical curve is supported on $X_1\cap X_2$ and is not reduced.
\qedhere
 \end{proofenum}
\end{proof}
 
\subsection{Restriction of the canonical ring to a canonical curve}
In order to apply Reid's hyperplane section principle, we need to describe the restriction of the canonical ring to a canonical curve carefully. In the end, this will be useful  only if the general canonical curve is reduced.

Let $X$ be a stable I-surface and fix $C$ a canonical curve on $X$ defined by $x_0 \in H^0(X, \omega_X)$. Noting that $\ko_X(C) = \omega_X$ is the canonical bundle we get three exact sequences
\begin{gather}
\begin{tikzcd}[ampersand replacement = \&] 0 \rar \&  \ko_X(-C) \rar{\cdot x_0} \&  \ko_X \rar \&  \ko_C \rar \&  0 \end{tikzcd},\label{seq 0}\\
\begin{tikzcd}[ampersand replacement = \&] 0 \rar \&  \ko_X \rar{\cdot x_0} \&  \omega_X \rar \&  \kl \rar \&  0 \end{tikzcd},\label{seq 1}\\
 \begin{tikzcd}[ampersand replacement = \&] 0 \rar \&  \omega_X \rar{\cdot x_0}  \&  \omega_X (C) = \omega_X\refl{2} \rar \&  \omega_C  \rar \&  0  \end{tikzcd},\label{seq 2}
\end{gather}
where we use that \eqref{seq 2} arises also by applying $\shom_{\ko_X}(-, \omega_X)$ to the restriction sequence \eqref{seq 0}, therefore identifying the third sheaf in \eqref{seq 2} as $\omega_C = \shext^1_{\ko_X}(\ko_C, \omega_X)$. 
\begin{prop}\label{prop: extra info I surfaces}
 Let $X$ be a 2-Gorenstein stable I-surface with canonical curve $C$ defined by $x_0 \in H^0(X, \omega_X)$. Then
 \begin{enumerate}
  \item The curve $C$ is a Gorenstein curve of arithmetic genus $2$ with ample canonical bundle $\omega_C$ and $H^0(C, \ko_C) = 1$.
  \item The sheaf $\kl$ is a torsion-free sheaf with $\chi(\kl) = 0$, $h^0(\kl) =1$ and the map $\mu$ defined by the diagram
  \[ 
   \begin{tikzcd}
    \omega_X \tensor_{\ko_X} \omega_X \rar \dar & \omega_X\refl{2}\dar\\
    \kl\tensor_{\ko_C} \kl \rar{\mu} & \omega_C
   \end{tikzcd}
  \]
is an isomorphism in codimension zero.
\item The multiplication on global sections induced by $\mu$, namely 
\[ H^0( \kl\tensor \omega_C^{\tensor m}) \times H^0( \kl\tensor \omega_C^{\tensor n}) \to H^0( \kl\tensor \kl \tensor \omega_{C}^{\tensor(m+n)}) \to H^0( \omega_C^{\tensor(m+n+1)}),
\]
makes the natural restriction map 
\[\phi\colon R(X, K_X) \onto \bigoplus_{n \geq 0} \left(H^0(C, \omega_C^{\otimes n}) \oplus H^0(C, \kl\otimes  \omega_C^{\otimes n})\right)=: R(C, \{ \kl, \omega_C\})\]
into a surjective  ring homomorphism with kernel generated by $x_0$. 
 \end{enumerate}
\end{prop}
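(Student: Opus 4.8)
\textbf{Proof proposal for Proposition \ref{prop: extra info I surfaces}.}

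The plan is to extract all three statements from the three exact sequences \eqref{seq 0}, \eqref{seq 1}, \eqref{seq 2} together with the numerical data of an I-surface and the 2-Gorenstein hypothesis. For (i): the curve $C$ is Cohen--Macaulay of pure dimension one (being a Cartier divisor in the Cohen--Macaulay surface $X$), and since $\omega_X$ is invertible along $C$ in codimension zero the adjunction sequence \eqref{seq 2} identifies $\omega_C$ with $\omega_X^{[2]}|_C$; since $X$ is 2-Gorenstein, $\omega_X^{[2]}$ is invertible, so $\omega_C$ is invertible and $C$ is Gorenstein. Ampleness of $\omega_C$ follows because $\omega_X^{[2]}$ restricted to $C$ is the restriction of an ample line bundle (as $2K_X$ is ample Cartier). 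The arithmetic genus is computed from \eqref{seq 0}: $\chi(\ko_C) = \chi(\ko_X) - \chi(\ko_X(-C)) = \chi(\ko_X) - \chi(\omega_X^{-1})$; using $q(X)=0$ (Proposition \ref{prop: canonical section}(ii)) and Serre duality $\chi(\omega_X^{-1}) = \chi(\omega_X^{[2]})$... more directly, $\chi(\ko_X(-C)) = \chi(\omega_X^\vee)$ and by Serre duality this equals $\chi(\omega_X^{[2]})$, which for an I-surface is $P_2(X)=\chi(X)+1=4$, hence $\chi(\ko_C)=3-4=-1$ and $p_a(C)=2$. Finally $h^0(\ko_C)=1$ because $C$ is connected: $H^0(\ko_X)\to H^0(\ko_C)$ is injective with cokernel inside $H^1(\ko_X(-C))=H^1(\omega_X^\vee)$, which is dual to $H^1(\omega_X^{[2]})$; this vanishes by generalised Kodaira vanishing (or because $C$ is ample and connected in $X$, so $h^0(\ko_C)=h^0(\ko_X)=1$).

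For (ii): the sheaf $\kl = \omega_X|_C^{[1]}$ defined by \eqref{seq 1} is torsion-free of rank one on $C$ (being a quotient of the invertible-in-codimension-zero sheaf $\omega_X$ modulo the image of $\ko_X$, which is supported on $C$ and torsion-free there by the CFHR/Hartshorne arguments cited). Taking Euler characteristics in \eqref{seq 1} gives $\chi(\kl) = \chi(\omega_X) - \chi(\ko_X) = p_g(X) - \chi(X)$... wait, $\chi(\omega_X) = \chi(\ko_X) = 3$ by Serre duality, so $\chi(\kl) = 0$. Then $h^0(\kl) = h^0(\omega_X) - 1 + \dim(\text{image of }H^1(\ko_X)\to H^1(\omega_X))$; since $q(X)=0$ we get $h^0(\kl) = p_g(X) - 1 = 1$. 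The map $\mu$ is obtained by restricting the multiplication $\omega_X\otimes\omega_X\to\omega_X^{[2]}$ to $C$ and using that \eqref{seq 1} and \eqref{seq 2} agree there; it is an isomorphism in codimension zero on $C$ because over the Gorenstein (indeed smooth) locus of $C$ all sheaves involved are line bundles and $\mu$ is the tautological identification $\kl\otimes\kl \xrightarrow{\sim}\omega_C$ coming from the fact that $\kl^{[2]}=\omega_X^{[2]}|_C=\omega_C$ there.

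For (iii): this is where I expect the real work, and it follows the standard Reid hyperplane-section template. The restriction maps $H^0(X,\omega_X^{[n]}) \to H^0(C, \omega_X^{[n]}|_C)$ are surjective for all $n$: the cokernel embeds in $H^1(X,\omega_X^{[n-2]})$ (twist \eqref{seq 0} by $\omega_X^{[n]}$, noting $\omega_X^{[n]}(-C)=\omega_X^{[n-2]}$), which vanishes for $n\geq 2$ by generalised Kodaira vanishing and for $n=0,1$ because $q(X)=0$. Splitting $\omega_X^{[n]}|_C$ according to parity via the two sequences — for $n=2k$ even one gets $\omega_C^{\otimes k}$, for $n=2k+1$ odd one gets $\kl\otimes\omega_C^{\otimes k}$ (using $\omega_X^{[2]}|_C=\omega_C$ repeatedly and $\omega_X|_C^{[1]}=\kl$) — identifies the target of the total restriction map with $R(C,\{\kl,\omega_C\})$. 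Multiplicativity of $\phi$ reduces to checking it on the generic (smooth) locus of $C$, where everything is classical. The kernel of $\phi$ in degree $n$ is, by \eqref{seq 0} twisted by $\omega_X^{[n]}$, exactly $H^0(X,\omega_X^{[n-2]})\cdot x_0$, i.e. the ideal $(x_0)$; here one uses that $H^0(X,\omega_X^{[n-2]}) \to H^0(X,\omega_X^{[n]})$ is injective (multiplication by the nonzero $x_0$ on a sheaf that is torsion-free, $X$ being $S_2$). The main obstacle is bookkeeping: carefully matching the parity-graded pieces of $\omega_X^{[n]}|_C$ with the bigraded ring $R(C,\{\kl,\omega_C\})$, and verifying that the ring structure induced by $\mu$ on global sections is the one coming from $R(X,K_X)$ — but since all identifications are canonical over the dense Gorenstein locus of $C$ and all sheaves are torsion-free, equality of sections there forces equality globally.
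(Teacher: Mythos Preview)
Your approach is essentially the one taken in the paper: extract everything from the sequences \eqref{seq 0}, \eqref{seq 1}, \eqref{seq 2} using $q(X)=0$ and generalised Kodaira vanishing. The paper's proof is much terser than yours but follows the same skeleton. That said, there are a few slips worth fixing.

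First, $C$ is \emph{not} a Cartier divisor in general: only $2K_X$ is Cartier, so $C\in|K_X|$ is merely a Weil divisor. Your argument ``$C$ is Cohen--Macaulay because it is a Cartier divisor in a CM surface'' therefore does not go through as written. The paper instead invokes the depth lemma: in \eqref{seq 0} both $\ko_X(-C)$ and $\ko_X$ are $S_2$, so $\ko_C$ is $S_1$, hence $C$ is Cohen--Macaulay; the same argument applied to \eqref{seq 1} shows $\kl$ is torsion-free.

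Second, there is a consistent index error in part (iii). Since $\ko_X(C)\cong\omega_X$, twisting \eqref{seq 0} by a line bundle $\omega_X^{[n]}$ gives $\omega_X^{[n]}(-C)=\omega_X^{[n-1]}$, not $\omega_X^{[n-2]}$; correspondingly the kernel of $\phi$ in degree $n$ is $x_0\cdot H^0(\omega_X^{[n-1]})$. The conclusion that $\ker\phi=(x_0)$ is unaffected, but the bookkeeping should be corrected.

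Third, twisting \eqref{seq 0} by $\omega_X^{[n]}$ for \emph{odd} $n$ is delicate, since $\omega_X^{[n]}$ is then not locally free and tensoring need not preserve exactness (and $\omega_X^{[n]}|_C$ may acquire torsion). The paper sidesteps this by twisting only with the line bundles $\omega_X^{[2k]}$: use \eqref{seq 2}$\otimes\omega_X^{[2k-2]}$ for even degree $2k$ and \eqref{seq 1}$\otimes\omega_X^{[2k]}$ for odd degree $2k+1$. You essentially say this when you ``split according to parity'', but your earlier sentence about twisting \eqref{seq 0} by arbitrary $\omega_X^{[n]}$ should be replaced by this cleaner formulation.
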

In the notation of \cite{CFPR23a} the pair $(\kl, \mu)$ is a ggs (generalised Gorenstein spin) structure on $C$ and 
 $R(C, \{ \kl, \omega_C\})$ is the associated half-canonical ring.
 
 \begin{proof}
By the depth-Lemma \cite[Cor. 18.6]{Eisenbud}   and the fact that $\omega_X\refl{m}$ is $S_2$ for every $m$, the sheaf $\kl_i$ is the quotient of $\omega_X\tensor_{\ko_X}\ko_C$ by the torsion submodule. 

As $\omega_X\refl{2} $ is locally free, its restriction to $C$ has no torsion, so by the same argument $\omega_C$ is locally free and of degree $2 = 2K_X^2$. 

The map $\mu$ exists because every torsion sheaf maps to zero in the line bundle $\omega_C$. Outside the codimension two subset of $X$  where $\omega_X$ is not locally free, that is, outside a finite number of points, the map $\mu$ is an isomorphism. 

Now the existence of the ring structure on $R(C, \{\kl, \omega_C\}) $ and the ring homomorphism $\phi$ is clear by construction. 

All claims related to the dimension of cohomology groups on $C$, respectively the surjectivity of $\phi$, follow from the long exact cohomology sequences of \eqref{seq 0}, \eqref{seq 1}, \eqref{seq 2}, possibly twisted with $\omega_X\refl{2k}$,  using $q(X) = 0 $ by Proposition \ref{prop: canonical section} and generalised Kodaira vanishing \cite[Prop. 21]{liu-rollenske14}). 
 \end{proof}

 \begin{rem}
  It is only slightly more tedious to work out the nature of the restriction of a section ring of any $\IQ$-Cartier divisor to a curve in a similar fashion, but we don't need this here. 
 \end{rem}

 In the case where the canonical curve is reduced, we can use the results of  \cite{CFPR23a} to describe the restricted canonical ring $R(C, \{\kl, \omega_C\})$.
 
 \begin{thm} \label{thm: types and rings}
 Let $(C, \kl, \mu)$ be as in Proposition \ref{prop: extra info I surfaces}, that is, $C$ is a Gorenstein curve of arithmetic genus two with ample canonical bundle and $(\kl, \mu)$ is a ggs structure on $C$ with $h^0(C, \kl) = 1$. If $C$ is reduced, then the following two cases are possible
  \begin{description}[font = \normalfont, nosep]
  \item[type A] 
The curve   $C$ is a flat double cover of $\IP^1$, so in particular  it is either  irreducible or the union of  two smooth rational curves. The half-canonical ring is
\[ R(C, \{ \kl, \omega_C\}) = \IC[x,y,z]/( z^2 - f_{10}(x,y)),\] 
where $\deg(x,y,z) =(1,2,5)$ and $f_{10}\neq 0$. 
    \item[type B] 
The curve   $C$ is the union of two irreducible curves $C_i$  with $p_a(C_i)=1$, $i=1,2$, that meet transversely at a single  point $p$ that is smooth for both.
The half-canonical ring is
\[R(C, \{ \kl, \omega_C\}) = \IC[x,y,w,v,z,u] / I, \]
with   $\deg(x,y,w,v,z,v) =(1,2,3,4,5,6)$ and the ideal $I$ is  generated by the following  equations
 \[
 \rk \begin{pmatrix}0&y&w&z\\x&w&v&u\end{pmatrix}\leq 1, \begin{array}{rcl}
z^2 & = & yg_8(y,v) \\
zu & = & wg_8(y,v) \\
u^2 & = & vg_8(y,v) + x^4h_8(x,v)
\end{array}  
\]
  where $v^2$ appears in $g_8$ with non-zero coefficient. 
  \end{description}
 \end{thm}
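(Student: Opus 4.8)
The plan is to distinguish the two cases by the reducedness structure of $C$, then in each case compute enough of the half-canonical ring $R(C,\{\kl,\omega_C\})$ using the classification of genus-two Gorenstein curves together with the general theory of ggs structures from \cite{CFPR23a}. Recall that $C$ is a Gorenstein curve with $p_a(C)=2$, $\omega_C$ ample, $H^0(\ko_C)=\IC$, and that $(\kl,\mu)$ is a ggs structure with $\deg\kl=1$ and $h^0(\kl)=1$. Since $C$ is reduced and connected with $h^0(\ko_C)=1$, and has ample dualising sheaf, I would first recall the short list of such curves: the linear system $|\omega_C|$ has degree $2$ and $h^0(\omega_C)=2$, so it defines a degree-two map $C\to\IP^1$ exactly when $\omega_C$ is base-point free, and otherwise $\omega_C$ has a base point. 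The base-point free case forces $C$ to be either irreducible (hyperelliptic, i.e.\ a flat double cover of $\IP^1$) or a union of two smooth rational curves meeting in two points or one rational curve nodal — here one has to check which of these carry a ggs structure with $h^0(\kl)=1$; this is the \textbf{type A} list. The base-point-free analysis should also rule out degenerate configurations by the ggs constraint. The remaining case, where $\omega_C$ is not base-point free, is the one producing a \emph{reducible} curve with two genus-one components glued at a single smooth transverse point: this is \textbf{type B}, and here the splitting $C=C_1\cup C_2$ with $\omega_C|_{C_i}=\omega_{C_i}(p)$ comes from the standard Mayer--Vietoris / conductor analysis of Gorenstein curves.

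For \textbf{type A}, the computation of the ring is essentially classical. One shows $x,y\in H^0(\omega_C)$ give the degree-two map to $\IP^1$, that $\kl$ and $\kl\tensor\omega_C$ contribute nothing new in low degree since $h^0(\kl)=1$, that $\kl\tensor\omega_C^{\tensor 2}$ provides a degree-$5$ generator $z$ (using $\chi$ and vanishing to get $h^0=1$ here), and that $z^2\in H^0(\omega_C^{\tensor 5})=H^0(\ko_{\IP^1}(5))$ lands in the image of $\Sym^5$ of the degree-$1$ part, giving the single relation $z^2=f_{10}(x,y)$; the fact $f_{10}\neq 0$ follows because otherwise $z$ would be a zero-divisor, contradicting that $R(C,\{\kl,\omega_C\})$ is the half-canonical ring of a ggs structure (equivalently, $\mu$ is an isomorphism in codimension zero). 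A Hilbert-series count $\frac{1-t^{10}}{(1-t)^2(1-t^2)(1-t^5)}$ confirms there are no further generators or relations.

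For \textbf{type B} the work is to produce the six generators in degrees $1,2,3,4,5,6$ and the listed relations. I would build the ring from the two "halves": $\omega_C|_{C_i}=\omega_{C_i}(p)$ is a degree-$2$ ample bundle on a genus-one curve, and the ggs structure restricts to ggs data on each $C_i$, so the rings $R(C_i,\dots)$ are understood; the ring on $C$ is a fibre product over the value at $p$. Concretely: $x$ (degree $1$) vanishing on $C_2$, $y$ (degree $2$) generating $H^0(\omega_C)$-related sections, then $w,z$ (degrees $3,5$) coming from $\kl\tensor\omega_C$ and $\kl\tensor\omega_C^{\tensor 2}$, and the "missing" generators $v,u$ in degrees $4,6$ forced by the reducibility (the section ring is not generated in degrees $\le 5$ because one needs to separate behaviour on $C_1$ from $C_2$). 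The rank-$\le 1$ condition on the $2\times 3$ matrix $\begin{pmatrix}0&y&w&z\\x&w&v&u\end{pmatrix}$ encodes the multiplicative relations among $x,y,w,v,z,u$, and the three explicit quadrics $z^2=yg_8,\ zu=wg_8,\ u^2=vg_8+x^4h_8$ encode the degree-$10$, $11$, $12$ relations; the condition that $v^2$ occurs in $g_8$ with nonzero coefficient again reflects that $\mu$ is generically an isomorphism, i.e.\ $z,u$ are not zero-divisors on the component $C_1$. I would verify the presentation by computing the Hilbert series and matching it to that of $R(C,\{\kl,\omega_C\})$ (which is the canonical Hilbert series of an I-surface divided by $(1-t)$), and by checking that the scheme cut out is exactly $C$ with the right splitting.

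The main obstacle I expect is the \textbf{type B} ring presentation: getting the precise normal form of the relations (rather than just the abstract existence of generators in degrees $1$ through $6$) requires careful bookkeeping of the fibre-product structure over the node $p$ and of the base-point of $\omega_C$, and one must justify that no further generators or relations appear — this is where the explicit results of \cite{CFPR23a} on half-canonical rings of reduced genus-two curves must be invoked essentially as a black box rather than re-derived. Separating type A from type B, by contrast, is a relatively clean case analysis once one knows that a reduced Gorenstein genus-two curve with $\omega_C$ ample has only the handful of configurations above and that the ggs constraint $h^0(\kl)=1$ with $\mu$ generically an isomorphism eliminates the rest.
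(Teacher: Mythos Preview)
The paper's own proof is a two-line citation: the case distinction is Proposition~3.3 and Corollary~3.9 of \cite{CFPR23a}, and the ring presentations are read off as cases $A(1)$ and $B(1)$ of Theorem~5.2 there. No argument is given in the present paper.

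Your proposal instead tries to unpack what actually happens in \cite{CFPR23a}: you separate the cases by base-point-freeness of $|\omega_C|$, sketch the hypersurface computation for type~A, and outline the fibre-product approach over the node for type~B. This is a reasonable summary of the strategy in the companion paper, and in the end you also defer the delicate type-B normal form to \cite{CFPR23a} as a black box, so the two arguments converge.

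A few slips to tighten: in type~A, $x$ is the generator of $H^0(\kl)$ in degree~$1$, not of $H^0(\omega_C)$; the degree-$2$ piece $H^0(\omega_C)$ is spanned by $x^2$ and the new generator $y$. Likewise $h^0(\kl\otimes\omega_C^{\otimes 2})=4$, not $1$; presumably you mean that one \emph{new} generator $z$ is needed beyond the products. More substantively, your dichotomy ``$\omega_C$ base-point-free $\Leftrightarrow$ type~A'' is correct but needs a line of justification on the non-bpf side: one must check that a reduced Gorenstein genus-two curve with ample $\omega_C$ and a canonical base point is forced to be two arithmetic-genus-one components meeting transversally at a single smooth point, and that the remaining reducible configurations (e.g.\ a rational and an elliptic component meeting in two points) are excluded by the ggs constraint $h^0(\kl)=1$. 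This is exactly the content of the cited Proposition~3.3 and Corollary~3.9, so your final appeal to \cite{CFPR23a} covers it.
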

\begin{proof}
 The distinction and description of the cases comes from Proposition 3.3 and Corollary 3.9 in \cite{CFPR23a}, while the half-canonical rings are the cases $A(1)$ and $B(1)$ of \cite[Theorem 5.2]{CFPR23a}.
\end{proof}

\section{Surfaces with reduced canonical curve of type A}\label{sec: typeA}

It turns out that these are well known to us. 
\begin{thm}\label{thm: surface-A}
 Let $X$ be a $2$-Gorenstein $I$-surface containing a  reduced canonical curve $C$  of  type $A$ and let 
 \[ \gothD_A = \overline{ \left\{ [X]\in \overline\gothM_{1,3} \left| \,  \text{\begin{minipage}[c]{7cm}
 $X$ is 2-Gorenstein but not Gorenstein with reduced canonical curve of type A       
                                                                          \end{minipage}
 }
 \right.\right\} }\]
 where the closure is taken in $\overline\gothM_{1,3}$.
 \begin{enumerate}
 \item The surface $X$ is canonically embedded as a hypersurface of degree $10$ in $\IP(1,1,2,5)$
  not passing through $(0:0:0:1)$ and  $X$ is $\IQ$-Gorenstein smoothable. It is Gorenstein if and only if $X$ does not contain the point $(0:0:1:0)$.

  Conversely, any such hypersurface with slc singularities is a stable I-surface.
  
  \item The set $\gothD_A$ is an irreducible divisor in the closure of the Gieseker component. 
  It coincides  with the closure of the divisor of surfaces with one singularity of type $\frac{1}{4}(1,1)$   considered in \cite{FPRR22, CFPRR23}. 
    \end{enumerate}
    \end{thm}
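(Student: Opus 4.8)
The plan is to start from Theorem \ref{thm: types and rings}, which says that for a $2$-Gorenstein I-surface $X$ containing a reduced canonical curve $C$ of type A, the restricted half-canonical ring is $R(C,\{\kl,\omega_C\}) = \IC[x,y,z]/(z^2 - f_{10}(x,y))$ with $\deg(x,y,z)=(1,2,5)$ and $f_{10}\neq 0$. By Proposition \ref{prop: extra info I surfaces}, the restriction map $\phi\colon R(X,K_X)\onto R(C,\{\kl,\omega_C\})$ is surjective with kernel generated by $x_0$. Applying Reid's hyperplane section principle \cite{reid90}, one lifts the generators $x,y,z$ to elements of $R(X,K_X)$ in degrees $1,2,5$ (together with $x_0$ in degree $1$) and lifts the single relation $z^2-f_{10}(x,y)$ to a degree-$10$ relation in the four variables. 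Since the Hilbert series of $R(X,K_X)$ is $\frac{1-t^{10}}{(1-t)^2(1-t^2)(1-t^5)}$ by Definition \ref{def: I surface} (and Proposition \ref{prop: canonical section}(ii)), the canonical ring is exactly $\IC[x_0,x_1,x_2,x_5]$ modulo one equation of degree $10$, i.e.\ $X$ is a hypersurface of degree $10$ in $\IP(1,1,2,5)$; this is the content of the first sentence of (i).

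Next I would pin down the position of $X$ relative to the coordinate points. Because $f_{10}\neq 0$, the defining equation of $X$ actually involves the variable $x_5$, so it can be written (after absorbing the $x_5$-linear term, which vanishes as $x_5$ has odd degree and $10$ is even — so the equation is $x_5^2 = f_{10}(x_0,x_1,x_2)$ up to coordinate change) as $x_5^2 - g_{10}(x_0,x_1,x_2)$ with $g_{10}\neq 0$; hence $(0:0:0:1)\notin X$. The Gorenstein locus is governed by whether $X$ avoids the singular locus of $\IP(1,1,2,5)$, which is the single point $(0:0:1:0)$ (a $\frac12(1,1)$ quotient point): $X$ is Gorenstein exactly when $(0:0:1:0)\notin X$, i.e.\ when the pure power $x_2^5$ appears in $g_{10}$ with nonzero coefficient; otherwise $X$ has a $\frac14(1,1)$ singularity there. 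This matches the "one singularity of type $\frac14(1,1)$" description. For the converse, I would check directly that any hypersurface $X_{10}\subset\IP(1,1,2,5)$ avoiding $(0:0:0:1)$, with at worst slc singularities, has $\omega_X = \ko_X(1)$ by adjunction (so $2K_X$ is Cartier since $\ko_{\IP}(2)$ is), $K_X^2 = \frac{10\cdot 2}{1\cdot1\cdot2\cdot5}=1$, and the right $\chi$ and Hilbert series by the standard weighted-hypersurface computation. $\IQ$-Gorenstein smoothability follows because the generic such hypersurface is the classical smooth I-surface and one can deform within the linear system of degree-$10$ hypersurfaces.

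For part (ii): $\gothD_A$ is a closed subset of $\overline\gothM_{1,3}$ by definition. Its general member is a hypersurface in $\IP(1,1,2,5)$ with exactly one $\frac14(1,1)$ point and no worse singularities; such surfaces are parametrised (modulo the automorphisms of $\IP(1,1,2,5)$ and the choice of which coordinate point plays the role of $(0:0:1:0)$) by an irreducible family, and a dimension count — $\dim|\ko_{\IP(1,1,2,5)}(10)|$ minus $\dim\Aut\IP(1,1,2,5)$, minus one for the codimension of the condition "passes through the singular point" — gives $28 - 1 = 27$, matching Table \ref{tab: strata}. Irreducibility of $\gothD_A$ then follows because this open dense family is irreducible, and it lies in $\overline\gothM^{\text{class}}$ by $\IQ$-Gorenstein smoothability from (i). Finally the identification with the $\frac14(1,1)$-divisor of \cite{FPRR22, CFPRR23} is a matter of recognising that the general surface described there is precisely this degree-$10$ hypersurface through $(0:0:1:0)$, so the two closed irreducible subsets share a dense open subset and hence coincide.

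The main obstacle I expect is the careful execution of Reid's hyperplane section principle: one must verify that lifting the generators and the single relation really does produce the full canonical ring with no extra generators or relations in higher degree, which uses the precise Hilbert series together with the surjectivity and kernel statements of Proposition \ref{prop: extra info I surfaces} — and one must also handle the slc condition on $X$ itself (as opposed to on $C$) and confirm that reducedness of the general canonical curve is not lost. The converse direction and the dimension count are comparatively routine weighted-projective bookkeeping.
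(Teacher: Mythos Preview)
Your approach matches the paper's: invoke the hyperplane section principle on the type-A half-canonical ring of Theorem \ref{thm: types and rings}, then use adjunction and the references \cite{FPR17a, FPRR22} for the Gorenstein criterion and the $\tfrac14(1,1)$ identification, with irreducibility and the dimension count coming from the linear system of degree-$10$ hypersurfaces through $(0{:}0{:}1{:}0)$. A few local justifications need tidying---the $x_5$-linear term is removed by completing the square rather than by a parity argument (degree-$5$ monomials in $x_0,x_1,x_2$ do exist), the fact that $(0{:}0{:}0{:}1)\notin X$ follows from the presence of the $x_5^2$ term rather than from $f_{10}\neq 0$, and the singular locus of $\IP(1,1,2,5)$ consists of two points, not one (though $X$ avoids $(0{:}0{:}0{:}1)$ anyway)---but the structure of the argument is correct.
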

\begin{proof}
\begin{proofenum}
\item  The description as a hypersurface follows  directly via  the 
 hyperplane section principle \cite{reid90} from the description of the ring $R(C, \{ \kl, \omega_C\})$  given in Theorem  \ref{thm: types and rings} $(i)$. The ``converse'' statement follows by using adjunction for hypersurfaces in  $\IP(1,1,2,5)$, while the criterion for $X$ being Gorenstein was proved in  \cite[Theorem 3.3, Proposition 4.1]{FPR17a} (see also \cite[Remark  3.2]{CFPRR23}). 
 
 \item The set  $\gothD_A$ is the closure of the image of an open subset of the linear system of hypersurfaces of degree $10$ containing the point $(0:0:1:0)$ by $(i)$ and as such an irreducible divisor. Its general element has a unique singular point of type $\frac 14(1,1)$ by the discussion in \cite[Section 3.A]{FPRR22}.\qedhere
 \end{proofenum}
 \end{proof}

\begin{example}
If we  take $X$ to be the hypersurface in $\IP(1_{x_1},1_{x_2},2_y,5_z)$ defined by the equation  $z^2-x_1x_2f_4(x_1,x_2,y)^2=0$, with $f_4$ a general polynomial of degree 4, then $X$ is irreducible with normalization $\pp(1,1,4)$, and the canonical system has only one base  point, occurring above the vertex of the quadric cone.  Nevertheless, all the canonical curves are reducible. 
\end{example}

\section{Surfaces with reduced canonical curve of type B}\label{sec: typeB}
In this section we describe 2-Gorenstein I-surface with reduced canonical curve of type B (in the notation of Theorem \ref{thm: types and rings}). 
\begin{thm} \label{thm: all type B}
Let $X$ be a $2$-Gorenstein I-surface with reduced canonical curve of type B. Then the canonical ring of $X$ is as in Proposition \ref{prop: algebraic type B} and if $X$ is general, it is glued from an Enriques surface and a K3 surface as in Proposition \ref{prop: geometry B}.

The closure of the set of such surfaces, 
\[ \gothD_B = \overline{ \left\{ [X]\in \overline\gothM_{1,3} \left| \,  \text{\begin{minipage}[c]{5cm}
 $X$ 2-Gorenstein with reduced canonical curve of type B       
\end{minipage}
 }
 \right.\right\} } \subset \overline{\gothM}_{1,3}.\]
 is an irreducible divisor in the closure of the Gieseker moduli space.

 The intersection of divisors  $\gothD_A \cap \gothD_B$ contains the irreducible codimension two  subset 
 \[\gothR_{AB}  = \{ [X] \in \gothD_B \mid \text{ $\tilde g_8$ does not contain $y^4$ in \eqref{eq: B2}}\}.\]
 \end{thm}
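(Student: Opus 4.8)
The plan is to treat the three assertions of Theorem \ref{thm: all type B} in turn, using the half-canonical ring of type B from Theorem \ref{thm: types and rings}$(ii)$ as the starting point. First I would feed the presentation of $R(C,\{\kl,\omega_C\})$ into Reid's hyperplane section principle \cite{reid90}: the generators $x,y,w,v,z,u$ of weights $(1,2,3,4,5,6)$ and the relations (the rank condition on the $2\times 4$ matrix together with the three quadratic equations $z^2=yg_8$, $zu=wg_8$, $u^2=vg_8+x^4h_8$) must lift to generators and relations of $R(X,K_X)$, with $x_0$ the extra weight-one variable. Since $p_g(X)=2$ there are exactly two degree-one generators $x_0,x_1$, and the lift of $x$ is $x_1$; the remaining generators $y,\dots,u$ acquire general lifts whose leading terms (modulo $x_0$) are prescribed, and the relations lift with the freedom of adding $x_0$-multiples of lower-weight expressions. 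This produces the canonical ring of $X$ displayed in Proposition \ref{prop: algebraic type B}, which I would cite as the precise output; the key equation for the later statement is the lift \eqref{eq: B2} of $z^2=yg_8$, where $\tilde g_8$ is the lifted octic.

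Second, for the geometric description I would analyse the two components of $X$. The kernel–image analysis of the pull-back map $\rho=\rho_1\oplus\rho_2$ (Lemma \ref{lem: rho}) together with the type B splitting $C=C_1\cup C_2$ with $p_a(C_i)=1$ shows that $X=X_1\cup X_2$ with $C_i\subset X_i$ a canonical curve on each component; computing $K_{X_i}^2$, $\chi$ and the nature of $\bar D_i$ from the adjunction-type formulas in Section \ref{notation} identifies $X_1$ as an Enriques surface and $X_2$ as a K3, each with the prescribed singular curve, and conversely Koll\'ar's glueing \cite[Thm.~5.13]{KollarSMMP} along the identified conductor recovers $X$. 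This is Proposition \ref{prop: geometry B}. Smoothability, hence membership in $\overline{\gothM}_{1,3}^{\text{class}}$, follows because $X$ is a flat limit of canonically embedded I-surfaces: the general member of the family of canonical rings just constructed deforms to a Gorenstein I-surface (the parameter $\tilde g_8$ can be deformed so that the relations become those of a type A hypersurface), which lies in the Gieseker component by \cite{FPR17a}. Counting the moduli: the family of type B canonical rings depends on $27$ parameters, so $\gothD_B$ is an irreducible divisor in $\overline{\gothM}_{1,3}^{\text{class}}$ (which has dimension $28$).

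Third, for the incidence statement, the condition $[X]\in\gothD_B$ defines $\tilde g_8$, and the sublocus where $\tilde g_8$ does not involve the monomial $y^4$ is evidently an irreducible divisor inside $\gothD_B$, hence a codimension-two irreducible subset $\gothR_{AB}$ of $\overline{\gothM}_{1,3}$. The point is that when the coefficient of $y^4$ in $\tilde g_8$ vanishes one can eliminate the generators $w,v,u$: the rank condition expresses $w,v,u$ in terms of the remaining variables and the equation \eqref{eq: B2} then degenerates precisely to a degree-$10$ relation of the shape $z^2=f_{10}(x_0,x_1,y)$ on $\IP(1,1,2,5)$, i.e.\ a surface of type A (passing through $(0:0:1:0)$, as befits the non-Gorenstein locus of $\gothD_A$). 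Thus $\gothR_{AB}\subseteq\gothD_A\cap\gothD_B$, and it is codimension two because it is a hyperplane condition on the $27$-parameter family.

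The main obstacle I expect is the second step: carrying out the lift of the type B half-canonical ring to the full canonical ring of $X$ and, above all, pinning down the geometry of the two components precisely enough to recognise the Enriques/K3 pair and to verify that the glueing data match. The algebra of lifting relations in weights up to $12$ is bookkeeping-heavy (one must check that no new generators or relations are forced and that the $x_0$-ambiguities can be normalised), and translating ``$K_{\bar X_i}+\bar Z_i=\bar\Gamma_i$'' plus the numerical data into the statement ``$X_1$ is an Enriques surface with a specific singular anticanonical curve, $X_2$ is a K3 with a specific curve'' requires a careful case analysis of the possible singularities of the $X_i$ along $\bar D_i$; everything else (irreducibility, dimension count, the degeneration to type A) is then comparatively routine.
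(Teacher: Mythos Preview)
Your first step---lifting the type B half-canonical ring via the hyperplane section principle---matches the paper's Proposition \ref{prop: algebraic type B} and is the right approach.

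There is, however, a genuine gap in your treatment of smoothability and of $\gothR_{AB}$. You write that ``the parameter $\tilde g_8$ can be deformed so that the relations become those of a type A hypersurface'' and that ``when the coefficient of $y^4$ in $\tilde g_8$ vanishes one can eliminate the generators $w,v,u$.'' Neither is correct. With the matrix $\left(\begin{smallmatrix}0&y&w&z\\x&w&v&u\end{smallmatrix}\right)$ fixed, varying $\tilde g_8$ (or $\tilde k_{10}$) keeps you inside the type B format: the surface remains reducible because $xy=xw=xz=0$ stay in the ideal, and there is no way to solve for $w,v,u$ from the rank-one minors when the top-left entry is zero. The mechanism the paper uses (Proposition \ref{prop: locating divisor B}) is to deform that zero entry itself: replace the matrix by $\left(\begin{smallmatrix}\lambda&y&w&z\\x&w&v&u\end{smallmatrix}\right)$ over $\IA^1_\lambda$ and simultaneously add $\lambda$-terms to the three quadratic relations so that the syzygies still close up. For $\lambda\neq 0$ the rank condition then gives $w=\lambda^{-1}xy$, $v=\lambda^{-2}x^2y$, $u=\lambda^{-1}xz$, collapsing everything to a single degree-$10$ equation $z^2=y\tilde g_8+\lambda^2\tilde k_{10}$ in $\IP(1,1,2,5)$. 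This is the $\IQ$-Gorenstein smoothing, and it is this hypersurface---not the central fibre---whose passage through $(0{:}0{:}1{:}0)$ is governed by the $y^4$-coefficient of $\tilde g_8$. So $\gothR_{AB}\subset\gothD_A$ is proved by showing the \emph{nearby} fibres are type A, not by rewriting the type B surface itself as a hypersurface.

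A secondary point: your geometric identification of the components via Lemma \ref{lem: rho} is shakier than you suggest. That lemma requires $\rho_i$ to be non-injective, which by Proposition \ref{prop: canonical section}(iii) is tied to the \emph{non-reduced} canonical curve case---not obviously to type B. The paper instead reads reducibility directly off the equations ($xy=xw=xz=0$ in the ideal) and then identifies the Enriques component by explicitly constructing its K3 double cover inside $\IP(1,1,2,4)$ (Corollary \ref{cor: B decomposition}). Your route might be salvageable, but it is not the one you should regard as ``comparatively routine.''
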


 We will start with the geometric description, via which the surfaces were initially found, and then prove Theorem 
 \ref{thm: all type B} in Proposition \ref{prop: algebraic type B}, Corollary \ref{cor: B decomposition} and Proposition \ref{prop: locating divisor B}.
 
\subsection{A geometric construction}\label{sect: B geometry}
We will start with  a geometric description of 2-Gorenstein stable surfaces with reduced canonical curve of type B. Let us fix some notation: we denote by $\IF_n$ the Hirzebruch surface with negative section $s_\infty $ of square $s_\infty^2 = -n$. We call a  section $s_0$ disjoint from $s_\infty $ a positive section.

\begin{construction}\label{constr: X1}
 Let $Y_1$ be an Enriques surface containing a half-pencil $E_1$, that is, an elliptic curve $E_1$ such that $|2E_1|$ is an elliptic pencil. Assume further that $Y_1$ contains a $(-2)$-curve which is  a bisection $\Lambda_1$ of $|2E_1|$. 

 Contracting $\Lambda_1$ to an $A_1$ point $p$, we get a singular Enriques surface $X_1$ containing an elliptic curve $E_1$ through the point $p$ with $E_1^2 = \frac 12$.

 The K3 cover $T_1 \to Y_1$ is an elliptic K3 surface and  contains two disjoint $(-2)$-curves $G_1, G_2$, which realise $T_1$ as a double cover of $\IF_2$ branched over a (symmetric) divisor $B_1\in |4s_0|$ and one can construct explicit examples in this way. 
 
Such Enriques surfaces have been studied in \cite[Ch.4, \S7]{Dolgachev-Enriques} where it is  shown that the so-called superelliptic linear system  $|4E_1+2\Lambda_1|$ on $Y_1$ realises $X_1$ as the double cover of a degenerate symmetric del Pezzo surface $W_1'$ of degree 4 in $\pp^4$. 

One can  check that $W_1'\isom W_1 = \{ w^2-yv =0 \} \subset \pp(1_{x_0},2_y, 3_w,4_v)$ fitting in the following commutative diagram:
\begin{equation}\label{Enriques diagram} \begin{tikzcd}[column sep = large]
 && \IF_2\dar\\ 
    T_1 
    \arrow[out = 45, in = 180]{urr}{2:1 \text{ branched}}[swap]{\text{over } B_1\in |4s_0|}
    \rar{\text{contract}}[swap]{G_1, G_2}\dar{/\iota}[swap]{\text{K3 cover}} & S_1 \rar{2:1}\dar{/\iota} & \pp(1,1,2)\dar{/\iota} \\
     Y_1 \rar{\text{contract}}[swap]{\Lambda_1} \arrow[out = -45, in = 180]{drr}[swap]{|4E_1+2\Lambda_1|} & X_1 \rar{2:1} & W_1 \rar[hookrightarrow]\dar{\isom} & \IP(1,2,3,4)\dar{|\ko(4)|}\\
     &&W_1'\rar[hookrightarrow] &\IP^4
    \end{tikzcd}.
\end{equation}

Finally, we count the number of moduli of this construction.
The K3 cover $S_1$ of $X_1$ is a hypersurface of degree $8$ in $\pp(1_{a_0},1_{a_1},2_b,4_c)$  with  equation 
$c^2=g_8(a_0,a_1^2,a_1b,b^2)$. In addition, $g_8$ is invariant under the involution $\iota$ 
of $\pp(1,1,2,4)$ defined by $(a_0,a_1,b,c)\mapsto(a_0,-a_1,-b,-c)$ (cf. \cite[Thm.~VIII.18.2]{BHPV}). An explicit computation shows that $g_8$ varies in a linear system of dimension 12. Since the subgroup of automorphisms of $\pp(1,1,2,4)$ that commute with $\iota$ has dimension 3, we get $9$ parameters for $(X_1, E_1)$. 
\end{construction}

\begin{construction}\label{constr: X2}
Let $B_2
\subset \IF_4$ be a general divisor in $|s_\infty + 3s_0| = s_\infty + |3s_0|$ and let $Y_2$ be the double cover branched over $B_2$. By the Hurwitz formula $Y_2$ is a smooth K3 surface with an elliptic fibration induced by the fibration on $\IF_4$. 

The preimage of $s_\infty$ is a $(-2)$-curve $\Lambda_2\subset Y_2$ and contracting it we get  a singular K3 surface $X_2$ with one $A_1$ singularity at a point $p$. It can be viewed as a double cover of $\IP(1,1,4)$ or as a hypersurface of degree $12$ in $\IP(1,1,4,6)$, resulting in a diagram
\[
 \begin{tikzcd}[column sep = large]
  Y_2 \rar{\text{contract}}[swap]{\Lambda_2}\dar{2:1} & X_2 \dar{2:1} \rar[hookrightarrow] & \IP(1,1,4,6)\arrow[dashed]{dl}\\
  \IF_4 \rar {\text{contract}}[swap]{s_\infty}& \IP(1,1,4).
 \end{tikzcd}
\]
The image of any fibre of the elliptic fibration is a curve $E_2$ of arithmetic genus one in $X_2$ containing the point $p$ and satisfying $E_2^2 = \frac 12$. Note that a general $Y_2$ is not isotrivial, so every  isomorphism class of elliptic curves occurs a finite number of times as a fibre.

To count parameters for $Y_2$, we observe that the linear system $|\sigma_{\infty}+3\sigma_0|$ on $\IF_4$ has dimension 27, while the automorphism group of $\IF_4$ has dimension $9$, so we get $18$ parameters.
\end{construction}
\begin{prop}\label{prop: geometry B}
Let $(X_1, E_1)$ be as in Construction 
\ref{constr: X1} and $(X_2, E_2)$ be as in Construction \ref{constr: X2} with $E_1\isom E_2$ a smooth elliptic curve. Then the two pairs can be glued to a 2-Gorenstein stable  I-surface $X = X_1\cup_{E_1\isom E_2}X_2$ such that the general canonical curve is of type B.

The construction depends on $27$ parameters.
\end{prop}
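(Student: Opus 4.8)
The plan is to prove Proposition \ref{prop: geometry B} in two logically separate pieces: first, that the gluing can be carried out and yields a $2$-Gorenstein stable surface with the stated invariants whose general canonical curve is of type B; second, the parameter count.

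For the gluing, I would proceed as follows. The pairs $(X_i, E_i)$ of Constructions \ref{constr: X1} and \ref{constr: X2} come with an elliptic curve $E_i$ passing through an $A_1$-point $p_i$, with $E_i^2 = \tfrac12$ on $X_i$; pulling back to the minimal resolution $Y_i$, $E_i$ becomes a smooth elliptic curve disjoint from the exceptional $(-2)$-curve, so $E_i$ is a Cartier divisor on $X_i$ near which $X_i$ is smooth, and $\bar D_i := E_i$ serves as the conductor. Since $X_1$ is Enriques (so $K_{X_1} = 0$ in $\Cl$, but $\omega_{X_1}$ is $2$-torsion) and $X_2$ is K3 ($\omega_{X_2} = \ko_{X_2}$), adjunction gives $\omega_{X_i}(E_i)|_{E_i} \cong \omega_{E_i}$, so the two restrictions agree (both are trivial on the elliptic curve $E_1 \cong E_2$); this is exactly the compatibility needed to apply Koll\'ar's gluing \cite[Thm.~5.13]{KollarSMMP} with the identity involution on $E_1 \cong E_2$, producing $X = X_1 \cup_{E_1 \cong E_2} X_2$ with normalization $(\bar X_1,\bar D_1) \sqcup (\bar X_2, \bar D_2) = (X_1, E_1) \sqcup (X_2, E_2)$. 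One then checks $X$ is stable: it is slc by construction (the only non-normal locus is the nodal gluing along $E_i$, and the $X_i$ have only $A_1$ singularities), and $\omega_X$ is ample because $\omega_X^{[2]}$ restricts to $2E_i = \omega_{X_i}^{[2]}(2E_i)$, an ample Cartier divisor on each $X_i$ (here $E_i^2 = \tfrac12 > 0$ and each $E_i$ meets the exceptional curve). The invariants follow from $K_X^2 = (K_{\bar X_1}+\bar D_1)^2 + (K_{\bar X_2}+\bar D_2)^2 = \tfrac12 + \tfrac12 = 1$ and $\chi(X) = \chi(\bar X_1) + \chi(\bar X_2) + \chi(D) - \chi(\bar D) = 1 + 2 + 0 - 0 = 3$ (the last since $D = E_1 = E_2$ is elliptic so $\chi(D) = \chi(\bar D) = 0$), and $2$-Gorensteinness is immediate since $\omega_X^{[2]}$ is the gluing of the Cartier bundles $\omega_{X_i}^{[2]}$ which agree ($= \ko$) on $E_i$. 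By Proposition \ref{prop: canonical section} this is automatically an I-surface. Finally, to see the general canonical curve is of type B: the half-pencil section of $\omega_{X_1}(E_1) = \ko_{X_1}(E_1)$ vanishing on $E_1$ and the section of $\omega_{X_2}(E_2) = \ko_{X_2}(E_2)$ vanishing on $E_2$ pull back $C_i \in |E_i|$ a general member, which for $i=1$ is a smooth elliptic curve in the half-pencil $|2E_1|$... more precisely the canonical curve is $C = C_1 \cup_p C_2$ where $C_i \in |E_i|$ is a general divisor through the gluing point, each of arithmetic genus one, meeting transversally at the single point where $E_1, E_2$ are glued — precisely the shape of a type B curve in Theorem \ref{thm: types and rings}. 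I would verify this last point by computing $h^0(\omega_{X_i}(E_i)) = h^0(\ko_{X_i}(E_i))$ via Riemann--Roch on each surface and checking the restriction maps, confirming that the canonical system on $X$ is cut out exactly by the two one-dimensional spaces $\ker \rho_i$.

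For the parameter count, I would simply add: Construction \ref{constr: X1} gives $9$ parameters for $(X_1, E_1)$, Construction \ref{constr: X2} gives $18$ parameters for $Y_2$ (hence for $X_2$), and choosing $E_2$ among the fibres of the elliptic fibration on $X_2$ is discrete (a general $Y_2$ is not isotrivial, so each isomorphism class of elliptic curve occurs finitely often), while the gluing data — an isomorphism $E_1 \cong E_2$ of the chosen elliptic curves — contributes no extra moduli once we quotient by the automorphisms (the isomorphism is unique up to the finite automorphism group of a general elliptic curve composed with translations, but translations of $E_1$ extend to automorphisms... rather: the relevant count is that fixing $E_1 \cong E_2$ imposes one condition matching the $j$-invariants). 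Since $X_1$ moves in $9$ parameters and $X_2$ in $18$, and the condition $E_1 \cong E_2$ (matching $j$-invariants, with $E_i$ varying in a $1$-dimensional family on each side as we move the surface, but $E_1$ is rigid on a fixed $X_1$ up to the half-pencil) needs to be handled with care: the honest bookkeeping is $9 + 18 = 27$, because the $j$-invariant of $E_1$ is already a function of the $9$ moduli of $(X_1,E_1)$ and matching it to $E_2$ (a fibre of the fibration on the $18$-dimensional family $X_2$) cuts down by... no: for each $X_2$ in the $18$-dimensional family the set of fibres realizing a prescribed $j$ is finite and nonempty, so the matched pairs form a family of dimension $9 + 18 = 27$. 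This is the claimed count.

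The main obstacle I expect is the verification that the general canonical curve is genuinely of type B rather than, say, irreducible of arithmetic genus two or of some degenerate type — equivalently, checking that $\rho_1, \rho_2$ are both non-injective with one-dimensional kernels and that the corresponding curves $C_i \in |E_i|$ are irreducible of arithmetic genus one meeting transversally at one point. This requires a careful analysis of the linear systems $|\ko_{X_i}(E_i)|$ on the singular Enriques and K3 surfaces (in particular ruling out base-point issues at $p_i$ and checking dimensions), which I would do by pulling back to the resolutions $Y_i$ and using the explicit descriptions — the superelliptic system $|4E_1 + 2\Lambda_1|$ in \cite{Dolgachev-Enriques} for $Y_1$ and the elliptic fibration on $Y_2$ — together with Proposition \ref{prop: extra info I surfaces} to match the restricted canonical ring with case B(1) of \cite{CFPR23a}. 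The parameter count's subtlety about the $j$-invariant matching is secondary and can be dispatched by a dimension-of-fibres argument for the map sending $(X_1, E_1, X_2, E_2)$ to the difference of $j$-invariants.
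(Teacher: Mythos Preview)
Your overall architecture matches the paper's: apply Koll\'ar's gluing to the two pairs, check stability and the invariants, then identify the canonical curve and count parameters. However, there is a genuine error in your setup that undermines the gluing step.

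You assert that on the resolution $Y_i$ the curve $E_i$ is \emph{disjoint} from the exceptional $(-2)$-curve $\Lambda_i$, and hence that $X_i$ is smooth along $E_i$. This is false and in fact contradicts the equality $E_i^2=\tfrac12$ you quote: on $Y_1$ the curve $\Lambda_1$ is a bisection of $|2E_1|$, so $\Lambda_1\cdot E_1=1$, and on $Y_2$ every fibre meets the section $\Lambda_2$ once; after contracting $\Lambda_i$ the self-intersection rises from $0$ to $\tfrac12$ precisely because $E_i$ hits $\Lambda_i$ once. Thus $E_i$ passes through the $A_1$-point $p_i$, and the different on $E_i^\nu=E_i$ is $\tfrac12 p_i$, not zero. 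Koll\'ar's gluing \cite[Thm.~5.13]{KollarSMMP} therefore requires the isomorphism $E_1\cong E_2$ to send $p_1$ to $p_2$; an arbitrary isomorphism will not do, and your adjunction argument (``both restrictions are trivial on the elliptic curve'') does not see this. The paper's proof makes exactly this point in one line: the gluing isomorphism must preserve the different, i.e.\ the point $p$.

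This error also affects your description of the canonical curve. The components $C_i\in|K_{X_i}+E_i|$ meet $E_i$ only at $p_i$ (on $Y_i$ they are disjoint from $E_i$ but both pass through $\Lambda_i$), so $C_1$ and $C_2$ meet at the image of $p$ in $X$; your phrase ``the single point where $E_1,E_2$ are glued'' is confused, since $E_1$ and $E_2$ are glued along the entire curve. Once the different is handled correctly, the type B shape and the parameter count $9+18=27$ go through as you outline, with the gluing isomorphism contributing no moduli because the isomorphisms $(E_1,p_1)\cong(E_2,p_2)$ form a finite set.
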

\begin{proof}
Note that $(X_i, E_i)$ are stable pairs with volume $\frac 12$.
By Koll\'ar's glueing construction \cite[Thm.~5.13]{KollarSMMP}, the two components can be glued to a stable surface if the glueing isomorphism $E_1\isom E_2$ preserves the different, that is, if it preserves the point $p$. 

It is easy to check from the classification of slc singularities that $X$ is $2$-Gorenstein. The two components of a general canonical curve $C$ are defined by sections in $H^0(X_i, K_{X_i}+E_i)$ and are thus two curves of arithmetic genus one, intersecting in the point $p$. 

The pair $(X_1, E_1)$ depends on $9$ parameters (see Construction \ref{constr: X1}) and $X_2$ depends on $18$ parameters (see Construction \ref{constr: X2}), which leaves us with finitely many choices of a fibre $E_2$ of the elliptic fibration on $Y_2$ which is isomorphic to $E_1$.
\end{proof}
\begin{rem}
The canonical system of a surface as in Proposition \ref{prop: geometry B} has a fixed part: on the Enriques surface $X_1$ there is a unique section of $K_{X_1}+E_1$, corresponding to the other half-pencil in the elliptic fibration on $Y_1$. 
\end{rem}

\subsection{Classification via the canonical ring}\label{sect: B algebraic}
To completely classify 2-Gorenstein stable I-surfaces with reduced canonical curve of type B, we again rely on a description of the canonical ring and show that this coincides with the geometric description given in Section \ref{sect: B geometry}.

\begin{prop}\label{prop: algebraic type B}
 Let $X$ be a $2$-Gorenstein $I$-surface with reduced canonical curve of  type $B$. Then the surface 
  $X$ is canonically embedded in $\pp(1_{x_0},1_x,2_y,3_w,4_v,5_z,6_u)$  with weighted homogeneous equations:
\begin{equation}\label{eq: B2}
\bigwedge^2\begin{pmatrix}0&y&w&z\\x&w&v&u\end{pmatrix}=0,\ \
\begin{array}{rcl}
z^2&=&y\tilde g_8(x_0,y,w,v)\\
zu&=&w\tilde g_8(x_0,y,w,v)\\
u^2&=&v\tilde g_8(x_0,y,w,v)+x^2\tilde k_{10}(x_0,x,v)
\end{array}
\end{equation}
  where $v^2$ appears in $\tilde g_8$ with non-zero coefficient, which we fix to be $1$. 
  
  Conversely, if $X$ is defined by general equations of this form, then $ \omega_X = \ko_X(1)$ and $X$ is a 2-Gorenstein stable I-surface.
\end{prop}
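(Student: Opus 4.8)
\textbf{Proof plan for Proposition \ref{prop: algebraic type B}.}

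The plan is to apply Reid's hyperplane section principle \cite{reid90} to the surjection $\phi\colon R(X,K_X)\onto R(C,\{\kl,\omega_C\})$ of Proposition \ref{prop: extra info I surfaces}, using the explicit description of the half-canonical ring of a reduced type B curve from Theorem \ref{thm: types and rings}. First I would recall that $\phi$ is surjective with kernel generated by $x_0$, so a minimal set of generators and relations of $R(X,K_X)$ is obtained by lifting the generators $x,y,w,v,z,u$ (in degrees $1,2,3,4,5,6$) of $R(C,\{\kl,\omega_C\})$ together with $x_0$ (degree $1$), and lifting the defining relations of $I$ (the $2\times 2$ minors together with the three quadratic-in-$(z,u)$ equations) to relations on $X$. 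Since $\phi$ is surjective and $x_0$ is a non-zerodivisor on $R(X,K_X)$ (it is the canonical section defining the Cartier divisor $C$, and restriction to $C$ kills exactly the ideal $(x_0)$), each relation of $R(C)$ lifts to a relation on $X$ modulo $x_0$, i.e. with the right-hand sides perturbed by arbitrary terms divisible by $x_0$ of the correct weight. Tracking these weighted degrees gives: the minors $\bigwedge^2\begin{pmatrix}0&y&w&z\\x&w&v&u\end{pmatrix}$ can be kept exactly (any $x_0$-correction can be absorbed by a coordinate change in the row entries, or has no room in low degree), while $z^2-y\tilde g_8$, $zu-w\tilde g_8$, $u^2-v\tilde g_8$ acquire correction terms; matching the weight $10$ of the last equation produces the term $x^2\tilde k_{10}(x_0,x,v)$, and the structural shape forces $g_8\rightsquigarrow \tilde g_8(x_0,y,w,v)$ consistently across the three equations because of the syzygies among the minors (the Koszul-type relations among $z^2,zu,u^2$ and the minors force the same $\tilde g_8$ to appear). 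The condition that $v^2$ appears in $\tilde g_8$ with nonzero coefficient is the lift of the corresponding condition in Theorem \ref{thm: types and rings}, and rescaling $v$ normalises the coefficient to $1$.

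The key point making this rigorous is that the ring $R(C,\{\kl,\omega_C\})$ has projective dimension small enough (it is a codimension-$3$ Gorenstein-type quotient, in fact the format here is that of a "rolling factors" / $2\times 4$-minors presentation) that Reid's principle applies cleanly: one checks $H^1$ of the relevant sheaves vanishes so that every generator and relation upstairs lifts, and that no new generators or relations are needed. Concretely I would verify that $R(X,K_X)=\bigoplus_m H^0(X,\omega_X^{[m]})$ has the stated Hilbert series (this is exactly the I-surface Hilbert series $\frac{1-t^{10}}{(1-t)^2(1-t^2)(1-t^5)}$ from Definition \ref{def: I surface}, available since by Proposition \ref{prop: canonical section}(ii) $X$ is an I-surface), and that the proposed presentation \eqref{eq: B2} in $\pp(1,1,2,3,4,5,6)$ cuts out a surface with exactly that Hilbert series; a Betti-number count for the complete-intersection-on-a-scroll format confirms there are no further relations.

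For the converse, given general $X$ defined by \eqref{eq: B2}, I would run adjunction: the equations describe $X$ inside the (weighted) scroll defined by the $2\times 2$ minors, of which $X$ is a "complete intersection of type rolling factors", so $\omega_X$ is computed by the adjunction formula for weighted complete intersections in a scroll, giving $\omega_X=\ko_X(1)$ after the standard bookkeeping of weights ($\sum \text{weights of ambient}=1+1+2+3+4+5+6=22$, minus the degrees $2+4+8+10+12$ of the defining equations in the scroll presentation, suitably interpreted). Then $K_X^2=1$ and $\chi(X)=3$ follow from the Hilbert series, $2K_X$ is Cartier by inspection of the (quotient, hypersurface-type) singularities occurring at the coordinate strata — here one checks the general member has only slc singularities, of the types catalogued in \cite{KollarSMMP}, all of index dividing $2$ — and ampleness of $\omega_X=\ko_X(1)$ plus slc singularities gives stability. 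Finally $q(X)=0$ and the plurigenera match an I-surface by the Hilbert series computation, so $X\in\overline\gothM_{1,3}$.

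\textbf{Main obstacle.} The delicate part is the "rolling factors" lifting: showing that the three equations $z^2,zu,u^2$ must acquire corrections governed by a \emph{single} degree-$8$ polynomial $\tilde g_8$ (so that the syzygies of the $2\times 2$ minors are respected and the ring is actually a flat deformation of $R(C)\times\IC[x_0]$), and that the only additional freedom is the term $x^2\tilde k_{10}(x_0,x,v)$ in the last equation — i.e. verifying that no other $x_0$-divisible corrections survive after coordinate changes and that the presentation obtained is genuinely minimal with no hidden generators in degree $\le 6$. This is where the bulk of the honest computation lies; everything else (adjunction, invariants, stability of the general member) is routine given the results already established.
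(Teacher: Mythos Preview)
Your overall plan matches the paper's: apply Reid's hyperplane section principle to lift the type-B half-canonical ring, then verify the converse by computing $\omega_X$ and checking the index-$2$ condition. You have also correctly identified the genuine difficulty, namely controlling the rolling-factors lift so that a single $\tilde g_8$ appears in all three equations and only $x^2\tilde k_{10}$ survives as an extra correction.

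Two points where the paper is more precise than your sketch, and where your sketch would need work. First, on the lift of the $2\times4$ matrix: you assert the minors can be kept exactly ``by coordinate change or no room in low degree''. In fact the matrix has weights $\left(\begin{smallmatrix}0&2&3&5\\1&3&4&6\end{smallmatrix}\right)$ and the only entry that could receive an $x_0$-correction is one of the two $w$-entries; the paper argues (and this is the point) that changing a single $w$ breaks the $16$ syzygies, so the matrix is rigid. Similarly, the lifted $u^2$-equation initially carries several $x_0$-divisible terms beyond $x^2\tilde k_{10}$, and one must perform explicit coordinate changes $x\mapsto x+\alpha x_0$, $v\mapsto v+\beta x_0^4$ to reach the stated normal form. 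The paper does this by hand and cross-checks with Ilten's deformation package; your plan should at least indicate how these coordinate changes are found.

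Second, and this is the actual gap: your adjunction argument for $\omega_X=\ko_X(1)$ is not correct as written. The surface $X$ is cut out by nine equations of degrees $3,4,6,6,8,9,10,11,12$; it is not a complete intersection, and the numbers you propose ($22-(2+4+8+10+12)$) do not correspond to anything in the presentation. The paper computes $\omega_X$ by exhibiting the minimal free resolution of $\ko_X$ over $\ko_\pp$, which is self-dual of length four with last term $\ko_\pp(-23)$; since $\omega_\pp=\ko_\pp(-22)$, dualising gives $\omega_X=\shext^4(\ko_X,\omega_\pp)=\ko_X(1)$. For the $2$-Gorenstein claim the paper does not inspect singularities directly but checks that $X$ misses the loci $\pp(3_w,6_u)$, $\pp(4_v)$, $\pp(5_z)$ where $\ko_\pp(2)$ fails to be invertible (the monomials $w^2,v^3,z^2,u^2$ all occur in separate equations). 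You should replace your adjunction sketch with one of these two concrete computations.
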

\begin{proof}
 
The equations from Theorem \ref{thm: types and rings} which define the canonical curve of type B, can be expressed in the following format:
\[\bigwedge^2\begin{pmatrix}0&y&w&z\\x&w&v&u\end{pmatrix}=0,\  \
\begin{array}{rcl}
z^2&=&yg_8(y,v)\\zu&=&wg_8(y,v)\\u^2&=&vg_8(y,v)+x^4h_8(x,v)
\end{array}
\]
This format is useful for studying the equations of the surface $X$ which contains $C$ as canonical hyperplane section.

We reconstruct $X$ from $C$ by applying the hyperplane section principle \cite{reid90}. The ideal of relations has 9 generators and 16 syzygies, so it is possible to work out the equations of $X$ by hand apr\`es Reid \cite{reid90}. We also used the computer package of Ilten \cite{VersalDeformationsArticle} which in turn follows an algorithm laid out by Stevens \cite{stevens}. We highlight the two most important features of the result.

The $2\times4$ matrix for $C$ is weighted homogeneous with weights $\left(\begin{smallmatrix}0&2&3&5\\1&3&4&6\end{smallmatrix}\right)$ and generic entries, except that $w$ appears twice. Thus (after relabelling the entries) the only possible change to the matrix for $X$ would be to one of the $w$-entries. In fact, it is impossible to change one $w$-entry without violating some of the 16 syzygies yoking the relations together, hence the matrix for $X$ is unchanged.

The last relation output by the algorithm has the unsatisfactory form
\[u^2=v\tilde g_8(x_0,y,w,v)+x^4\tilde h_{8}+x_0x^3a_8+x_0^2x^2b_8+x_0^3xc_8\]
where $\tilde h_8(x_0,x,v)|_{x_0=0}=h_8$ and $a_8,b_8,c_8$ are general forms of degree $8$ in $(x_0,x,v)$. We write $\tilde k_{10}=x^2\tilde h_8+x_0x a_8+x_0^2b_8$ and absorb those terms of $c_8$ that are divisible by $x$ or $v$ into $x^2\tilde k_{10}$ or $v\tilde g_8$ respectively. This leaves
\[u^2=v\tilde g_8(x_0,y,w,v)+x^2\tilde k_{10}(x_0,x,v)+c x_0^{11}x,\]
where $c$ is now a constant. Using the coordinate transformations $x\mapsto x+\alpha x_0$ and $v\mapsto v+\beta x_0^4$ for appropriate choices of parameters $\alpha$, $\beta$, we eliminate $c$ to obtain the stated equation. Moreover, the new coordinates do not affect the original choice of curve section $C$. Note that $\tilde g_8(x_0,y,w,v)|_{x_0=0}=g_8(y,v)$ because after utilising the relation $w^2=yv$, all monomials of $\tilde g_8$ involving $w$ also involve $x_0$, for degree reasons.

To find $\omega_X$, we follow the same procedure as in the proof of \cite[Cor. 3.12]{CFPRR23}. That is, we use computer algebra to get the minimal free resolution of $\OO_X$ as an $\OO_\pp$-module. This is a self-dual complex of length four because the homogeneous coordinate ring of $X$ is Gorenstein, and the last term of the complex is $\OO_\pp(-23)$. Since the complex is self-dual, and $\omega_\pp=\OO_\pp(-1-1-2-3-4-5-6)=\OO_\pp(-22)$, when we apply $\mathcal{H}om(-,\omega_\pp)$, we can read off $\omega_X=\mathcal{E}xt^4(\OO_X,\omega_\pp)=\OO_X(-22+23)=\OO_X(1)$.

To show that $\omega_X^{[2]}$ is invertible, it suffices to show that $X$ does not intersect the loci $\pp(3_w,6_u)$, $\pp(4_v)$, $\pp(5_z)$ where $\OO_\pp(2)$ is not invertible. This can be deduced from the equations \eqref{eq: B2}: the monomials $v^3$ and $z^2$ appear in some equation, so $X$ is disjoint from $\pp(4)$ and from $\pp(5)$, and since $w^2$ and $u^2$ appear in separate equations, it follows that $X$ is disjoint from $\pp(3,6)$ as well. Hence $\OO_X(2)$ is invertible and $X$ is 2-Gorenstein.
\end{proof}

\begin{cor}\label{cor: B decomposition}
  Let $X$ be a $2$-Gorenstein $I$-surface with reduced canonical curve of  type $B$. Then $X = X_1\cup X_2$ is reducible and fits in a diagram
\[ 
   \begin{tikzcd}
    X_1\cup X_2 \rar{2:1} \dar[hookrightarrow] & W_1\cup W_2 \dar[hookrightarrow] \\
    \pp(1_{x_0},2_y,3_w,4_v,5_z,6_u)\cup \pp(1_{x_0}, 1_x,4_v,6_u) \dar[hookrightarrow]\rar[dashed] & 
        \pp(1_{x_0},2_y,3_w,4_v)\cup \pp(1_{x_0}, 1_x,4_v)\dar[hookrightarrow] \\
    \pp(1_{x_0},1_x,2_y,3_w,4_v,5_z,6_u)\rar[dashed]
 & \pp(1_{x_0},1_x,2_y,3_w,4_v)
   \end{tikzcd}
  \]
  where $W_1 = \{ w^2-yv =0 \} \subset \pp(1_{x_0},2_y,3_w,4_v)$ and $W_2 = \pp(1_{x_0}, 1_x,4_v) $.
\begin{enumerate}
\item The intersection 
\[
 \begin{tikzcd}
X_1\cap X_2= E\rar[hookrightarrow] & \pp(1_{x_0},4_v,6_u) \isom \pp(1_{x_0^2},2_v,3_u)
 \end{tikzcd}
\]
  is a curve of arithmetic genus one embedded via the section ring of $\ko_E(p)$, where $p = (0:1_v: 1_u)$.
  \end{enumerate}
If the defining equations are sufficiently general, then:  
\begin{enumerate}[resume]\item The pair $(X_1,E)$ is a singular Enriques surface as described in Construction \ref{constr: X1}.
  \item The pair $(X_2, E)$ is a singular K3 surface as described in Construction \ref{constr: X2}.
    \end{enumerate}
    In particular, $X$ arises as in Proposition \ref{prop: geometry B}
\end{cor}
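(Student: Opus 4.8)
The plan is to read off everything from the explicit equations \eqref{eq: B2} of Proposition \ref{prop: algebraic type B}. First I would observe that setting $x = 0$ in \eqref{eq: B2} makes the last relation $u^2 = v\tilde g_8(x_0,y,w,v)$ dependent on the others (it equals $v$ times $z^2 = y\tilde g_8$ divided by $y$, compatibly with the rank condition $yv = w^2$, $zv = wu$ etc.), so the locus $\{x=0\}\cap X$ is the subscheme $X_1 \subset \pp(1_{x_0},2_y,3_w,4_v,5_z,6_u)$ cut out by the rank-$\le 1$ condition on $\left(\begin{smallmatrix}0&y&w&z\\ x_0'&w&v&u\end{smallmatrix}\right)$ — wait, more carefully: at $x=0$ the first column of the matrix becomes $(0,0)^T$ so the rank condition degenerates; I would instead note directly that the ideal generated by the $2\times 2$ minors together with $z^2 - y\tilde g_8$, $zu - w\tilde g_8$ and $x$ defines $X_1$, and that $X_1$ maps $2:1$ onto $W_1 = \{w^2 - yv = 0\}\subset \pp(1_{x_0},2_y,3_w,4_v)$ by projection away from $z,u$ (the cover being given by $z^2 = y\tilde g_8$, i.e.\ a hypersurface of degree $8$ in $\pp(1,1,2,4)$ after the standard identification). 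Similarly, setting $y = w = z = 0$ kills the first three columns of the matrix and the first two relations, and leaves $u^2 = x^2 \tilde k_{10}(x_0,x,v)$; this is the component $X_2 \subset \pp(1_{x_0},1_x,4_v,6_u)$, a double cover of $W_2 = \pp(1_{x_0},1_x,4_v)$ branched over $\{x^2\tilde k_{10}=0\}$, which is a hypersurface of degree $12$ in $\pp(1,1,4,6)$. I would then check that $X = X_1 \cup X_2$ scheme-theoretically by a dimension/degree count (or by noting $(x)\cap(y,w,z)$ is the full defining ideal after saturation), and that the diagram commutes by construction of the projections.

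Next, for (i), the intersection $X_1 \cap X_2$ is obtained by imposing both $x = 0$ and $y = w = z = 0$: then $\tilde g_8(0,0,0,v) = v^2$ (since we normalised the $v^2$-coefficient of $\tilde g_8$ to $1$ and all other monomials of $\tilde g_8|_{x_0,x,y,w \text{ restricted}}$ either vanish or reduce), so the surviving relation is $u^2 = v\cdot v^2 = v^3$ in $\pp(1_{x_0},4_v,6_u)$. Rescaling weights via $\pp(1_{x_0},4_v,6_u)\isom\pp(1_{x_0^2},2_v,3_u)$, the curve $E = \{u^2 = v^3\}$ — no, I would be more careful: $\{u^2 = v^3 + (\text{lower order in }x_0)\}$ is a plane cubic-type curve in $\pp(1,2,3)$, hence of arithmetic genus one, and the point $p = (0:1_v:1_u)$ is the unique point at $\{x_0 = 0\}$; the section ring $\bigoplus_m H^0(E,\ko_E(mp))$ has generators in degrees $1,2,3$ (namely $x_0^2, v, u$) which matches the weights, so $E$ is embedded by $|{\ko_E(p)}|$ as claimed. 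This is essentially the genus-one curve appearing as the conductor, already seen to have $E^2 = \tfrac12$ on each component in Lemma \ref{lem: X-reducible}.

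For (ii) and (iii) under a genericity assumption, I would identify the abstract structure of $X_1$ and $X_2$ with Constructions \ref{constr: X1} and \ref{constr: X2}. For $X_2$: it is a degree-$12$ hypersurface in $\pp(1,1,4,6)$ of the form $u^2 = x^2\tilde k_{10}(x_0,x,v)$, equivalently the double cover of $\pp(1_{x_0},1_x,4_v)$ branched over $\{x^2\tilde k_{10} = 0\}$, which visibly contains the line $\{x = 0\}$ with multiplicity two in the branch locus; blowing up / taking the minimal resolution contracts a $(-2)$-curve $\Lambda_2$ over this line and exhibits $X_2$ as the $A_1$-contraction of the K3 surface $Y_2$ double-covering $\IF_4$ (the branch curve $B_2 \in |s_\infty + 3s_0|$ corresponds to $\tilde k_{10}$ after resolving $\pp(1,1,4)$), which is exactly Construction \ref{constr: X2}. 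For $X_1$: it double-covers $W_1 = \{w^2 = yv\}\subset\pp(1_{x_0},2_y,3_w,4_v)$, which is the cone-like degenerate del Pezzo surface of degree $4$ (a quadric cone bundle picture, $W_1 \isom \pp(1,1,2)/\iota$ as in diagram \eqref{Enriques diagram}), and the cover equation $z^2 = y\tilde g_8(x_0,y,w,v)$ with $v^2 \mid \tilde g_8$ matches the K3 double cover $S_1 : c^2 = g_8(a_0,a_1^2,a_1b,b^2)$ of $\pp(1,1,2,4)$ described at the end of Construction \ref{constr: X1}, so $X_1$ is the associated singular Enriques surface; the curve $E \subset X_1$ is the half-pencil $E_1$ through the $A_1$-point. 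The final sentence ``in particular $X$ arises as in Proposition \ref{prop: geometry B}'' then follows since we have exhibited $X = X_1 \cup_{E} X_2$ with $(X_i, E)$ of the required types glued along the genus-one curve $E$ preserving the different (the point $p$), which is precisely the input of that proposition.

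The main obstacle I anticipate is the bookkeeping in (ii)–(iii): matching the weighted-projective hypersurface presentations of $X_1$ and $X_2$ to the branched-cover descriptions in the Constructions requires carefully tracking how the $A_1$-resolution interacts with the weighted blow-ups of $\pp(1,1,2)$ and $\pp(1,1,4)$, and verifying that genericity of $\tilde g_8$, $\tilde k_{10}$ translates to genericity of $B_1$, $B_2$ (in particular that the superelliptic system and the Enriques involution are correctly recovered). The reducibility claim $X = X_1\cup X_2$ and part (i) are, by contrast, direct consequences of the shape of the equations and should be essentially immediate.
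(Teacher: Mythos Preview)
Your overall strategy matches the paper's: read everything off from the explicit equations \eqref{eq: B2}. The reducibility argument and part (i) are essentially as in the paper (the paper phrases reducibility more cleanly by noting that $xy, xw, xz$ lie in the ideal, so $X \subset \{x=0\}\cup\{y=w=z=0\}$, and then invokes Lemma \ref{lem: X-reducible}).

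There is, however, a genuine error in your treatment of $X_2$. When you set $y=w=z=0$, the third relation does \emph{not} become $u^2 = x^2\tilde k_{10}$: the term $v\tilde g_8(x_0,y,w,v)$ survives as $v\tilde g_8(x_0,0,0,v) = v^3 + (\text{terms in }x_0,v)$, since $\tilde g_8$ depends on $x_0$ and $v$ as well and has $v^2$-coefficient normalised to $1$. The correct equation for $X_2$ is
\[
u^2 = v\tilde g_8(x_0,0,0,v) + x^2\tilde k_{10}(x_0,x,v)
\]
in $\pp(1,1,4,6)$. With your equation $u^2 = x^2\tilde k_{10}$ the surface would be non-normal along the entire curve $\{u=x=0\}$, and your subsequent claim that the branch curve on $\pp(1,1,4)$ contains the line $\{x=0\}$ with multiplicity two is false. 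In the correct picture the branch divisor is a general member of $|\ko_{\pp(1,1,4)}(12)|$ with leading term $v^3$, and the unique $A_1$ singularity of $X_2$ sits at the single point $(0:0:1_v:1_u)$ (the $\IZ/2$-point of the ambient space), through which $E = \{x=0\}$ passes; this is what matches Construction \ref{constr: X2}.

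Your sketch for $X_1$ is also a bit muddled: you write that the cover $X_1\to W_1$ is ``a hypersurface of degree $8$ in $\pp(1,1,2,4)$ after the standard identification'', but $W_1$ is not $\pp(1,1,2)$ and $X_1$ is not itself the K3 octic. The paper handles this by first rewriting the six equations of $X_1$ at $x=0$ as the $2\times 2$ minors of the symmetric matrix $\left(\begin{smallmatrix}y&w&z\\w&v&u\\z&u&\tilde g_8\end{smallmatrix}\right)$, then exhibiting explicitly the K3 double cover $S_1 = \{c^2 = g_8(a_0,a_1^2,a_1b,b^2)\}\subset\pp(1,1,2,4)$ with its free involution $\iota$ and computing $S_1/\iota \cong X_1$ via the $\iota$-invariants. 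You would need that intermediate step (or an equivalent one) to actually identify $X_1$ as an Enriques surface rather than just asserting it.
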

\begin{proof}
 The defining ideal of $X$ contains the three reducible equations  $xy=xw=xz=0$ cutting out the two (weighted) linear subspaces such that $X$ is contained in their union.
 Since $X$ cannot be contained in one of them, $X = X_1 \cup X_2$ has two irreducible  components; it cannot have more by Lemma \ref{lem: X-reducible}.
 
The linear projection corresponds to the subring generated by $x_0, x, y, w, v$ and the three equations on the right hand side of \eqref{eq: B2} show that the map is of degree two on both components of $X$.

We translate the equations into the geometric descriptions.

$(i)$ The equation for $E$, \[u^2 - v\tilde g_8(x_0, 0, 0, v) = u^2 - v^3 + \dots = 0,\] is obtained by substituting $x = y  = w = z = 0$ and is exactly the Weierstrass model we get from a divisor of degree one on a curve of arithmetic genus one.

$(iii)$  Notice that $X_2=X\cap(y=w=z=0)$ is the hypersurface of weighted degree $12$ defined by $(u^2=v\tilde g_8+x^2\tilde k_{10})$ in $\IP(1,1,4,6)$. For general $\tilde g$, $\tilde k$ this has one $A_1$ singularity at $(0:0:1_v:1_u)$. The curve $E$ is cut out by $x=0$ and thus passes through the singular point.
Thus $X_2$ is a singular K3 surface as described in Construction \ref{constr: X2}.

$(ii)$ 
The other component $X_1=X\cap(x=0)$ in $\pp(1,2,3,4,5,6)$ has equations
\begin{equation}\label{eq: X1}
 \bigwedge^2\begin{pmatrix}y&w&z\\w&v&u\\z&u&\tilde g_8\end{pmatrix}=0
\end{equation}

For $\tilde g_8$ general, $X_1$ has one $A_1$ singularity at $(0:0:0:1_v:0:1_u)$.
We are going to show that $X_1$ is a singular  Enriques surface by describing explicitly the singular  K3 cover $S_1 \to X_1$, compare \eqref{Enriques diagram}. 

Consider the weighted projective space $\pp(1_{a_0},1_{a_1},2_b,4_c)$ and let $S_1$ be defined by the equation 
\begin{equation}\label{eq: invt-octic}
c^2=g_8(a_0,a_1^2,a_1b,b^2).
\end{equation}
The surface $S_1$ has $A_1$ singularities at the two points $(0:0:1:\pm1)$. By adjunction for weighted projective hypersurfaces $S_1$ is a singular K3 surface.

Let $\iota$ be the involution on $\pp(1,1,2,4)$ given by 
$(a_0:a_1:b:c)\mapsto(a_0:-a_1:-b:-c)$.
One can check that $\iota$ induces a fixed point free involution on 
$S_1$  exchanging the two $A_1$ singularities.

We compute the equations of the quotient surface $S_1/\iota$. The $\iota$-invariants are
\[x_0=a_0,\ y=a_1^2,\ w=a_1b,\ v=b^2,\ z=a_1c,\ u=bc,\ c^2\]
with respective degrees $1,2,3,4,5,6,8$.
There are several monomial relations between these generators which are analogous to those for the affine cone on the Veronese embedding of $\pp^2$ with coordinates $a_1,b,c$, hence they may be expressed as the $2\times 2$ minors of a $3\times 3$ matrix as in equation \eqref{eq: X1}, thus 
$ S_1/\iota$ is isomorphic to $X_1 $.

The projection $\IP(1,1,2,4) \to \IP(1,1)$ induces an 
elliptic pencil on $S_1$  that descends to an elliptic pencil on $X_1$.
The image of the 
$\iota$-invariant elliptic curve $S_1\cap\{a_1=0\}$  on the quotient $X_1$ is the double curve  $y=w=z=0$. 

We have thus recovered the description given in Construction \ref{constr: X2}.
\end{proof}

\subsection{Locating $\gothD_B$ inside $\overline\gothM_{1,3}$}
In this section we show that every 2-Gorenstein I-surface with reduced canonical curve of type B is $\IQ$-Gorenstein smoothable. Together with a parameter count and some further analysis we get the following.

\begin{prop} \label{prop: locating divisor B}
 Consider the closure of the set of 2-Gorenstein I-surfaces with reduced canonical curve of type B, 
\[ \gothD_B = \overline{ \left\{ [X]\in \overline\gothM_{1,3} \left| \,  \text{\begin{minipage}[c]{5cm}
 $X$ 2-Gorenstein with reduced canonical curve of type B       
\end{minipage}
 }
 \right.\right\} } \subset \overline{\gothM}_{1,3}.\]
 \begin{enumerate}
  \item The set $\gothD_B$ is an irreducible divisor in the closure of the Gieseker moduli space.
\item  The intersection of divisors  $\gothD_A \cap \gothD_B$ contains the irreducible  subset of codimension $2$
 \[\gothR_{AB}  = \{ [X] \in \gothD_B \mid \text{ $\tilde g_8$  in \eqref{eq: B2} does not contain $y^4$}\}.\]
  \end{enumerate}
 \end{prop}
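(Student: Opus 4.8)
The plan is to establish (i) by separating irreducibility plus the dimension count from $\IQ$-Gorenstein smoothability, and then (ii) by a linear-algebra codimension count together with a degeneration argument. For irreducibility, recall that by Proposition \ref{prop: algebraic type B} and Corollary \ref{cor: B decomposition} every $2$-Gorenstein $I$-surface with reduced canonical curve of type B is canonically embedded in $\pp(1_{x_0},1_x,2_y,3_w,4_v,5_z,6_u)$ by the equations \eqref{eq: B2}, which are determined by the coefficients of the two forms $\tilde g_8$ and $\tilde k_{10}$; hence $\gothD_B$ is dominated by an open subset of an affine space modulo the connected group of admissible weighted coordinate changes of the ambient space, and is therefore irreducible. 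The parameter count of Proposition \ref{prop: geometry B} gives $\dim \gothD_B = 27 = \dim \overline\gothM_{1,3} - 1$, so once $\gothD_B \subseteq \overline\gothM_{1,3}^\text{class}$ is established it follows that $\gothD_B$ is an irreducible divisor in the closure of the Gieseker component.

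The crux of (i) is the smoothability. First I would exhibit an explicit one-parameter smoothing: deform \eqref{eq: B2} by replacing the zero entry of the $2\times 4$ matrix with a parameter $t$ and correspondingly adjusting the three scalar equations so that the system stays weighted homogeneous and the nine relations still satisfy their sixteen syzygies, obtaining a flat family $\mathcal{X}\to\IA^1_t$ with $\mathcal{X}_0 = X$. Running the free-resolution and duality computation of Proposition \ref{prop: algebraic type B} over the base shows $\omega_{\mathcal{X}_t}\cong\ko(1)$ and that $2K_{\mathcal{X}_t}$ stays Cartier for all $t$, so the family is $\IQ$-Gorenstein; for general $t\neq0$ the fibre is irreducible — the reducibility of $X$ came precisely from the vanishing corner forcing $xy=xw=xz=0$ into the ideal — with at worst canonical singularities, hence a canonical $I$-surface, whence $[X]\in\overline\gothM_{1,3}^\text{class}$. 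The main obstacle here is the flatness and syzygy bookkeeping, since $X$ is not a complete intersection; the fall-back is the $\IQ$-Gorenstein deformation-theoretic argument of \cite{CFPRR23}, checking that the singularities of $X$ (the two $A_1$ points, the non-normal double elliptic curve $E$, and the glued non-Gorenstein point) are $\IQ$-Gorenstein smoothable and that the relevant obstruction group vanishes, so that a general $\IQ$-Gorenstein deformation of $X$ is a smooth $I$-surface.

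For (ii), the locus $\gothR_{AB}$ is cut out inside the irreducible $27$-dimensional $\gothD_B$ by the single linear condition that the coefficient of $y^4$ in $\tilde g_8$ vanish, so it is irreducible of codimension one in $\gothD_B$, i.e.\ of codimension two in $\overline\gothM_{1,3}$, and trivially $\gothR_{AB}\subseteq\gothD_B$. It remains to show $\gothR_{AB}\subseteq\gothD_A$. When the $y^4$-coefficient vanishes one has $g_8(y,v) = v(\beta y^2 + v)$ divisible by $v$, and by \cite{CFPR23a} this should be exactly the locus where the type-B ggs structure on the genus-two canonical curve is a flat limit of type-A ggs structures; I would propagate this degeneration to the surfaces via Reid's hyperplane section principle \cite{reid90}, producing a one-parameter $\IQ$-Gorenstein family with central fibre $X$ whose general fibre is a $2$-Gorenstein, non-Gorenstein hypersurface of degree $10$ in $\pp(1,1,2,5)$ passing through $(0:0:1:0)$, that is, a surface of type A with one $\frac14(1,1)$-point in the sense of Theorem \ref{thm: surface-A}, giving $[X]\in\gothD_A$. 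The delicate point is again making rigorous the passage from the curve-level degeneration to a surface-level $\IQ$-Gorenstein degeneration, and checking that the general fibre genuinely acquires the $\frac14(1,1)$-singularity, so that it lies in $\gothD_A$ and not merely in its Gorenstein sublocus.
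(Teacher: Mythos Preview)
Your overall architecture for (i) --- irreducibility via parameter space, dimension via Proposition \ref{prop: geometry B}, smoothability via deforming the zero corner of the $2\times 4$ matrix --- matches the paper's. But you are missing the one computation that makes the whole proof go through, and without it both (i) and (ii) remain incomplete.

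When you replace the zero entry by $\lambda$ and adjust the three scalar equations (the paper's adjustment is $z^2=y\tilde g_8+\lambda^2\tilde k_{10}$, $zu=w\tilde g_8+\lambda x\tilde k_{10}$, $u^2=v\tilde g_8+x^2\tilde k_{10}$), the point is that for $\lambda$ invertible the matrix relations let you \emph{eliminate} $w,v,u$ outright: $w=\lambda^{-1}xy$, $v=\lambda^{-2}x^2y$, $u=\lambda^{-1}xz$. Substituting back, all nine relations reduce to the single equation $f=z^2-y\tilde g_8-\lambda^2\tilde k_{10}$ (the others become $\lambda^{-1}xf$ and $\lambda^{-2}x^2f$), so $\mathcal{X}_\lambda$ is a degree-$10$ hypersurface in $\IP(1,1,2,5)$. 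This gives smoothness of the general fibre for free (generic such hypersurface), makes flatness immediate (equidimensional over a curve), and identifies $\omega_{\mathcal{X}/\IA^1}=\ko(1)$. Your proposal asserts ``at worst canonical singularities'' without a mechanism to check it, and the fall-back obstruction-theory route would need a separate argument that the generic $\IQ$-Gorenstein deformation is actually smooth rather than merely existing.

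For (ii) the paper does \emph{not} go via curve-level degenerations of ggs structures; it simply reuses the same family. Once you know $\mathcal{X}_\lambda$ is the hypersurface $z^2=y\tilde g_8(x_0,y,\lambda^{-1}xy,\lambda^{-2}x^2y)+\lambda^2\tilde k_{10}(x_0,x,\lambda^{-2}x^2y)$, observe that $\tilde k_{10}$ contains no power of $y$ by its variables, so the only possible source of a $y^5$ term is $y\cdot y^4$ from $y\tilde g_8$. Hence ``$\tilde g_8$ lacks $y^4$'' is exactly the condition that $\mathcal{X}_\lambda$ passes through $(0{:}0{:}1{:}0)$, i.e.\ lies in $\gothD_A$ by Theorem \ref{thm: surface-A}. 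Your proposed route through \cite{CFPR23a} and the hyperplane section principle is not wrong in spirit --- the divisibility $g_8=v(\beta y^2+v)$ you noticed is the curve-level shadow of exactly this --- but lifting a curve-level degeneration to a $\IQ$-Gorenstein surface family is genuinely delicate, as you flag, and it is unnecessary once the elimination is in hand.
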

\begin{proof}
Let us consider again the equations \eqref{eq: B2} defining $X$. For $\tilde g_8$ we have $13$ free parameters while for $\tilde k_{10}$ we have $21$ free parameters. So $\gothD_B$ is dominated by a linear space of dimension $34$ and thus irreducible.

From the parameter count in Proposition \ref{prop: geometry B} we see that $\dim \gothD_B = 27$.

To prove that $\gothD_B$ is in the closure of the Gieseker component, it is enough to prove that the general surface $X$ in $\gothD_B$ is $\IQ$-Gorenstein smoothable. 
 
We are going to show that the  following family $\mathcal{X} \to  \IA^1_{\lambda}$ in $\pp(1,1,2,3,4,5,6)\times \IA^1_{\lambda}$ gives a smoothing of $X$:
\[\bigwedge^2\begin{pmatrix}\lambda&y&w&z\\x&w&v&u\end{pmatrix}=0,\  \
\begin{cases}
z^2&=y\tilde g_8(x_0,y,w,v)+\lambda^2\tilde k_{10}(x_0,x,v)\\
zu&=w\tilde g_8(x_0,y,w,v)+\lambda x\tilde k_{10}(x_0,x,v)\\
u^2&=v\tilde g_8(x_0,y,w,v)+x^2\tilde k_{10}(x_0,x,v)
\end{cases}
\]
Comparing the equations of $\mathcal{X}_0$ with \eqref{eq: B2} it is clear that the central fibre describes a  surface in $\gothD_B$. 
  Now assume that $\lambda$ is invertible. Then the equations allow us to eliminate $w,v,u$:
  \begin{equation}\label{eq: general fibre}
w  = \frac{xy}{\lambda}, \; v  = \frac{xw}{\lambda}=\frac{x^2y}{\lambda^2}, \;u = \frac{xz}{\lambda} .   
  \end{equation}
  Hence the minors of the $2\times 4$ matrix vanish identically. If we define
  \[ f  = z^2 - y\tilde g_8 - \lambda^2\tilde k_{10},\]
  then after substituting using \eqref{eq: general fibre}, the remaining three equations become 
  \[ f, \; \frac{x}{\lambda}f, \; \frac{x^2}{\lambda^2}f.\]
  In other words, for $\lambda\neq 0$ the fibre
  \[ \kx_{\lambda}\isom \Proj\left( \IC[x_0,x,y,z]/f\right)\]
  is a smooth I-surface if $\lambda$ is sufficiently general, because $\tilde g_8$ and $\tilde k_{10}$ are general by hypothesis.
   The family is equidimensional over a curve and hence flat. The relative canonical bundle $\omega_{\kx/\IA^1} = \ko_{\kx/\IA^1} (1)$ is $\IQ$-Cartier, so this is  a $\IQ$-Gorenstein smoothing by \cite[Definition-Theorem 3.1]{KollarModuli}. This concludes the proof of $(i)$.
   
   To prove $(ii)$ we  assume that $\tilde g$ does not contain the monomial $y^4$ with non-zero coefficient, which is a codimension one condition on $\gothD_B$ by our parameter count. 
   Since $\tilde k_{10}$ does not contain powers of $y$, every general fibre is isomorphic to a hypersurface of degree 10 in $\IP(1,1,2, 5) $ passing through the point $(0:0:1:0)$, which are the surfaces parametrised by $\gothD_A$ by Theorem \ref{thm: surface-A}. So $\gothR_{AB} \subset \gothD_A\cap \gothD_B$ as claimed.
  \end{proof}
\begin{rem}
 The fibrewise projection of the family constructed in the proof of Proposition \ref{prop: locating divisor B} gives a family $\mathcal{W}/\IA^1_{\lambda}$ in $\pp(1,1,2,3,4)\times\IA^1_{\lambda}$ defined by
\[
\bigwedge^2\begin{pmatrix}\lambda&y&w\\x&w&v\end{pmatrix}=0
\]
of which $\kx$ is a double cover. Thus $\kw$ is a degeneration   of   $\IP(1,1,2)$ to the reducible rational surface $W = W_1 \cup W_2$ from Corollary \ref{cor: B decomposition}. 
\end{rem}

\begin{rem} 
The total space of $\mathcal{X}/\IA^1_{\lambda}$ constructed in the proof can be expressed as the $4\times4$ Pfaffians of the following extrasymmetric skew $6\times 6$ matrix, where we omit the diagonal zero-entries:
\[\begin{pmatrix}
0&\lambda&y&w&z\\
&x& w 
&v&u\\
&&z&u&\tilde g\\
&&&0&\lambda \tilde k\\
&&&&x\tilde k
\end{pmatrix}\]
On the first component of the central fibre $\kx_0\cap(x=0)$, the Pfaffians are equal to the $2\times2$ minors of the upper-right $3\times 3$ symmetric block. 
These already appeared in \eqref{eq: X1}.

Using \cite{VersalDeformationsArticle} one can check that every deformation of a general $X$ fits this format, so the versal deformation is smooth of dimension $28$, so also the moduli space should be smooth at the general point of $\gothD_B$.

Maybe this format can also be used to study the interaction with other known strata in the moduli space.
\end{rem}

\begin{rem}
 In \cite{CFPR22}  we explained how to compute the Deligne mixed Hodge structure of a stable I-surface.  If $X$ has a non-reduced canonical curve of type B, then the components of the normalisation satisfy $p_g(X_1)+p_g(X_2) = 1$ and it follows that the mixed Hodge structure on $H^2(X, \IZ)$ is not pure. In the notation of Robles \cite{Robles16}, the general such surface has Hodge type $\lozenge_{0,1}$.
 
From her results one can  also infer that no stable I-surface with Hodge type $\lozenge_{0,0}$, that is, pure Hodge structure in the second cohomology, is in $\gothD_B$. This applies in particular to the stratum $\gothR_{DD}$ described in Theorem \ref{thm: DD}.
\end{rem}
\section{Components of I-surfaces with only non-reduced  canonical curves }\label{sec:  canonical curve nonreduced}

 We now classify 2-Gorenstein I-surfaces with only non-reduced canonical curves. The method employed in Section \ref{sec: typeA} and \ref{sec: typeB}, lifting the canonical ring from the canonical curve, did not lead us to success in this case because we were unable to classifiy the relevant curves and ggs structures. Therefore, we take a more geometric approach, classifying the component surfaces directly.

 It will turn out that each component is a singular K3 surface of a particular type and we will remain with two cases, type {DD} and type {DE}. These will be treated in  Section \ref{sect: DD} and Section \ref{sect: DE} respectively.
 
\subsection{Numerical restrictions}
 Let $X$ be a 2-Gorenstein I-surface and  assume from now on  that every canonical curve $C\in |K_X|$  is non-reduced. By Proposition \ref{prop: canonical section}, $X$ is reducible 
 with normalisation $(\bar X,\bar D)=(\bar X_1, \bar D_1)\sqcup (\bar X_2, \bar D_2)$,   $\bar D_i= \bar \Gamma_i + \bar Z_i$, where $\bar \Gamma_1$  and $\bar \Gamma_2$ are identified in
  $X$ and 
 $K_{\bar X_i}+ \bar Z_i=0$. Moreover  $\bar \Gamma_1$ and $\bar \Gamma_2$ are irreducible  curves  since 
 $1= 2(K_{\bar X_i}+\bar D_i)(K_{\bar X_i}+\bar D_i)=2(K_{\bar X_i}+\bar D_i) \bar \Gamma_i$.

We write $X = X_1\cup_{\Gamma} X_2$. 
For both components consider the refinement of the standard pushout glueing diagram arising from Koll\'ar's glueing theorem \cite[Thm.~5.13]{KollarSMMP}:
\[ 
 \begin{tikzcd}
  \bar X_i \dar[swap]{\text{normalisation}} \rar[hookleftarrow] & \bar D_i = \bar \Gamma_i + \bar Z_i \dar& \bar D_i^\nu  \lar\\
  \tilde X_i \dar[swap]{\text{$S_2$-fication} }\rar[hookleftarrow] &  \tilde D_i = \tilde \Gamma_i + \tilde Z_i\dar\\
  X_i \rar[hookleftarrow] & D_i  = \Gamma + Z_i
   \end{tikzcd}.
\]
 The idea is to consider the properties of the components individually, which is problematic because they are in general not $S_2$, so not  stable log surfaces. The purpose of the $S_2$-fication   is to remedy this, so $(\tilde X_i , \tilde\Gamma_i)$ is a stable log surface with non-normal locus $\tilde Z_i$ (possibly empty). One could alternatively describe this by glueing only part of the boundary of $(\bar X_i, \bar D_i)$.

\begin{lem}\label{lem: chi}
 In the above situation we have $K_{\tilde X_i} =0$, in particular $\tilde X_i$ is Gorenstein with slc singularities. The holomorphic Euler characteristics satisfy
 \begin{equation}\label{eq: chi} \chi(X) +\chi(\tilde \Gamma_1)+ \chi(\tilde \Gamma_2) = \chi(\tilde X_1) + \chi(\tilde X_2) + \chi(\Gamma).\end{equation}
 The curves $\tilde \Gamma_1$ and $\tilde \Gamma_2$ are irreducible, and their normalisations are isomorphic. 
 \end{lem}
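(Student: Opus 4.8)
The plan is to unwind the two refinement diagrams for $\bar X_i\to\tilde X_i\to X_i$ and combine them with the normalisation-of-$X$ formula $\chi(X)=\chi(\bar X)+\chi(D)-\chi(\bar D)$ recalled in Section~\ref{notation}. First I would prove $K_{\tilde X_i}=0$. We already know $K_{\bar X_i}+\bar Z_i=0$, i.e. $\bar Z_i$ is the conductor for the gluing $\bar X_i\rightsquigarrow\tilde X_i$ (only the part $\bar Z_i$ of $\bar D_i$ is glued to itself, $\bar\Gamma_i$ being glued to the other component), so by Koll\'ar's adjunction/gluing for the intermediate surface $\tilde X_i$ we have $\omega_{\tilde X_i}$ pulling back to $\omega_{\bar X_i}(\bar Z_i)=\OO_{\bar X_i}$; since $\tilde X_i$ is $S_2$ and demi-normal this forces $\omega_{\tilde X_i}\cong\OO_{\tilde X_i}$, hence $\tilde X_i$ is Gorenstein, and being obtained from the slc surface pair by Koll\'ar gluing it has slc (in fact semi-log-canonical with trivial canonical) singularities.

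Next I would establish the $\chi$-identity. Apply $\chi(X)=\chi(\bar X)+\chi(D)-\chi(\bar D)$ to the normalisation $\pi\colon\bar X=\bar X_1\sqcup\bar X_2\to X$, getting $\chi(X)=\sum_i\chi(\bar X_i)+\chi(D)-\sum_i\chi(\bar D_i)$ where $D=\pi(\bar D)$. Then apply the same formula to each intermediate gluing $\bar X_i\to\tilde X_i$, whose conductor curves are $\bar Z_i$ on $\bar X_i$ and $\tilde Z_i=\pi_i(\bar Z_i)$ on $\tilde X_i$: $\chi(\tilde X_i)=\chi(\bar X_i)+\chi(\tilde Z_i)-\chi(\bar Z_i)$. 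Eliminating $\chi(\bar X_i)$ and using $\chi(\bar D_i)=\chi(\bar\Gamma_i)+\chi(\bar Z_i)-\chi(\bar\Gamma_i\cap\bar Z_i)$ together with the corresponding decomposition on $\tilde X_i$, plus the fact that the gluing $X=X_1\cup_\Gamma X_2$ contributes $\chi(D)$ with $D=\Gamma\cup\bigcup Z_i$ — a careful but elementary bookkeeping of the inclusion–exclusion over the three nested levels — collapses everything to \eqref{eq: chi}. The cleanest route is probably to note that the composite $\bar X_i\to X_i$ need not be used directly; instead one glues the $\tilde X_i$ along $\tilde\Gamma_1\cong\tilde\Gamma_2$ (their normalisations being identified via $\tau$) to recover $X$, so that one more application of the normalisation formula, now to $\tilde X_1\sqcup\tilde X_2\to X$ with conductor $\Gamma$, gives $\chi(X)=\chi(\tilde X_1)+\chi(\tilde X_2)+\chi(\Gamma)-\chi(\tilde\Gamma_1)-\chi(\tilde\Gamma_2)$, which is exactly \eqref{eq: chi} after rearranging. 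I would present it in this second, shorter form.

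Finally, irreducibility of $\tilde\Gamma_i$ is inherited from that of $\bar\Gamma_i$, which was already shown (the computation $1=2(K_{\bar X_i}+\bar D_i)\bar\Gamma_i$ forces $\bar\Gamma_i$ to be irreducible and $2(K_{\bar X_i}+\bar D_i)$-degree one), since $\tilde\Gamma_i$ is the image of $\bar\Gamma_i$ under the birational morphism $\bar X_i\to\tilde X_i$; and their normalisations are both identified with $\bar\Gamma_i^\nu=\bar D^\nu$'s relevant component, hence with $\tilde\Gamma_1^\nu\cong\tilde\Gamma_2^\nu$ via $\tau$, because the gluing isomorphism that reconstructs $X$ identifies the normalisations of the two copies of $\Gamma$.

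I expect the main obstacle to be the bookkeeping of holomorphic Euler characteristics across the three nested levels $\bar X_i\to\tilde X_i\to X_i$ (and the global $\tilde X_i\to X$), in particular keeping track of how the curves $\bar Z_i$, $\tilde Z_i$, $Z_i$, $\bar\Gamma_i$, $\tilde\Gamma_i$, $\Gamma$ and their mutual intersections enter, and making sure the "$S_2$-fication" step is really an instance of Koll\'ar gluing so that the conductor formula applies verbatim. Once the framework is set up, the identity \eqref{eq: chi} drops out; the $K_{\tilde X_i}=0$ statement is the conceptually load-bearing input and should be isolated first.
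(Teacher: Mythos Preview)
Your ``second, shorter form'' is exactly the paper's argument: the paper considers $\tilde\pi\colon \tilde X_1\sqcup\tilde X_2\to X$ and derives \eqref{eq: chi} from the two restriction sequences for $\Gamma$ and $\tilde\Gamma$, after showing $\ki_\Gamma\cong\tilde\pi_*\ko_{\tilde X}(-\tilde\Gamma)$. Two small corrections. First, you write that one ``glues the $\tilde X_i$ along $\tilde\Gamma_1\cong\tilde\Gamma_2$'': this is wrong, as in general $\tilde\Gamma_1\not\cong\tilde\Gamma_2$ (indeed in type DE they have different $\chi$); only their normalisations are identified. Second, the map $\tilde X_1\sqcup\tilde X_2\to X$ is \emph{not} a normalisation, so you cannot invoke the formula $\chi(X)=\chi(\bar X)+\chi(D)-\chi(\bar D)$ verbatim; the paper supplies the missing step by observing that $\ki_\Gamma$ is $S_2$ (depth lemma, since $\ko_\Gamma$ is $S_1$) and agrees with $\tilde\pi_*\ko_{\tilde X}(-\tilde\Gamma)$ in codimension one, hence everywhere. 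You flagged this as the likely obstacle, and that is precisely what needs to be filled in.
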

\begin{proof}
 Outside a subset of codimension two we have that the pullback of $K_{\tilde X_i}$ is $K_{\bar X_i} + \bar Z_i$, which is trivial by Proposition \ref{prop: canonical section}. Since $\tilde X_i$ is $S_2$, hence Cohen--Macaulay, also the canonical sheaf is $S_2$, thus trivial. 
 
 To prove the second part consider the map $ \tilde{\pi} \colon (\tilde X, \tilde \Gamma) = (\tilde X_1, \tilde \Gamma_1) \sqcup (\tilde X_2, \tilde \Gamma_2) \to X$ inducing   the diagram 
 \[
  \begin{tikzcd}
  0 \rar& \ki_\Gamma \rar\dar & \ko_X \rar\dar& \ko_\Gamma\rar\dar& 0\\
0\rar& \tilde \pi_*\ko_{\tilde X}(-\tilde \Gamma) \rar & \tilde \pi_*\ko_{\tilde  X}  \rar&\tilde \pi_*\ko_{\tilde  \Gamma}\rar& 0.
  \end{tikzcd}
 \]
The component $\Gamma$ of the non-normal locus of $X$ which connects the two components is a reduced subscheme of pure codimension one, so $\ko_\Gamma$ is $S_1$. By the depth Lemma $\ki_\Gamma$ is $S_2$ and coincides in codimension one with $\tilde \pi_*\ko_{\tilde X}(-\tilde \Gamma)$ (as in the usual case of the normalisation), so
 they are  isomorphic.   Hence
 from the additivity of the Euler characteristic   we get $$\chi(\ko_X) - \chi (\ko_{\Gamma})= \chi (\ki_{\Gamma})= \chi (\tilde  \pi_*\ko_{\tilde X}(-\tilde \Gamma)) =
 \chi( \tilde \pi_*\ko_{\tilde X}(-\tilde \Gamma)) - \chi (\tilde \pi_*\ko_{\tilde  \Gamma}),$$ which is equivalent to 
Formula \eqref{eq: chi}. 

The curve $\tilde \Gamma_i$ is irreducible because $\bar \Gamma_i$ is.
Since $\tilde \Gamma_1$ is glued to $\tilde \Gamma_2$, their normalisations are exchanged by the glueing involution and hence are isomorphic.
\end{proof}

We need the following general lemma. 
\begin{lem}\label{lem: K3-regular}
Let $ S$ be an irreducible  projective  slc surface such that $K_{ S}=0$. 
If $\chi( S)>0$, then $\chi( S)=2$. 
\end{lem}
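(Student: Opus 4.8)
The plan is to exploit that for an slc surface $S$ with $K_S = 0$, the Euler characteristic $\chi(S) = \chi(\OO_S)$ can only be $1$ or $2$, essentially because such a surface is a degenerate K3 or Enriques surface, and Enriques-type surfaces have $\chi = 1$. I would argue as follows.

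First I would reduce to the normal case by passing to the normalisation $\nu\colon S^\nu \to S$ with conductor divisor $\bar D^\nu \subset S^\nu$, so that $(S^\nu, \bar D^\nu)$ is an slc (in fact log canonical) pair with $K_{S^\nu} + \bar D^\nu = \nu^* K_S = 0$. Using the additivity of Euler characteristics from the conductor sequence, $\chi(\OO_S) = \chi(\OO_{S^\nu}) + \chi(\OO_{D^\nu}) - \chi(\OO_{\bar D^\nu})$, where $\bar D^\nu \to D^\nu$ is the quotient by the conductor involution. I would note $\chi(\OO_{\bar D^\nu}) - \chi(\OO_{D^\nu}) \geq 0$ (a quotient of curves can only raise $\chi$, as on a desingularisation the quotient map is ramified), so $\chi(S) \leq \chi(\OO_{S^\nu})$. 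If $\bar D^\nu = 0$ then $S$ is already normal with canonical singularities and trivial canonical class — i.e. a K3, (log) Enriques, abelian, or bielliptic surface, with $\chi \in \{0,1,2\}$, so $\chi(S) > 0$ forces $\chi(S) \in \{1,2\}$; and a K3 has $\chi = 2$ while Enriques/bielliptic have $\chi = 1$, so I only need to exclude $\chi(S) = 1$ when some gluing or boundary is present — but that case simply gives a \emph{smaller} answer, so actually I must be careful: the statement is that $\chi(S) > 0$ forces $\chi(S) = 2$, which means I have to show $\chi(S) = 1$ is impossible for these \emph{irreducible} slc surfaces with $K_S = 0$.

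So the real content is: an irreducible slc surface $S$ with $K_S = 0$ and $\chi(\OO_S) = 1$ does not exist. Here I would run the classification of such surfaces. Taking a log resolution $(Y, \Delta)$ of $(S^\nu, \bar D^\nu)$ with $K_Y + \Delta \equiv (\text{effective exceptional})$, one sees $Y$ has non-negative Kodaira dimension only in restricted ways; in practice $(S^\nu, \bar D^\nu)$ with $K_{S^\nu} + \bar D^\nu = 0$ and $\bar D^\nu \neq 0$ or with worse-than-canonical singularities falls into the list of degenerate (stable) K3 pairs or log-Enriques surfaces, and one computes $\chi$ directly. The cleanest route: Liu--Rollenske and the earlier references in the excerpt (generalised Kodaira vanishing, \cite{liu-rollenske14}) give $h^i(\OO_S) = 0$ for $i > 0$ iff certain conditions hold; in general $\chi(\OO_S) = 1 + p_g(S)$ fails only through $h^1$. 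For a surface with $K_S = 0$ one has $h^2(\OO_S) = h^0(\omega_S) = h^0(\OO_S) = 1$, so $\chi(\OO_S) = 2 - h^1(\OO_S)$, and therefore $\chi(S) > 0$ is equivalent to $h^1(\OO_S) \leq 1$, with $\chi(S) = 2 \iff h^1 = 0$ and $\chi(S) = 1 \iff h^1 = 1$. Thus the lemma is equivalent to: an irreducible slc surface with $K_S = 0$ cannot have $h^1(\OO_S) = 1$.

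The key step — and the main obstacle — is therefore excluding $h^1(\OO_S) = 1$. I expect to handle this by the semistable/slc analogue of the fact that a (log) Enriques surface has $h^1 = 0$ once we account for torsion: if $h^1(\OO_S) = 1$ then by Serre duality and $K_S = 0$ there is a nontrivial extension giving an étale-type $\IZ$ or $\IC^*$ cover, but the relevant slc surfaces with $K = 0$ are simply connected in the sense that $H^1(\OO_S) = 0$ — concretely, one reduces via the normalisation and conductor to either (a) $S^\nu$ a K3 with $h^1 = 0$ (and then the gluing/boundary correction only drops $\chi$ by the non-negative quantity $\chi(\OO_{\bar D^\nu}) - \chi(\OO_{D^\nu})$, which if positive gives $\chi(S) \leq 1$, so one must show it is then exactly $\chi(\OO_{\bar D^\nu}) - \chi(\OO_{D^\nu}) \geq 2$, i.e. it cannot be $1$); or (b) $S^\nu$ of Enriques/bielliptic/abelian type, where $\chi(\OO_{S^\nu}) \leq 1$ and any nonzero boundary correction pushes $\chi(S) \leq 0$, contradicting $\chi(S) > 0$, while with zero boundary $S = S^\nu$ is smooth (since canonical singularities with $\bar D^\nu = 0$) and then $\chi = 1$ genuinely occurs — so one must rule this out by another mechanism, namely that such an $S$ would not be \emph{slc with $K_S = 0$ and irreducible} fitting the hypothesis, or rather that the lemma as used only needs $\chi(S) = 2$ as the sole positive value, and an honest Enriques surface has $\chi = 1 > 0$, which would contradict the lemma. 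This tension tells me the correct argument must instead use that $S$ is a \emph{degeneration inside our specific family} — i.e. invoke Lemma~\ref{lem: chi} showing $K_{\tilde X_i} = 0$ together with the constraint that $\chi(X) = 3$ and the Euler characteristic balance $\eqref{eq: chi}$, so that only K3-type components can have positive $\chi$. Concretely, I would prove the general lemma by showing: $\chi(S) \le \chi(\OO_{S^\nu}) \le 2$ always, and $\chi(\OO_{S^\nu}) = 2$ forces $S^\nu$ to be a (possibly non-minimal, canonical) K3 with $\bar D^\nu = 0$, hence $S = S^\nu$ a K3 with $\chi = 2$; while $\chi(\OO_{S^\nu}) \le 1$ combined with $\bar D^\nu$ possibly nonzero gives $\chi(S) \le 1$, and equality $\chi(S) = 1$ happens only for Enriques-type $S^\nu$ with $\bar D^\nu = 0$ — a case I then exclude because such $S$ with $K_S=0$ has $\pi_1 \ne 1$ and cannot be glued into an I-surface with $\chi = 3$; but since the lemma is stated standalone, I will instead present the dichotomy argument on $\chi(\OO_{S^\nu})$ carefully and note that $\chi(S) = 1$ forces $S$ to be Gorenstein with $K_S = 0$, $h^1(\OO_S) = 1$, i.e. a log-Enriques or (bi)elliptic slc surface, and then directly check in the slc classification of $K=0$ surfaces with $\chi = 1$ that no such \emph{irreducible} example arises once one requires the canonical class to be \emph{literally} $0$ rather than torsion — which for Enriques fails since $K$ is $2$-torsion and nonzero. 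That last observation is in fact the crux: \textbf{the hypothesis $K_S = 0$ (not merely numerically trivial) eliminates Enriques and bielliptic surfaces outright}, since their canonical bundle is a nontrivial torsion element; and abelian surfaces have $\chi = 0$, not $> 0$; so the only surfaces with $K_S = 0$ and $\chi(S) > 0$ are K3-type, for which $\chi = 2$. I would write this up as: reduce to $S^\nu$ via the conductor, observe $\chi(S) \le \chi(\OO_{S^\nu})$, classify $S^\nu$ (minimal model has $K = 0$ torsion, combined with $\bar D^\nu$ and the discrepancies forcing the torsion to vanish), conclude $S^\nu$ is a canonical K3 with empty boundary, hence $S$ is K3 and $\chi(S) = 2$.
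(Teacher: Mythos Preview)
Your proposal has a genuine gap: the inequality $\chi(S) \le \chi(S^\nu)$ on which your non-normal case rests is false. Take $S^\nu = \IP^2$ with $\bar D$ a smooth cubic, glued to itself via its hyperelliptic involution. The pair $(\IP^2,\bar D)$ is lc, the resulting $S$ is irreducible slc with $K_S = 0$ (the Poincar\'e residue of the generator of $H^0(\omega_{\IP^2}(\bar D)) \cong \IC$ is the holomorphic $1$-form on $\bar D$, which is anti-invariant under the hyperelliptic involution and hence glues to a nowhere-vanishing section of $\omega_S$), and $\chi(S) = \chi(\IP^2) + \chi(D) - \chi(\bar D) = 1 + 1 - 0 = 2 > 1 = \chi(S^\nu)$. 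Your sign is also inverted --- ``a quotient of curves raises $\chi$'' says $\chi(D^\nu) \ge \chi(\bar D^\nu)$, the opposite of what you wrote --- and the conductor formula uses $D,\bar D$, not their normalisations. Your closing observation that literal triviality of $\omega_S$ rules out Enriques and bielliptic minimal models does dispose of the \emph{normal} case (a crepant resolution has trivial $\omega$, hence is K3 or abelian, so $q \in \{0,2\}$), but the non-normal case --- where $S^\nu$ can well be rational, as above --- is exactly where the argument is needed and where it collapses.

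The paper avoids normalisation altogether. Assuming $q(S) = 1$, one restricts $\Pic^0(S)$ to a general very ample curve $C \subset S$: generalised Kodaira vanishing gives $H^1(\OO_S(-C)) = 0$, hence $H^1(\OO_S) \hookrightarrow H^1(\OO_C)$ and $\Pic^0(S) \to \Pic^0(C)$ has finite kernel. Since $C$ is nodal, $\Pic^0(C)$ is quasi-abelian, so the $1$-dimensional image of $\Pic^0(S)$ is $\IC^*$ or an elliptic curve and in either case contains torsion of some order $m \ge 2$. The associated connected \'etale $m$-cover $Y \to S$ again has $K_Y = 0$, so $p_g(Y) = 1$; but $\chi(Y) = m\chi(S) = m$ forces $p_g(Y) = \chi(Y) + q(Y) - 1 \ge m \ge 2$, a contradiction.
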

\begin{proof} Since $p_g( S)=1$ because $K_{ S}=0$, we have $\chi( S)\le 2$ and it is enough to show that $\chi( S)=1$ cannot occur.
So assume by contradiction that $\chi( S)=1$, i.e.,  $q( S)=1$. 
Pick  a very ample line bundle $H$ on $ S$ and let $C\in |H|$ be a general curve. By the generalised Kodaira vanishing theorem (\cite[Prop. 3.1]{liu-rollenske14}),
 $h^1(\OO_{ S}(-C))=0$  and therefore the restriction sequence for $C$ induces an injection $H^1(\OO_{ S})\to H^1(\OO_C)$. It follows that the restriction map $\Pic^0( S)\to \Pic^0(C)$  has finite kernel. 
The curve  $C$, being general,  has nodal singularities, hence $\Pic^0(C)$ is a quasi-abelian variety, namely there is an exact sequence
$0\to (\IC^*)^m\to \Pic^0(C)\to A\to 0$, where $A$ is an abelian variety. So the image of $\Pic^0( S)$ in $\Pic^0(C)$ is isomorphic either to $\IC^*$ or to an elliptic curve, and therefore   $\Pic^0( S)$ contains a torsion  element $\zeta$ of order $m\ge 2$. Let $Y\to  S$ be the connected \'etale cover given by $\zeta$. Then $K_Y$ is trivial and $p_g(Y)=\chi(Y)+q(Y)-1\ge m+1-1\ge2$, a contradiction. 
\end{proof}
\begin{lem}\label{lem: types non-reduced} In the set-up of Lemma \ref{lem: chi}
 there are the following possibilites for the invariants, up to renumbering the components:
 \begin{center}
 \begin{tabular}{lccccc}
\toprule
type & $\chi(\tilde \Gamma_1)$ & $\chi(\tilde \Gamma_2)$ & $\chi(\tilde X_1)$ & $\chi(\tilde X_2)$ &$\chi(\Gamma)$ \\
\midrule
{DD} &1& 1& 2 & 2& 1 \\
{DE} & 1 & 0 & 2& 2& 0\\
\bottomrule
 \end{tabular}
 \end{center}
\end{lem}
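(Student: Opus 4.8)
The strategy is to combine the constraints already assembled in Lemmas \ref{lem: chi} and \ref{lem: K3-regular} with positivity. We know $K_{\tilde X_i}=0$ and, by Lemma \ref{lem: chi}, that the normalisations of $\tilde \Gamma_1$ and $\tilde \Gamma_2$ agree, in particular $\chi(\tilde\Gamma_1^\nu)=\chi(\tilde\Gamma_2^\nu)$; also $\Gamma$ and $\tilde\Gamma_i$ are irreducible curves. First I would pin down $\chi(\tilde X_i)$: each $\tilde X_i$ is an irreducible slc surface with trivial canonical class, and I claim $\chi(\tilde X_i)>0$. Indeed $\tilde X_i$ is the $S_2$-fication of $\bar X_i$ glued along part of $\bar D_i$; since $q(X)=0$ (Proposition \ref{prop: canonical section}(ii)) and the two components together carry $p_g(X)=2$ via the sections $(\sigma_1,0)$, $(0,\sigma_2)$ of Proposition \ref{prop: canonical section}(iii), each $\tilde X_i$ has $p_g\geq 1$, so $\chi(\tilde X_i)\geq 1$; combined with Lemma \ref{lem: K3-regular} this forces $\chi(\tilde X_1)=\chi(\tilde X_2)=2$. (One must check the mild point that $p_g(\tilde X_i)\neq 0$: the canonical section of $X$ supported on $X_i$ pulls back to a nonzero section of $\omega_{\tilde X_i}=\ko_{\tilde X_i}$, using that $\omega_{\tilde X_i}$ is trivial and $S_2$.)

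Next I would feed $\chi(X)=3$ and $\chi(\tilde X_1)=\chi(\tilde X_2)=2$ into Formula \eqref{eq: chi}, which then reads
\[
3 + \chi(\tilde\Gamma_1) + \chi(\tilde\Gamma_2) = 4 + \chi(\Gamma),
\quad\text{i.e.}\quad
\chi(\tilde\Gamma_1) + \chi(\tilde\Gamma_2) = 1 + \chi(\Gamma).
\]
Now I use that $\tilde\Gamma_i$ and $\Gamma$ are genuine (reduced, irreducible, $S_1$) curves, so $\chi(\tilde\Gamma_i)\leq 1$ and $\chi(\Gamma)\leq 1$, with equality exactly for rational curves with at worst the cohomology of a point; moreover $\Gamma$ dominates each $\tilde\Gamma_i$ (it is the image of $\tilde\Gamma_i$ in $X$ under the partial gluing), so $\chi(\Gamma)\geq \chi(\tilde\Gamma_i)$ for $i=1,2$ — a normalisation/partial-gluing map only decreases $\chi$. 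Combining $\chi(\Gamma)\geq \chi(\tilde\Gamma_i)$ with $\chi(\tilde\Gamma_1)+\chi(\tilde\Gamma_2)=1+\chi(\Gamma)$ gives $\chi(\Gamma)+\chi(\Gamma)\geq 1+\chi(\Gamma)$, hence $\chi(\Gamma)\geq 1$, so $\chi(\Gamma)=1$ is impossible only if... rather: $\chi(\Gamma)\in\{0,1\}$ forces $\chi(\Gamma)\geq 1$, so $\chi(\Gamma)=1$. Wait — then $\chi(\tilde\Gamma_1)+\chi(\tilde\Gamma_2)=2$ with each $\leq 1$, forcing both $=1$: that is type {DD}. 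To also produce type {DE} I must allow $\chi(\Gamma)=0$, which happens precisely when $\Gamma$ is not of point-like $\chi$; then the inequality $\chi(\Gamma)\geq\chi(\tilde\Gamma_i)$ still holds, giving $\chi(\tilde\Gamma_i)\leq 0$, and $\chi(\tilde\Gamma_1)+\chi(\tilde\Gamma_2)=1$ then forces, up to renumbering, $\chi(\tilde\Gamma_1)=1$, $\chi(\tilde\Gamma_2)=0$ — but $\chi(\tilde\Gamma_1)=1\leq\chi(\Gamma)=0$ is violated. So the monotonicity I want is not $\chi(\Gamma)\geq\chi(\tilde\Gamma_i)$ but something weaker, and the correct bookkeeping is: each $\tilde\Gamma_i$ maps \emph{onto} $\Gamma$, but this map is generically $1{:}1$ on exactly one component (the one carrying the canonical section) and the gluing identifies $\tilde\Gamma_1$ with $\tilde\Gamma_2$ via an involution on normalisations, so $\Gamma$'s normalisation equals $\tilde\Gamma_i^\nu$ and $\chi(\Gamma)\leq\min_i\chi(\tilde\Gamma_i)$ is false in general. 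I would therefore instead argue directly from $p_a$: write $\chi(\tilde\Gamma_i)=1-p_a(\tilde\Gamma_i)$ and $\chi(\Gamma)=1-p_a(\Gamma)$, use that $\Gamma$ is obtained from $\tilde\Gamma_1$ (or $\tilde\Gamma_2$) by gluing finitely many points, so $p_a(\Gamma)\geq p_a(\tilde\Gamma_i)$, and that the two curves $\tilde\Gamma_i$ have isomorphic normalisations hence equal geometric genus $g$. Then $p_a(\tilde\Gamma_i)\geq g$ and $p_a(\Gamma)\geq\max(p_a(\tilde\Gamma_1),p_a(\tilde\Gamma_2))$, while \eqref{eq: chi} becomes $p_a(\tilde\Gamma_1)+p_a(\tilde\Gamma_2)=p_a(\Gamma)$. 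A short case analysis in the non-negative integers — using $p_a(\tilde\Gamma_i)\leq p_a(\Gamma)$ and that $\bar\Gamma_i$, being a component with $2(K_{\bar X_i}+\bar D_i)\bar\Gamma_i=1$, is rational (so $g=0$, hence $\chi(\tilde\Gamma_i)\geq 0$ unless $\tilde\Gamma_i$ is a rational curve with nonreduced-looking but it's reduced so $\chi\leq 1$) — leaves exactly $(p_a(\tilde\Gamma_1),p_a(\tilde\Gamma_2),p_a(\Gamma))\in\{(0,0,0),(0,1,1)\}$, i.e.\ the two rows of the table.

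**Main obstacle.** The delicate point is the rationality/genus control of $\bar\Gamma_i$ and the precise monotonicity of arithmetic genus under the partial gluing $\tilde\Gamma_i \to \Gamma$; one must be careful that $\bar\Gamma_i$ is smooth rational (from $2(K_{\bar X_i}+\bar D_i)\cdot\bar\Gamma_i = 1$ and adjunction on a resolution) so that $\tilde\Gamma_i$ is a rational curve, whence $\chi(\tilde\Gamma_i)\in\{\ldots,-1,0,1\}$ with $\chi=1$ iff $\tilde\Gamma_i\cong\IP^1$, and that gluing $\tilde\Gamma_1\cong_\nu\tilde\Gamma_2$ to $\Gamma$ contributes exactly the expected drop in $\chi$. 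Everything else is the linear-algebra manipulation of \eqref{eq: chi} together with Lemma \ref{lem: K3-regular} and the already-established $q(X)=0$, $p_g(X)=2$.
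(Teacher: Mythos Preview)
Your plan has two genuine gaps.

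\textbf{Gap 1: the argument for $\chi(\tilde X_i)>0$ fails.} You write ``each $\tilde X_i$ has $p_g\ge 1$, so $\chi(\tilde X_i)\ge 1$'', but since $K_{\tilde X_i}=0$ and $\tilde X_i$ is connected, $p_g(\tilde X_i)=h^0(\ko_{\tilde X_i})=1$ automatically, and $\chi(\tilde X_i)=2-q(\tilde X_i)$. Your observation about the sections $(\sigma_1,0),(0,\sigma_2)$ only reconfirms $p_g=1$; it says nothing about $q(\tilde X_i)$, and $q(X)=0$ does not directly control $q(\tilde X_i)$. The paper sidesteps this completely: it first bounds the \emph{curve} invariants, using $\chi(\tilde\Gamma_i),\chi(\Gamma)\in\{0,1\}$ together with the monotonicity $\chi(\Gamma)\le\chi(\tilde\Gamma_i)$ (your $p_a(\Gamma)\ge p_a(\tilde\Gamma_i)$, once you got the direction right), and then feeds these into \eqref{eq: chi} to get $\chi(\tilde X_1)+\chi(\tilde X_2)\ge 3$. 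Since trivially $\chi(\tilde X_i)\le 2$, Lemma~\ref{lem: K3-regular} (which rules out $\chi=1$) then forces both to equal $2$ with no a priori positivity needed.

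\textbf{Gap 2: the bound $p_a(\tilde\Gamma_i)\le 1$ is not established.} You correctly note that $\tilde\Gamma_i^\nu\cong\IP^1$, hence $p_a(\tilde\Gamma_i)\ge 0$; but a rational curve can have many nodes, so your equation $p_a(\tilde\Gamma_1)+p_a(\tilde\Gamma_2)=p_a(\Gamma)$ together with $p_a(\Gamma)\ge p_a(\tilde\Gamma_i)$ does not by itself exclude, e.g., $(1,1,2)$ or $(0,2,2)$. Your parenthetical ``rational \dots\ so $\chi\le 1$'' is the wrong inequality for what you need. The paper takes $\chi(\tilde\Gamma_i),\chi(\Gamma)\in\{0,1\}$ as input; the underlying reason is adjunction on the slc pair $(\tilde X_i,\tilde\Gamma_i)$: the different on $\tilde\Gamma_i^\nu\cong\IP^1$ has total degree $\tilde\Gamma_i^2-\deg K_{\IP^1}=\tfrac12+2=\tfrac52$, and every self-node of $\tilde\Gamma_i$ contributes coefficient $1$ at each of its two preimages, hence at most one node and $p_a(\tilde\Gamma_i)\le 1$.

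In short, your order of attack (surfaces first, then curves) gets stuck at both stages; the paper goes curves first, then surfaces, and this is what lets Lemma~\ref{lem: K3-regular} do the work.
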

\begin{proof}
Recall that $\Gamma$ and the $\tilde\Gamma_i$ are irreducible with $\chi=0$ or $1$. Since the maps $\tilde \Gamma_i\to \Gamma$ are birational, we also have $\chi(\Gamma)\le \chi(\tilde\Gamma_i)$, $i=1,2$. So if $\chi(\Gamma)=1$ we get $\chi(X_1)+\chi(X_2)=4$ by \eqref{eq: chi} and therefore $\chi(\tilde X_1)=\chi(\tilde X_2)=2$ by Lemma \ref{lem: K3-regular}. If $\chi(\Gamma)=0$ then \eqref{eq: chi} gives $\chi(\tilde X_1)+\chi(\tilde X_2)\ge 3$ and so again $\chi(\tilde X_1)=\chi(\tilde X_2)=2$ by Lemma  \ref{lem: K3-regular}
\end{proof}
  \subsection{Identifying components}
Next we classify the possible components. The first one is a double plane branched over a sextic. 
\begin{prop}[type D]\label{prop: type D}
 Let $( S , { \Delta})$ be an irreducible stable log surface  pair with the following properties:
 \[ K_{ S }= 0,  \  { \Delta}^2 = \frac12,  \  2{ \Delta} \text{ Cartier}, \  p_a({ \Delta}) = 0, \chi({ S })=2.\]
  Then $|2{ \Delta}|$ induces a double cover $f\colon { S } \to \IP^2$ branched over a sextic $L+B_5$, where $L$ is a line  such that $f^*L = 2{ \Delta}$ and $(\IP^2, \frac12(L+B_5))$ is lc.

For a general choice of $B_5$, the surface  $ S$ is a singular  K3 surface with five $A_1$ singularities.
  
Moreover,  we have 
  \[R( S, \Delta ) \isom \IC[x, y_1, y_2, z]/\left(z^2 - f_{10}(x, y_1, y_2)\right)\]
  with $\deg (x, y_1, y_2, z) = (1,2,2,5)$.
\end{prop}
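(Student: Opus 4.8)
The plan is to exploit the numerical data $K_S=0$, $\Delta^2=\tfrac12$, $2\Delta$ Cartier, $p_a(\Delta)=0$ and $\chi(S)=2$ to pin down the linear systems $|2\Delta|$ and $|4\Delta|$, thereby producing the double cover to $\IP^2$, and then to read off the half-canonical ring $R(S,\Delta)$ by a weighted-projective/adjunction argument. I would proceed in four steps.

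First I would establish the cohomological bookkeeping on $S$. Since $2\Delta$ is Cartier and ample (it meets the ample $2\Delta$ positively, and $K_S=0$ makes $S$ a ``$K3$-like'' surface), Kodaira vanishing in the generalised form for slc surfaces (as quoted from \cite{liu-rollenske14}) gives $h^i(S,\ko_S(2m\Delta))=0$ for $i>0$, $m\ge 1$, and $h^1(\ko_S)=0$ because $\chi(S)=2$ forces $q(S)=0$. Riemann--Roch for the Cartier divisor $2\Delta$ on the Gorenstein surface $S$ (with $K_S=0$, $\chi(\ko_S)=2$) then yields $\chi(\ko_S(2\Delta)) = \tfrac12(2\Delta)(2\Delta) + \chi(\ko_S) = \tfrac12\cdot 2 + 2 = 3$, so $h^0(S,2\Delta)=3$. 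Similarly $h^0(S,4\Delta)= \tfrac12\cdot 8 + 2 = 6$ and $h^0(S,10\Delta)=\tfrac12\cdot 50+2 = 27$, matching the Hilbert function of a $z^2=f_{10}$ hypersurface in $\IP(1,2,2,5)$. I would also record $h^0(S,\Delta)=1$ (half-canonical, following from $\chi(\Delta)$ and $p_a(\Delta)=0$ via the restriction sequence $0\to \ko_S\to \ko_S(\Delta)\to \ko_\Delta(\Delta)\to 0$ together with $\deg_\Delta(2\Delta)=1$, so $\deg_\Delta\Delta = \tfrac12$ and $\Delta$ is half of a degree-one point).

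Second, I would construct the map. The three-dimensional system $|2\Delta|$ defines a morphism $f\colon S\to \IP^2$ (base-point-freeness follows since $2\Delta$ is ample Cartier and, if needed, one replaces it by a multiple or checks directly that a pencil inside $|2\Delta|$ has no base points using $(2\Delta)^2=2$). Because $(2\Delta)^2=2$, $f$ is generically finite of degree $2$; it is a finite double cover onto $\IP^2$ since $2\Delta$ is ample. Then $f_*\ko_S = \ko_{\IP^2}\oplus \ko_{\IP^2}(-a)$ with $2a$ the branch degree; computing $\chi(S) = \chi(\ko_{\IP^2}) + \chi(\ko_{\IP^2}(-a)) = 1 + \binom{a-1}{2} - \text{(correction)}$, i.e.\ $2 = 1 + \tfrac12(a-1)(a-2)$ forces $a=3$, so the branch divisor has degree $6$. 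The fixed part of the half-canonical $|\Delta|$ (it has a unique section) gives a curve $\Delta$ with $f^*(\text{line}) \sim 2\Delta$ for a suitable line $L$; so $L$ is a component of the branch sextic, $B = L + B_5$ with $\deg B_5 = 5$. That $(\IP^2,\tfrac12(L+B_5))$ is lc is the translation of the condition that $S$ has slc singularities, by the standard correspondence between double covers and log pairs (e.g.\ \cite{KollarSMMP}); here the main obstacle is to be careful that $2\Delta$ Cartier and $\Delta^2=\tfrac12$ are exactly consistent with $f^*L=2\Delta$ and that $\Delta$ (rather than a multiple) maps to $L$ — this is where the number $\tfrac12$ does its work.

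Third, the generic smoothness statement: for general $B_5$, the sextic $L+B_5$ has only nodes, all lying on $L$ (five of them, at $L\cap B_5$), so $S$ acquires precisely five $A_1$ singularities and is otherwise smooth; since $K_S=0$, $q=0$, $p_g=1$, it is a K3 surface with five nodes. Finally, for the canonical ring: the three sections of $2\Delta$ together with the unique section $x\in H^0(\Delta)$ and a section $z\in H^0(5\Delta)$ (which exists since $h^0(5\Delta) = \tfrac12\cdot 12.5$... more precisely one checks $h^0(5\Delta)$ by Riemann--Roch after noting $5\Delta$ is not Cartier, so instead one uses the filtration by $x$: $h^0(4m\Delta), h^0((4m+1)\Delta),\dots$ computed inductively) generate $R(S,\Delta)$, and the single relation is the defining equation $z^2 = f_{10}(x,y_1,y_2)$ of the double cover, with $f_{10}$ pulled back from the equation $\ell\cdot b_5$ of the branch divisor expressed in the coordinates $x^2, y_1, y_2$ on $\IP(1,2,2)\to\IP^2$. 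This identifies $S$ with the hypersurface $(z^2=f_{10})\subset \IP(1,2,2,5)$ and gives the stated ring. I expect the genuinely delicate point to be Step two — verifying base-point-freeness of $|2\Delta|$ on a possibly non-normal slc surface and nailing the precise relation $f^*L = 2\Delta$ with its lc translation — rather than the essentially formal cohomology computations of Steps one and four.
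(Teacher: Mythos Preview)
Your overall strategy matches the paper's: compute $h^0(2\Delta)=3$ by Riemann--Roch and vanishing, realise $|2\Delta|$ as a degree-two map to $\IP^2$, identify $\Delta$ with a line in the branch sextic, and then read off the section ring. The odd-degree plurigenera and the generator $z$ in degree $5$ are also handled in the paper exactly as you sketch, via the restriction sequence to $\Delta$ (which gives $h^0(m\Delta)=1+\tfrac14(m^2-1)$ for $m$ odd).

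The one genuine gap is your argument for base-point-freeness of $|2\Delta|$. Neither ``replace by a multiple'' nor ``a pencil has no base points because $(2\Delta)^2=2$'' proves what you need: the first would no longer give a map defined by $|2\Delta|$ itself, and the second is simply not valid (an ample Cartier divisor of square $2$ on an slc surface can perfectly well have a base point). The paper's fix is short and you already have all the ingredients for it: from the restriction sequence
\[
0\to H^0(S,\Delta)\to H^0(S,2\Delta)\to H^0(\Delta,\ko_\Delta(2\Delta))\to 0
\]
(exact on the right by Kodaira vanishing for $H^1(\Delta)$), the system $|2\Delta|$ surjects onto the complete linear system of the degree-one line bundle $\ko_\Delta(2\Delta)$ on $\Delta\isom\IP^1$, which is base-point-free. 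Hence $|2\Delta|$ has no base points on $\Delta$, and therefore none at all. This same restriction also shows directly that $\Delta$ maps isomorphically onto a line $L$, whence $f^*L=2\Delta$ and $L$ lies in the branch locus; the rest (branch degree $6$ and the lc condition) then follows from the Hurwitz formula as in \cite{alexeev-pardini12}, in agreement with your Euler-characteristic computation.
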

\begin{proof}
Riemann--Roch and generalised Kodaira vanishing give $h^0(2{ \Delta})=\chi(2{ \Delta})=3$. 
 The restriction sequence  together with Kodaira vanishing gives an exact sequence
 \[  0\to H^0({S}, { \Delta}) \to H^0({S}, 2{ \Delta}) \to H^0({ \Delta}, \ko_{ \Delta}(2{ \Delta})) \to 0\]
 Since ${ \Delta}\isom \IP^1$ we see that $|2{ \Delta}|$ has no base points, because it has no base points on ${ \Delta}$.

 Since ${ \Delta}$ is ample and $(2{ \Delta})^2 = 2$,  the system $|2{ \Delta} |$ induces  a finite map of degree two $ {S} \to \IP^2$ that maps 
  ${ \Delta}$ isomorphically to a line $L$, which is necessarily a component of the branch locus. The total degree of the branch locus and the fact that $(\IP^2, \frac12(L+B_5))$ is  lc follow from the Hurwitz formula (cf. \cite[Prop.~2.5]{alexeev-pardini12}).

  Concerning the ring $R(S, \Delta)$, we can write   $H^0(S, \Delta)= \langle  x \rangle $  and  $H^0(S, 2 \Delta)= \langle  x^2, y_1,y_2 \rangle $. 
  We see that  $h^0(S,m\Delta) = 2 +\frac14 m^2$ if $m$  is even  and  restricting to $\Delta$ (which has genus  0),  we get $h^0(S,m\Delta) =1+\frac14 (m^2  -1 )  $ if $m$ is odd. 
  
  So we need  a further generator $z$ in degree 5, and we see that the first relations are in degree 10 and we can assume  that it is of the form  $z^2 =f _{10}(x, y_1, y_2)$, then 
  considering the map $\IC[x, y_1, y_2, z]/\left(z^2 - f_{10}(x, y_1, y_2)\right) \to R(S,\Delta)$ and looking at the Hilbert series we can conclude that they are isomorphic. 
  
 The double cover 
 $S\to\pp^2\cong\pp(1,2,2)$ corresponds to the truncation in degree 2 and  its branch locus consists of  the  curve $B_5$ defined by $\{f_{10}(x, y_1,y_2)=0\}$ together with the 
  distinguished line $L$ defined by $\{x=0\}$.
  \end{proof}

\begin{prop}[type E]\label{prop: type E}
 Let $(S , \Delta)$ be an irreducible stable log surface with $\Delta$ irreducible and the following properties:
 \[ K_S =0, \ \Delta^2 = \frac12,  \ 2\Delta \text{ Cartier},  \ p_a(\Delta) = 1, \ \chi(S ) = 2.  \]
 Then $|4\Delta|$ induces a double cover $f\colon S  \to \kc_4 \subset \IP^5$, where $\kc_4$ is the cone over the rational normal curve of degree four. The morphism $f$ is branched over the vertex and a cubic section $B$ such that $(\kc_4, \frac 12 B)$ is lc.
 
  For general choice of $B$, we get a singular elliptic  K3 surface with one $A_1$ singularity over the vertex of the cone, which is also a base point for the elliptic pencil $|\Delta|$ (compare Construction \ref{constr: X2}). 
 
 Moreover,  we have 
  \[R(S, \Delta) \isom \IC[u_0, u_1, v, w]/\left(w^2 - g_{12}(u_0,u_1,v)\right)\]
  with $\deg (u_0, u_1, v, w) = (1,1,4,6)$.
\end{prop}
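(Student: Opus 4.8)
The plan is to imitate the proof of Proposition~\ref{prop: type D}, with the cone $\kc_4\isom\IP(1,1,4)$ embedded in $\IP^5$ by $\OO_{\IP(1,1,4)}(4)$ playing the role of $\IP^2=\IP(1,1,2)$. From $K_S=0$ and $\chi(S)=2$ one gets $p_g(S)=1$ and $q(S)=0$. As $2\Delta$ is ample Cartier with $(2\Delta)^2=2$, Riemann--Roch and generalised Kodaira vanishing \cite[Prop.~3.1]{liu-rollenske14} give $h^0(2k\Delta)=\chi(2k\Delta)=2+k^2$ for $k\ge1$, so in particular $h^0(4\Delta)=6$. For the odd multiples I would twist the restriction sequence $0\to\OO_S(-\Delta)\to\OO_S\to\OO_\Delta\to0$ by the even (hence Cartier) powers $\OO_S(2k\Delta)$: the restriction $\OO_S(2k\Delta)|_\Delta$ is a degree~$k$ line bundle on the integral arithmetic genus one curve $\Delta$, with $h^0=k$ and $h^1=0$ for $k\ge1$ (a torsion free rank one sheaf of negative degree on an integral curve has no sections), which yields $h^0((2k-1)\Delta)=k^2-k+2$; in particular $h^0(\Delta)=2$. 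Collecting all multiples, the Hilbert series of $R(S,\Delta)$ equals $\frac{1-t^{12}}{(1-t)^2(1-t^4)(1-t^6)}$.

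Next I would produce the double cover. The pencil $|\Delta|$ has a single base point $p$, which must be a singular point of $S$ since $\Delta^2=\tfrac12\notin\IZ$. If $u_0,u_1$ is a basis of $H^0(\Delta)$, the quartic monomials $u_0^4,\dots,u_1^4\in H^0(4\Delta)$ show $\mathrm{Bs}\,|4\Delta|\subseteq\{p\}$, and restricting $|4\Delta|$ to $\Delta$ (which contains $p$), using $H^0(S,4\Delta)\onto H^0(\Delta,\OO_S(4\Delta)|_\Delta)$ from Kodaira vanishing together with the base point freeness of a degree two linear system on an integral genus one curve (as in Proposition~\ref{prop: type D}), shows $|4\Delta|$ is in fact base point free; let $f\colon S\to\IP^5$ be the resulting morphism. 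Projection of $\IP^5$ away from the point $[v]$, where $v$ completes $u_0^4,\dots,u_1^4$ to a basis of $H^0(4\Delta)$, composes $f$ with the rational map defined by $|\Delta|$ followed by the $4$-uple embedding, so $f(S)$ is contained in the cone $\kc_4$ over the rational normal quartic; since $h^0(4\Delta)=6>5$ the section $v$ is not a polynomial in $u_0,u_1$, hence $f$ is generically finite and $f(S)=\kc_4$. Finally $(4\Delta)^2=8=\deg\kc_4\cdot\deg f=4\deg f$ gives $\deg f=2$.

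Then the branch locus and the ring. Write $f_*\OO_S=\OO_{\kc_4}\oplus\mathcal M^{-1}$ with $\mathcal M$ reflexive of rank one. The double cover formula $\omega_S=f^*(\omega_{\kc_4}\otimes\mathcal M)$ and $K_S=0$ force $\mathcal M=\omega_{\kc_4}^{[-1]}=\OO_{\IP(1,1,4)}(6)$, so the honest branch divisor is a member $B$ of $|\mathcal M^{[2]}|=|\OO_{\IP(1,1,4)}(12)|=|\OO_{\kc_4}(3)|$, i.e.\ a cubic section of $\kc_4\subset\IP^5$; since $\mathcal M$ is not Cartier at the vertex, $f$ is in addition ramified over the vertex, which is the meaning of the assertion. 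That $(\kc_4,\tfrac12 B)$ is lc follows from $S$ being slc by \cite[Prop.~2.5]{alexeev-pardini12}. For general $B$ the curve is smooth and avoids the vertex, so $S$ is smooth away from the $\tfrac14(1,1)$-point of $\kc_4$, over which a local computation of the $\mu_4$-quotient of the double cover leaves exactly one $A_1$-point; a normal surface with Du~Val singularities, $K_S=0$ and $\chi(S)=2$ is a singular K3, and $|\Delta|=f^*|\text{ruling lines of }\kc_4|$ is an elliptic pencil with base point this $A_1$-point, as in Construction~\ref{constr: X2}. For the canonical ring, $f_*\OO_S=\OO_{\IP(1,1,4)}\oplus\OO_{\IP(1,1,4)}(-6)$ gives $R(S,\Delta)=\IC[u_0,u_1,v]\oplus\IC[u_0,u_1,v]\cdot w$ with $w$ in degree $6$ and $w^2=g_{12}(u_0,u_1,v)$ encoding the algebra structure; comparing with the Hilbert series above shows $R(S,\Delta)\isom\IC[u_0,u_1,v,w]/(w^2-g_{12})$ with the stated weights.

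The cohomology bookkeeping in the first step is routine. The delicate point is the identification of $f$ and of the branch data: one must rule out that $f$ collapses $S$ to a curve or maps it onto a plane --- handled by the existence of the extra degree four generator $v$ --- and, more importantly, keep careful track of the \emph{reflexive} sheaf $\mathcal M$, which is non-Cartier at the vertex, so as to see simultaneously that the ordinary branch divisor is just the cubic section $B$, that there is additional ramification over the vertex, and that $S$ acquires precisely one $A_1$-singularity there. If only the canonical ring is wanted, the double cover presentation in the last step together with the Hilbert function is the quickest route, and the finer geometric statements are needed only for the singular K3 description.
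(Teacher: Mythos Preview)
Your proof is correct and follows essentially the same strategy as the paper: compute the plurigenera via Riemann--Roch, Kodaira vanishing and restriction to $\Delta$; show $|4\Delta|$ is base-point-free by restricting to the genus-one curve $\Delta$; identify the image as the cone $\kc_4$ and the degree as two; read off branch data and the section ring. The only differences are stylistic: you pin down the image by projecting from $[v]$ and observing that what remains is the $4$-uple of the pencil map (so $f(S)\subset\kc_4$, with equality forced by ampleness), whereas the paper argues geometrically that each $\Delta'\in|\Delta|$ maps $2:1$ to a line through the common point $\varphi(p)$, so the image is a ruled cone of minimal degree; and you obtain the branch divisor from the splitting $f_*\OO_S=\OO\oplus\mathcal M^{-1}$ and the duality formula, while the paper reads it off from the Hurwitz formula applied to the pencil. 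Both routes land on $|\OO_{\kc_4}(3)|$ plus the vertex and the same ring presentation, so there is nothing to correct.
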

\begin{proof}
Riemann--Roch and generalised Kodaira vanishing give $h^0(2\Delta)=\chi(2\Delta)=3$.
The restriction sequence  together with Kodaira vanishing gives an exact sequence
 \[  0\to H^0(S , \Delta) \to H^0(S , 2\Delta) \to H^0(\Delta, \ko_\Delta(2\Delta)) \to 0.\]
Since $h^0(\Delta, \ko_\Delta(2\Delta))=1$, we get $h^0(\Delta)=2$. All the curves of  $|\Delta|$ are irreducible and contain the point $p$ defined by  $\OO_{\Delta}(2\Delta)=\OO_{\Delta}(p)$. 

 Similarly, one has  $h^0(4\Delta)=6$ and an exact sequence
\[  0\to H^0(S , 3\Delta) \to H^0(S , 4\Delta) \to H^0(\Delta, \ko_\Delta(4\Delta)) \to 0\]
 Since $p_a(\Delta) = 1$ and $4 \Delta^2 = 2$, we see that $|4\Delta|$ has no base points, because it has no base points on $\Delta$. Its restriction to the curves of $|\Delta|$ defines a double cover $\Delta \to \IP^1$, so the image $\Sigma$ of the morphism $\varphi$ given by $|4\Delta|$ is ruled by lines and $\deg \varphi\ge 2$. More precisely,  $\Sigma$ is a cone, since the images of the curves of $\Delta$ all go through the point $\varphi(p)$. Since $8=(4 \Delta)^2 = \deg \varphi\deg \Sigma\ge 2\cdot 4$, we have $\deg\varphi=2$ and $\Sigma$ is the  cone $\kc_4$ over the rational normal curve of degree 4. For every $\Delta'\in |\Delta|$ we see that $|4\Delta|\restr{\Delta'}$ contains $2p$, so $\varphi$ is branched over $p$. Finally the Hurwitz formula shows that $\varphi$ is branched on a divisor
 of $|-2K_{\kc_4}|=|\OO_{\kc_4}(3)|$.
  
  Concerning the ring $R(S, \Delta)$, we can write   $H^0(S, \Delta)= \langle  u_0,u_1 \rangle $  and  
  restricting to $\Delta$ we see that we need  further generators $v, w$ in degree 4, resp. 6.  
 The double cover 
 $\ S\to\kc_4\cong\pp(1,1,4)$ corresponds to the truncation in degree 4 and  its branch locus consists of the vertex of the cone and  the  curve $B=\{g_{12}(u_0,u_1,v)=0\}$.
  This  corresponds to the  only relation   we need.   
\end{proof}

 \section{I-surfaces of type {DD}}\label{sect: DD}
 The surfaces of type {DD} were already described in \cite[Example 4.7]{FPR17a}, so we can be brief here.
 Let  $\gothR^{{DD}}$ be the closure of the locus of such surfaces in the moduli space. 
 \begin{thm}\label{thm: DD}
  Let $X$ be a 2-Gorenstein I-surface such that  every canonical curve  is non-reduced and the $S_2$-fication of both components is of  type D (compare Lemma \ref{lem: types non-reduced}).
  
  Then $X$ is a complete intersection of degree  $(2,10)$ in $\IP(1,1,2,2,5)$ and is  smoothable.
   More precisely 
  \[ R(X, K_X) \isom \IC[x_1, x_2, y_1, y_2, z]/(x_1x_2, z^2 - f_{10}(x_1, x_2, y_1, y_2)).\]
 The set $\gothR_{{DD}}$ is irreducible of dimension 26 and satisfies
 \[ \gothR_{{DD}} \subset \gothD_A \subset \overline\gothM_{1,3}^\text{class}.\]
 \end{thm}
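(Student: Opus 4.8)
The plan is to run the argument of Proposition \ref{prop: type D} on each of the two components simultaneously and then glue. By Lemma \ref{lem: types non-reduced}, in the {DD} case the $S_2$-fications $\tilde X_1, \tilde X_2$ are both stable log surfaces of type D, so Proposition \ref{prop: type D} applies to each: $(\tilde X_i, \tilde\Gamma_i)$ has $K_{\tilde X_i}=0$, $\tilde\Gamma_i\cong\IP^1$, and $R(\tilde X_i,\tilde\Gamma_i)\cong\IC[x_i,y_1^{(i)},y_2^{(i)},z^{(i)}]/(z^{(i)2}-f^{(i)}_{10})$. Since $\chi(\tilde\Gamma_i)=1$ the curve $\tilde\Gamma_i$ is a smooth rational curve; the gluing involution identifies the normalisations of $\tilde\Gamma_1$ and $\tilde\Gamma_2$ (Lemma \ref{lem: chi}), and since $\Gamma$ has $\chi(\Gamma)=1$ it is itself $\IP^1$, so in fact $\tilde\Gamma_1\xrightarrow{\sim}\Gamma\xleftarrow{\sim}\tilde\Gamma_2$, i.e.\ the two type-D components meet transversely along a common smooth rational curve and $X=\tilde X_1\cup_\Gamma\tilde X_2$ with no further non-normal locus ($\tilde Z_i$ empty, so $\tilde X_i=\bar X_i$ after contracting the $A_1$'s, and the components are already $S_2$). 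The first real step, then, is this identification of the gluing data.

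Next I would assemble the canonical ring of $X$ from the restriction sequences \eqref{seq 0}–\eqref{seq 2}, or more directly from the Mayer--Vietoris sequence $0\to\OO_X\to\OO_{\tilde X_1}\oplus\OO_{\tilde X_2}\to\OO_\Gamma\to 0$ twisted by $\omega_X^{[m]}$, together with $q(X)=0$ and generalised Kodaira vanishing. On each component the canonical ring restricts to the type-D ring of Proposition \ref{prop: type D}, and $\omega_X$ restricted to $\tilde X_i$ equals $\omega_{\tilde X_i}(\tilde\Gamma_i)=\OO(\tilde\Gamma_i)$, which is the polarisation $\Delta_i$ there; the generators $x_1$ of $H^0(\tilde X_1,\Delta_1)$ and $x_2$ of $H^0(\tilde X_2,\Delta_2)$ each restrict to zero on the other component (by Proposition \ref{prop: canonical section}(iii), the canonical curve is supported on $\Gamma$, and $x_i$ vanishes on $X_{3-i}$). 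Matching the two degree-2 spaces along $\Gamma$ identifies a common pair $y_1,y_2$ and a common degree-1 coordinate $x_0$ on $\Gamma$, and the degree-5 generators $z^{(1)},z^{(2)}$ glue to a single $z$. Comparing Hilbert series — the I-surface Hilbert series $\frac{1-t^{10}}{(1-t)^2(1-t^2)(1-t^5)}$ equals the Hilbert series of $\IC[x_1,x_2,y_1,y_2,z]/(x_1x_2,z^2-f_{10})$, a complete intersection of degrees $2$ and $10$ in $\IP(1,1,2,2,5)$ — forces the surjection $\IC[x_1,x_2,y_1,y_2,z]/(x_1x_2,z^2-f_{10})\onto R(X,K_X)$ to be an isomorphism, with $x_1x_2$ cutting out the reducible locus. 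This also re-derives the complete-intersection description; alternatively one simply quotes \cite[Example 4.7]{FPR17a}.

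For the last two assertions: smoothability follows because the complete intersection $\{x_1x_2=0,\ z^2=f_{10}\}\subset\IP(1,1,2,2,5)$ is the central fibre of the obvious pencil $\{x_1x_2=\lambda x_0^2,\ z^2=f_{10}\}$, whose general member is a smooth I-surface (a degree-10 hypersurface in $\IP(1,1,2,5)$ after eliminating one variable), giving a $\IQ$-Gorenstein smoothing as in the proof of Proposition \ref{prop: locating divisor B}. Irreducibility and $\dim\gothR_{DD}=26$ come from a parameter count: $\gothR_{DD}$ is dominated by the space of forms $f_{10}$ of degree $10$ in $\IP(1,1,2,2,5)$ (modulo the automorphisms preserving the pair of planes $x_1x_2=0$), which is a linear system, so $\gothR_{DD}$ is irreducible; that $\dim\gothR_{DD}=26$ can be read off either from the parameter count or from Table \ref{tab: strata}. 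Finally $\gothR_{DD}\subset\gothD_A$: setting $\lambda\neq0$ and eliminating, the general member of the smoothing is a degree-10 hypersurface in $\IP(1,1,2,5)$ of the type parametrised by $\gothD_A$ in Theorem \ref{thm: surface-A} — in fact the whole family sits inside the $\gothD_A$-family of semi-log-canonical hypersurfaces in $\IP(1,1,2,5)$ — and since $\gothD_A\subset\overline\gothM_{1,3}^\text{class}$ by Theorem \ref{thm: surface-A}(2), the chain of inclusions follows.

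The main obstacle is the bookkeeping in the gluing step: one must check that the two type-D structures coming from Proposition \ref{prop: type D} are compatible along $\Gamma$ — i.e.\ that the restriction maps $R(\tilde X_i,\Delta_i)\to R(\Gamma,\OO_\Gamma(p))$ land in the same half-canonical ring of $\Gamma$ and that the degree-2 and degree-5 pieces can be simultaneously normalised — so that Mayer--Vietoris really produces the stated fibre-product ring. Everything else is either a Hilbert-series comparison, a dimension count, or a citation.
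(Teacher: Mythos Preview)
Your first two steps --- identifying the gluing data and assembling the canonical ring via Mayer--Vietoris --- are fine and essentially what the paper does, only spelled out in more detail than the paper's ``easy to check''. The Hilbert-series comparison is a clean way to finish.

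There is, however, a genuine gap in your last step. First a minor point: your pencil $\{x_1x_2=\lambda x_0^2\}$ invokes a variable $x_0$ that does not exist in $\IP(1_{x_1},1_{x_2},2_{y_1},2_{y_2},5_z)$; you need a degree-$2$ form in the available variables. More seriously, a \emph{generic} degree-$2$ deformation $x_1x_2=\lambda q_2$ will, after eliminating a variable, produce a general degree-$10$ hypersurface in $\IP(1,1,2,5)$ \emph{not} passing through the vertex $(0{:}0{:}1{:}0)$. Such a surface is Gorenstein and lies in the open Gieseker locus, not in $\gothD_A$. So your argument establishes $\gothR_{DD}\subset\overline\gothM_{1,3}^{\text{class}}$ but not $\gothR_{DD}\subset\gothD_A$; you appear to be conflating ``type~A'' (all slc hypersurfaces in $\IP(1,1,2,5)$) with the divisor $\gothD_A$ (those passing through the $\tfrac14(1,1)$-point).

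The paper supplies the missing idea: write $f_{10}\equiv g(y_1,y_2)\pmod{(x_1,x_2)}$, change coordinates so that $y_1\mid g$, and then deform along $x_1x_2=\lambda y_1$. For $\lambda\neq 0$ one substitutes $y_1=x_1x_2/\lambda$ and obtains a hypersurface in $\IP(1_{x_1},1_{x_2},2_{y_2},5_z)$ whose equation vanishes at $x_1=x_2=0$ precisely because $y_1\mid g$; hence every nearby fibre passes through $(0{:}0{:}1{:}0)$ and lies in $\gothD_A$. This specific choice of deformation direction is the point you are missing.
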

\begin{proof}
 It is easy to check, that the linear system $|2K_X|$ defines a double cover of the union of two planes in $\IP^3$ branched, as in the smooth case,  over a quintic section.
 
 These surfaces give a $26$-dimensional locus inside the moduli space: 
 the linear system of quintics in the two planes that match on the intersection line is of dimension $35$ and the automorphism group of  the union of two planes in $\IP^3$  has dimension $9$. 
 (cf. \cite[Example 4.7]{FPR17a}). 
 
 To show that $\gothR_{{DD}}\subset \gothD_A$, we write in the equation
 \[ f_{10} \equiv  g(y_1, y_2) \mod (x_1, x_2).\]

 We may assume after a coordinate change that  $y_1$ is a factor of $g$. Considering the family 
 $\kx/\IA^1_\lambda$ given by equations
 \[ x_1x_2 - \lambda y_1= z^2 - f_{10}=0\]
 we see that the general fibre is in $\gothD_A$ by Theorem \ref{thm: surface-A}. 
 \end{proof}
 
 \section{I-surfaces of type {DE}}\label{sect: DE}
We now treat the case of 2-Gorenstein I-surfaces with only non-reduced canonical curves  of type {DE} defined in Lemma \ref{lem: types non-reduced}. Let us consider the set of these surfaces
 \[ \gothU^{{DE}} = \{[X] \in \overline\gothM_{1,3} \mid \text{$X$ of type {DE}}\}.\]
in the moduli space of stable surfaces.

The results of this section are summed up in the following result, where we relegate the precise geometric and algebraic descriptions to the subsequent subsections.
\begin{thm}\label{thm: DE}
Every surface $X = X_1 \cup X_2$ of type {DE} is glued from two particular singular K3 surfaces along a nodal  rational curve. It is canonically embedded in $\pp(1,1,2,2,3,4,5,6,7)$ and its ideal is as in \eqref{eq: ideal DE}.
 
 The subset $\gothU^{{DE}}$ is irreducible of dimension $30$ and does not intersect the closure of the Gieseker component. 
Its closure 
\[ \overline\gothM^{{DE}}  = \overline{\gothU^{{DE}}}\subset \overline \gothM_{1,3}\]
 is an irreducible component of the moduli space. 
 \end{thm}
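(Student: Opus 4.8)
The plan is to prove Theorem~\ref{thm: DE} by first establishing the structure of each component $X_i$ via the numerical classification already in hand, then performing a parameter count, and finally ruling out smoothability by a Hodge-theoretic or deformation-theoretic argument.

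\textbf{Step 1: Identify the components.} Starting from Lemma~\ref{lem: types non-reduced}, the type {DE} case has $\chi(\tilde X_1) = \chi(\tilde X_2) = 2$ with $\chi(\tilde \Gamma_1) = 1$, $\chi(\tilde \Gamma_2) = 0$, $\chi(\Gamma) = 0$. Since $K_{\tilde X_i} = 0$ by Lemma~\ref{lem: chi} and each $(\tilde X_i, \tilde \Gamma_i)$ is a stable log surface with $\tilde\Gamma_i^2 = \tfrac12$ and $2\tilde\Gamma_i$ Cartier, I would check that one component satisfies the hypotheses of Proposition~\ref{prop: type D} (the $p_a = 0$, $\chi = 2$ case: a double plane branched over a sextic $L + B_5$) and the other satisfies Proposition~\ref{prop: type E} (the $p_a = 1$, $\chi = 2$ case: a double cover of the cone $\kc_4$). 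The identification $p_a(\tilde\Gamma_1) = \chi$-computation should match: $\chi(\tilde\Gamma_1) = 1 \Leftrightarrow p_a = 0$ for the type-D component (after noting $Z_1$ may be nonzero and controlling it), and $\chi(\tilde\Gamma_2) = 0 \Leftrightarrow p_a = 1$ for the type-E component. The remaining subtlety is to show that the non-normal loci $\tilde Z_i$ are empty (so the components are already normal), or to incorporate them; I expect $\chi(\Gamma) = 0$ together with $\chi(\tilde\Gamma_i) \ge \chi(\Gamma)$ forces the glued curve to be a nodal rational curve and the self-glued parts $\tilde Z_i$ to be trivial or easily described.

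\textbf{Step 2: Reconstruct the canonical ring and compute dimensions.} Using the explicit half-canonical rings $R(S,\Delta)$ from Propositions~\ref{prop: type D} and~\ref{prop: type E}, namely $\IC[x, y_1, y_2, z]/(z^2 - f_{10})$ with weights $(1,2,2,5)$ and $\IC[u_0,u_1,v,w]/(w^2 - g_{12})$ with weights $(1,1,4,6)$, together with the glueing along the genus-one nodal curve $\Gamma \hookrightarrow \pp(1,\dots)$ (whose section ring is the Weierstrass-type ring appearing in Corollary~\ref{cor: B decomposition}), I would assemble the canonical ring of $X$ via the pushout/fibre-product of graded rings dual to the glueing diagram~\eqref{diagr: pushout}. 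This gives the embedding in $\pp(1,1,2,2,3,4,5,6,7)$ and the ideal~\eqref{eq: ideal DE}. For the parameter count: the type-D component contributes $\dim|L+B_5|$-style parameters modulo automorphisms of $\pp^2$, the type-E component contributes $\dim|{-2K_{\kc_4}}|$-style parameters modulo $\Aut(\kc_4)$, the glueing data is the choice of an identification of the two copies of the elliptic curve $\Gamma^\nu$ preserving the marked point (which is a torsor under the finite group plus translations, i.e.\ $1$-dimensional accounting for the $j$-invariant being matched), and one must check the fibre $E$ of the elliptic pencil on the type-E component matches $\Gamma$ only finitely often. Adding these up should yield $30$; I would be careful to track whether the marked point $p$ on each component is free or constrained.

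\textbf{Step 3: Irreducibility and non-smoothability.} Irreducibility of $\gothU^{{DE}}$ follows because it is dominated by an irreducible parameter space (a product of linear systems and an irreducible glueing variety), exactly as in the proof of Proposition~\ref{prop: locating divisor B}. The crux --- and the main obstacle --- is showing $\gothU^{{DE}}$ is \emph{not} in the closure of the Gieseker component, hence (being $30$-dimensional, larger than the $28$ of the smoothable locus and of a different local structure) an irreducible component of $\overline\gothM_{1,3}$. The cleanest route is the Hodge-theoretic one already invoked in the last remark before Section~\ref{sec: canonical curve nonreduced}: compute the Deligne mixed Hodge structure on $H^2(X,\IZ)$ using \cite{CFPR22}, observe that since $p_g(\tilde X_1) = p_g(\tilde X_2) = 1$ but the gluing curve has genus one, the weight filtration is nontrivial with a $\mathrm{gr}^W_1$ or $\mathrm{gr}^W_3$ piece of a size incompatible with any limit MHS of a smoothing of an I-surface (whose $H^2$ is pure of the I-surface type); a smooth I-surface has $b_2 = 23$ and $p_g = 2$, and a one-parameter smoothing would force the limit MHS to have the same Hodge numbers with at most a unipotent monodromy log-term, contradicting the dimension of the weight-graded pieces forced by the two K3 components. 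Alternatively, one computes $\dim \gothU^{{DE}} = 30 > 28 = \dim \overline\gothM^{\text{class}}$ and shows the generic $X \in \gothU^{{DE}}$ has unobstructed $\IQ$-Gorenstein deformations all of which stay of type {DE} (e.g.\ via an extrasymmetric-matrix or Pfaffian format as in the Remark after Proposition~\ref{prop: locating divisor B}), so $\overline\gothM^{{DE}}$ is a full component; the Hodge-theoretic input is then only needed to confirm disjointness from $\overline\gothM^{\text{class}}$. I would present the deformation-theoretic computation of the $30$-dimensional smooth versal family as the main content and cite the Hodge-theoretic obstruction as the clean reason it cannot meet the classical locus.
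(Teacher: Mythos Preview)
Your Steps 1 and 2 are broadly on track but miss a crucial geometric constraint. The glueing of a type-D and a type-E component is governed by the different on the normalised boundaries, and computing it (as the paper does in Proposition~\ref{prop: DE geometry}) forces the line $L$ to be \emph{bitangent} to the quintic $B_5$, i.e.\ $B_5|_L = r + 2s_1 + 2s_2$. This condition is what makes the different $\tfrac12 r + s_1 + s_2$ on $\tilde\Gamma_1$ match the different $\tfrac12 p + q_1 + q_2$ on $\tilde\Gamma_2^\nu$. Without it your parameter count in Step~2 will be off: the paper gets $12 = 15 - 3$ for the type-D side precisely by imposing bitangency at two fixed points plus incidence at a third, then quotienting by the $3$-dimensional group fixing $L$ pointwise. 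Relatedly, you misidentify the glueing curve: $\Gamma$ is a nodal rational curve, so $\Gamma^\nu \cong \IP^1$, not an elliptic curve; the glueing map is an isomorphism of $\IP^1$'s preserving three marked points and is therefore rigid up to a finite ambiguity (handled in Remark~\ref{rem: switch}). There are no continuous glueing moduli.

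The real gap is Step~3. Your Hodge-theoretic sketch does not pin down any obstruction: limit mixed Hodge structures of one-parameter degenerations of smooth I-surfaces \emph{can} have nontrivial weight filtration, so ``the weight-graded pieces are incompatible'' needs an actual computation you do not supply. Your fallback, showing that $\IQ$-Gorenstein deformations ``stay of type DE'', is exactly what is required, but you give no mechanism for it. The paper's argument is quite different and is the key idea: every smooth I-surface carries the bicanonical involution $\sigma$, acting trivially on $|mK_X|$ for $m\le 4$, with $\chi(X/\sigma)=1$. If a DE surface were smoothable, these involutions would extend to the central fibre by \cite[Prop.~2.6]{FPRR22}. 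Lemma~\ref{lem: DE no involution} then derives a contradiction by analysing how such an involution would have to act on the two components and showing $\chi(X/\sigma)\ge 2$. Combined with semicontinuity of the Cartier index and the classification of $2$-Gorenstein I-surfaces (the other types being smoothable), this shows every small deformation of a DE surface is again DE, hence $\overline\gothM^{DE}$ is a component. The dimension inequality $30>28$ alone is insufficient: it only shows $\gothU^{DE}\not\subset\overline\gothM^{\text{class}}$, not that its closure is a component.
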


 The proof of Theorem \ref{thm: DE} will occupy the rest of the section and follow from Propositions \ref{prop: DE geometry}, \ref{prop: DE moduli} and \ref{prop: algebraic DE}.

 \subsection{Geometric description and moduli}
 Let $X$ be a 2-Gorenstein I-surface with only non-reduced canonical curves of type DE. Then $X = X_1\cup_\Gamma X_2$ and  Propositions \ref{prop: type D} and \ref{prop: type E} describe the $S_2$-fication of the components as stable pairs $(\tilde X_i, \tilde \Gamma_i)$. Incorporating the information $\chi(\tilde \Gamma_1)  = 1 >\chi(\Gamma) =0  = \chi(\tilde \Gamma_2)$ we can sum up the information in the following diagram:
\begin{equation}\label{eq: glueing diagram DE}
 \begin{tikzcd}[row sep = small]
  \IP^2 && \arrow{ll}{\text{over } L+B_5} [swap] {2:1, \text{ br.}}\tilde X_1 \arrow{dr} &  & \tilde X_2 \arrow{dl}\arrow{rr}{2:1, \text{ br. over}}[swap]{B_3+\text{vertex}}  && \IP(1,1,4)\\ 
  &&&  X_1 \cup_\Gamma X_2\\ 
  L \arrow[hookrightarrow]{uu} && \arrow{ll}[swap]{\isom} \tilde \Gamma_1 \arrow[hookrightarrow]{uu} \arrow{dr}[pos = .3]{\text{bir.}} && \tilde \Gamma_2\arrow{dl}[swap, pos = .3]{\isom} \arrow[hookrightarrow]{uu} \arrow{rr}{2:1, \text{ br. over}}[swap]{\text{4 points}} && \IP(1,4)\arrow[hookrightarrow]{uu}\\
  &&& \Gamma\arrow[hookrightarrow]{uu}\\
 \end{tikzcd}
\end{equation}
 \begin{prop}\label{prop: DE geometry}
 Let $(\tilde X_1, \tilde \Gamma_1)$ be of type D and $(\tilde X_2, \tilde \Gamma_2)$ of type E. 
 \begin{enumerate}
  \item The pairs  $(\tilde X_1, \tilde \Gamma_1)$  and $(\tilde X_2, \tilde \Gamma_2)$ occur as the $S_2$-fications of the components of a 2-Gorenstein I-surface $X$ if and only if
  \begin{enumerate}
   \item $\tilde \Gamma_2$ is a nodal rational curve (of arithmetic genus 1),
   \item the line $L$ is bitangent to $B_5$, that is, $B_5|_L = r + 2(s_1+s_2)$, where $r,s_1$ and $s_2$ are distinct points.
  \end{enumerate}
\item The general $\tilde X_1$ as in $(i)$ is a singular K3 surface with exactly one  $A_1$ singularity at $r$ and $A_3$ singularities at $s_1$ and $s_2$. 

The general $\tilde X_2$ as in $(i)$ is  a singular K3 surface with a unique $A_1$ singularity at a point $p$ mapping to the vertex in $\kc_4 = \IP(1,1,4)$. 
  \item   The map $\tilde \Gamma_1 \to \Gamma \isom \tilde \Gamma_2$ and therefore also the $S_2$-fication map $\tilde X_1 \to X_1$, identifies the two points $s_1$ and $s_2$ as shown in Figure \ref{fig: component DE}.
 \end{enumerate}
 \end{prop}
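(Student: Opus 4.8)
The plan is to extract everything from Koll\'ar's glueing theorem \cite[Thm.~5.13]{KollarSMMP}: a $2$-Gorenstein I-surface $X$ with only non-reduced canonical curves of type DE is reconstructed from the disjoint union of the stable log pairs $(\tilde X_1,\tilde\Gamma_1)$ (type D) and $(\tilde X_2,\tilde\Gamma_2)$ (type E) together with an isomorphism $\tau\colon\tilde\Gamma_1^\nu\xrightarrow{\ \sim\ }\tilde\Gamma_2^\nu$ of the normalised boundary curves carrying $\Diff_{\tilde\Gamma_1^\nu}(0)$ to $\Diff_{\tilde\Gamma_2^\nu}(0)$, and any such glueing produces an slc surface. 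So I have to (1) identify the normalisations, (2) compute the two differents, (3) see when they match, and (4) check that the glued surface is again a $2$-Gorenstein I-surface.

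For (1): in both Propositions \ref{prop: type D} and \ref{prop: type E} the boundary $\tilde\Gamma_i$ is irreducible with $K_{\tilde X_i}+\tilde\Gamma_i=\tilde\Gamma_i$ ample of square $\tfrac12$ and $2\tilde\Gamma_i$ Cartier; type D moreover gives $\tilde\Gamma_1\cong\IP^1$. For type DE we have $\chi(\tilde\Gamma_2)=0$ by Lemma \ref{lem: types non-reduced}, hence $p_a(\tilde\Gamma_2)=1$, so the unique singular point of the irreducible curve $\tilde\Gamma_2$ has $\delta$-invariant $1$ and is a node or a cusp; as $\tilde X_2$ is Gorenstein with Du Val singularities, a cuspidal boundary curve violates log canonicity of $(\tilde X_2,\tilde\Gamma_2)$, so $\tilde\Gamma_2$ is nodal — this is (i)(a) — and $\tilde\Gamma_2^\nu\cong\IP^1\cong\tilde\Gamma_1^\nu$. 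Since $\tilde\Gamma_2\to\Gamma$ is finite birational with $\chi(\tilde\Gamma_2)=\chi(\Gamma)=0$ it is an isomorphism, so $\Gamma$ is a nodal rational curve and $\tilde\Gamma_1=\tilde\Gamma_1^\nu\to\Gamma$ is the normalisation of its node.

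For (2)--(3): on the type E side $\Diff_{\tilde\Gamma_2^\nu}(0)$ has coefficient $1$ at each of the two preimages $q_1,q_2$ of the node of $\tilde\Gamma_2$ and coefficient $\tfrac12$ at the preimage of the base point $p$ of $|\tilde\Gamma_2|$, where $(\tilde X_2,\tilde\Gamma_2)$ looks like $\bigl(\tfrac12(1,1),\text{half-line}\bigr)$; its degree is $\tfrac12-\deg\omega_{\IP^1}=\tfrac52$ by adjunction, matching $1+1+\tfrac12$. On the type D side $f\colon\tilde X_1\to\IP^2$ is branched over $L+B_5$ with $\tilde\Gamma_1=\tfrac12f^*L$, so $\Diff_{\tilde\Gamma_1^\nu}(0)$ is supported over $B_5\cap L$; at a point of contact order $m$, completing the square in the local equation $w^2=(\text{branch})$ shows $\tilde X_1$ has an $A_{2m-1}$ singularity through which $\tilde\Gamma_1$ represents the order-$2$ class in $\Cl=\IZ/2m$, and an orbifold computation on the quotient presentation $A_{2m-1}=\IC^2/\mu_{2m}$ — keeping the Riemann--Hurwitz term for the degree-$2$ map of the line to $\tilde\Gamma_1^\nu$ — gives different coefficient $\tfrac m2$ there. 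Log canonicity forces $m\le 2$, the total degree is automatically $\tfrac12\deg(B_5|_L)=\tfrac52$, and $\tau$ must match the coefficient multisets: $\{1,1,\tfrac12\}$ equals $\{\tfrac{m_i}{2}\}_i$ exactly when two contact orders equal $2$ and one equals $1$, i.e. $B_5|_L=r+2(s_1+s_2)$, which is (i)(b); it also identifies the singularities of the general $\tilde X_1$ as $A_3$ at $s_1,s_2$ and $A_1$ at $r$, so together with Proposition \ref{prop: type E} we get (ii). For the converse in (i) and step (4): once the differents agree, Koll\'ar's theorem glues the two pairs to an slc surface $X=X_1\cup_\Gamma X_2$, one has $K_X^2=(K_{\bar X_1}+\bar D_1)^2+(K_{\bar X_2}+\bar D_2)^2=\tfrac12+\tfrac12=1$ and $\chi(X)=3$ by Formula \eqref{eq: chi}, and $2K_X$ is Cartier since all differents lie in $\tfrac12\IZ$ and the only non-Du Val points of $\tilde X_i$ away from $\Gamma$ are the $A_1$'s glued at $r=p$; hence $X$ is a $2$-Gorenstein I-surface by Proposition \ref{prop: canonical section}.

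Finally (iii) is forced by the coefficient bookkeeping: $\tau$ sends the coefficient-$1$ locus $\{s_1,s_2\}$ of $\Diff_{\tilde\Gamma_1^\nu}(0)$ bijectively to the coefficient-$1$ locus $\{q_1,q_2\}$ of $\Diff_{\tilde\Gamma_2^\nu}(0)$, and $q_1,q_2$ are the two branches of the node of $\tilde\Gamma_2\cong\Gamma$; so the composite $\tilde\Gamma_1^\nu\xrightarrow{\tau}\tilde\Gamma_2^\nu\to\tilde\Gamma_2\cong\Gamma$, which is exactly $\tilde\Gamma_1\to\Gamma$ and hence the restriction of $\tilde X_1\to X_1$, identifies $s_1$ with $s_2$. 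I expect the main obstacle to be the different computation at the $A_3$ points: the value $1$ — not $\tfrac12$ — is precisely what makes glueing with the nodal type E curve possible, and it is easy to drop the Riemann--Hurwitz correction and get it wrong; using the cyclic quotient presentation, or the known formula for differents of curves through cyclic quotient singularities, gives it cleanly.
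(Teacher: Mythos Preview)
Your approach is essentially the paper's: use Koll\'ar's glueing theorem, compute the differents on both normalised boundary curves, and match them. The one substantive difference is how you compute $\Diff_{\tilde\Gamma_1^\nu}(0)$ on the type D side. The paper does this in one stroke by a global adjunction argument: since $2\tilde\Gamma_1$ is Cartier, the square of the Poincar\'e residue identifies $\omega_{\tilde\Gamma_1}^{\otimes 2}(2\Diff_{\tilde\Gamma_1}(0))$ with $\iota^*f^*\ko_{\IP^2}(2K_{\IP^2}+B_5+2L)$, whence $\Diff_{\tilde\Gamma_1}(0)=\tfrac12 B_5|_L$ directly, without ever analysing the local singularity type or the class of $\tilde\Gamma_1$ in $\Cl(A_{2m-1})$. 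This bypasses exactly the delicate orbifold computation you flag as the likely obstacle, and it gives the contact-order condition and the coefficient $m/2$ simultaneously. Your local route is correct in outcome but more laborious and, as written, the ``degree-$2$ map of the line to $\tilde\Gamma_1^\nu$'' and the Riemann--Hurwitz correction need to be made precise; the global argument avoids this entirely. Your verification of the converse (checking $K_X^2$, $\chi$, and $2$-Gorenstein) is more explicit than the paper's, which simply observes that $\Aut(\IP^1)$ acts $3$-transitively so the glueing always exists; your sentence about ``the only non-Du Val points of $\tilde X_i$ away from $\Gamma$'' is garbled (those $A_1$'s \emph{are} Du Val and \emph{are} on $\Gamma$), but the intended index argument via half-integral differents is sound.
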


 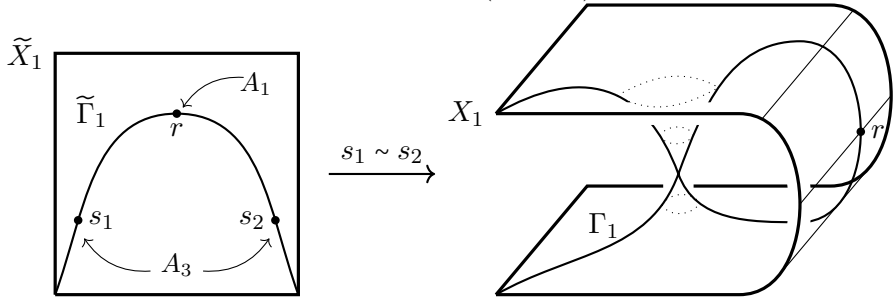
\begin{figure}
  \caption{$S_2$-fication of the (generic) component $X_1$}\label{fig: component DE}
  \begin{tikzpicture}[scale = .8]
   
   \begin{scope}[xshift = -7.25cm] %
    \draw[very thick]
    (0,0) -- (4,0) -- ( 4,4) -- (0,4) node [left] {$\tilde X_1$}
    to (0,0);

    \draw[thick] (0,0) to [in = 180, out = 70]
    coordinate [pos = .3] (s1)
    node [ pos = .7, above left] {$\tilde \Gamma_1$}
    (2,3) 
    coordinate  (r)
    to[in = 110, out = 0]
    coordinate [pos = .7] (s2)
    (4,0);
    
    \fill (s1) circle [radius = 2pt] node[right] {$s_1$};
    \fill (s2) circle [radius = 2pt] node[left] {$s_2$} ;
    \fill (r) circle [radius = 2pt]  node[below] {$r$};
    
    \draw[->, thick] (4.5, 2) to node [above] {$s_1\sim s_2$} ++ (1.75,0);
    
    \node (A2) at (2,.5) {\small $A_3$};
    \draw[<-]  (s2) ++( -.1, -.3) to[bend left] (A2);
    \draw[<-]  (s1) ++( .1, -.3) to[bend right] (A2);
    
    \draw[<-] (r) ++ (.1, .1) to[bend left] ++ (1, .5);
  \draw (r) ++ (1.3, .5) node  {\small $A_1$};
   \end{scope}

   \begin{scope}[scale = .5]

\draw[very thick] (0,0) to ++ (3,3.6) to  ++ (8,0)
to[out = 0, in = -90] ++ (2,3)
 to[out = 90, in  = 0] ++ (-2,3) -- ++ (-8,0) to ++ (-3,-3.6);

\draw [line width = .3cm, white] 
(5.5, 3.6) to ++ (1,0);

\coordinate (P) at (6, 4);
\draw[thick] (0,0) 
to[out = 30, in = -110] 
coordinate [ pos = .85] (P1)
node[pos = .5, above] {$\Gamma_1$}
(P) to[ out = 70, in = -180] 
 coordinate [ pos = .2] (P2)
  coordinate [ pos = .47] (P5)
(10, 8.4) 
 to[out = 0, in = 90]
 ++ (2,-3)  coordinate (rr)
 to[out = -90, in  = 0] 
 ++ (-2,-3) 
 to [out = 180, in = -70]
 coordinate [ pos = .73] (P3)
 (P)
 to [out  = 110, in = 30, looseness = 1.3] 
 coordinate [ pos = .16] (P4)
  coordinate [ pos = .44] (P6)
 (0,6)
;

\fill (rr) circle [radius = 4pt] node[right] {$r$};

\begin{scope}[very thick]


\draw [line width = .3cm, white] 
(2,6) to (8,6)
to[out = 0, in = 90]
++ (2,-3) 
to[out = -90, in  = 0] ++ (-2,-3);

\draw (0,0) -- ++ (8,0)
to[out = 0, in = -90]
coordinate [pos = 0.4] (Q1)
++ (2,3)  
to[out = 90, in  = 0] coordinate [pos = 0.8] (Q2)
++ (-2,3)
-- ++ (-8,0)
node[left] {$X_1$};

\end{scope}

\begin{scope}[thin]
\draw (8,0) ++ (2,3)  to ++ (3, 3.6);
\draw (Q1) to ++ (3, 3.6);
\draw (Q2) to ++ (3, 3.6);

%
%
\begin{scope}[dotted]
\draw (P1) to[out = -60, in = -120] (P3);
\draw (P1) to[out = 60, in = 120] (P3);
\draw (P4) to[out = -60, in = -120] (P2);
\draw (P4) to[out = 60, in = 120] (P2);

\draw (P6) to[out = -30, in = -150] (P5);
\draw (P6) to[out = 30, in = 150] (P5);
\end{scope}

\end{scope}

\end{scope}
\end{tikzpicture}
\end{figure}

 \begin{proof} We use Koll\'ar's glueing principle \cite[Thm.~5.13]{KollarSMMP}.
 In order to be able to glue the two surfaces, the normalisations of the boundaries have to be isomorphic, so $\tilde \Gamma_2^\nu \isom \tilde \Gamma_1^\nu = \tilde \Gamma_1 \isom \IP^1$. Since the pair is slc we have that $\tilde \Gamma_2$ is a nodal curve of arithmetic genus 1.
 
 The glueing involution in Koll\'ar's glueing principle also has to preserve the different. On $\tilde \Gamma_2^\nu$, we have $\Diff_{\tilde \Gamma_2^\nu}(0) =\frac 12 p + q_1 + q_2 $, where $p$ is the basepoint of $|\tilde \Gamma_2|$, an $A_1$ singularity in $\tilde X_2$, and the $q_i$ are the preimages of the node of $\tilde \Gamma_2$. 
 
 Let us compute the different on $\tilde \Gamma_1$. Let $\iota \colon\tilde \Gamma_1 \to \tilde X_1$ be the inclusion and recall that $\tilde \Gamma_1$ is a smooth rational curve and that 
 \[\omega_{\tilde X_1}(\tilde \Gamma_1)\refl{2} = \omega_{\tilde X_1}\refl{2}(2\tilde \Gamma_1) =  f^*\ko_{\IP^2}(2K_{\IP^2}+ B_5+L)(f^*L)=f^*\ko_{\IP^2}(2K_{\IP^2}+ B_5+2L)\]
 is locally free.
 Let $U \subset \tilde X_1$ be the smooth locus and note that the generic point of $\tilde \Gamma_1$ lies in $U$. 
 
 Then by definition \cite[(4.3.7), p.154]{KollarSMMP} the divisor  $2\Diff_{\tilde \Gamma_1}(0)$ is uniquely defined by the property that the square of the Poincar\'e residue morphism $\omega_{U}(\tilde \Gamma_1\cap U) |_{\tilde\Gamma_1\cap U }\isom \omega_{\tilde\Gamma_1\cap U}$ extends to an isomorphism
 \[ \omega_{\tilde \Gamma_1}^{\tensor 2}\left( 2\Diff_{\tilde \Gamma_1}(0)\right)
 \isom \iota^*\!\omega_{\tilde X_1}(\tilde \Gamma_1)\refl{2} \isom \iota^*\!f^*\ko_{\IP^2}(2K_{\IP^2}+ B_5+2L)
 \]
 In total, we get $\Diff_{\tilde \Gamma_1}(0) = \frac 12B_5|_L$ under the isomorphism $\tilde \Gamma_1\isom L$, so compatibility for glueing forces $(ii)$.

 Conversely, $\Aut(\IP^1)$ acts 3-transitively, so any pair with the above properties can be glued, thus we have proved $(i)$.
  The last two points follow easily.
  \end{proof}
\begin{rem} \label{rem: switch}
In order to glue surfaces of type D and E as in Proposition \ref{prop: DE geometry}, we need to pick an isomorphism $\tilde \Gamma_1\to \tilde \Gamma_2^{\nu}$ that preserves the different. This boils down to mapping $s$ to $p$ and the points $s_1,s_2$ to $q_1,q_2$, so we have two choices. However, acting  on $\tilde X_1$  with the covering involution and on $\tilde X_2$ with the identity gives an isomorphism between the surfaces corresponding to the two choices.
\end{rem}

\begin{lem} \label{lem: DE no involution}
Let $X=X_1\cup X_2$ be a 2-Gorenstein I-surface of type DE. 
If   $\sigma$ is an involution of $X$  that  acts as the identity on $|mK_X|$ for $m\le 4$, then $\chi(X/\sigma)\ge 2$.
\end{lem}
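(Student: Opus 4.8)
The plan is to compute $\chi(X/\sigma)$ from the way $\sigma$ splits $\pi_*\OO_X$, to reduce everything to how $\sigma$ acts on $H^0(X,\omega_X)$, and then to force a contradiction out of the geometry of the type E component in the single bad case. First I would set $\pi\colon X\to Y:=X/\sigma$; in characteristic zero $\pi_*\OO_X=\OO_Y\oplus(\pi_*\OO_X)^-$, so $\chi(Y)=3-\chi\bigl((\pi_*\OO_X)^-\bigr)$. The anti-invariant part has $H^0=0$, has $H^1=0$ because $q(X)=0$ by Proposition \ref{prop: canonical section}, and has $H^2(X,\OO_X)\cong H^0(X,\omega_X)^\vee$ by Serre duality ($X$ is Cohen--Macaulay). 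Since $\sigma$ fixes $|K_X|=\IP\bigl(H^0(\omega_X)\bigr)$ pointwise it acts on $H^0(\omega_X)$ by a scalar $\epsilon\in\{1,-1\}$, whence $\chi\bigl((\pi_*\OO_X)^-\bigr)=h^0(\omega_X)^-$ equals $0$ if $\epsilon=1$ and $p_g(X)=2$ if $\epsilon=-1$; so $\chi(Y)\in\{3,1\}$, and it suffices to rule out $\epsilon=-1$.

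So assume $\epsilon=-1$. I would spread this across the low plurigenera: $\sigma$ acts on the canonical ring $\bigoplus_mH^0(\omega_X^{[m]})$ as a graded ring automorphism which is scalar in each degree $m\le 4$, and $H^0(\omega_X)^{\otimes m}$ has nonzero image in $H^0(\omega_X^{[m]})$ (it contains the $m$-th power of a degree one generator not vanishing on a component), so $\sigma$ acts on $H^0(\omega_X^{[m]})$ by $(-1)^m$ for $m\le 4$. By Proposition \ref{prop: canonical section} and Lemma \ref{lem: types non-reduced}, $X=X_1\cup_\Gamma X_2$ with $S_2$-fications $\tilde X_1$ of type D and $\tilde X_2$ of type E; being non-isomorphic, they are each preserved by $\sigma$, which therefore induces involutions $\tilde\sigma_i$ on $\tilde X_i$ acting canonically on the half-canonical rings $R(\tilde X_i)=\bigoplus_mH^0\bigl(X_i,\omega_X^{[m]}|_{X_i}\bigr)$ of Propositions \ref{prop: type D} and \ref{prop: type E} (with $\tilde\Gamma_i$ in the role of $\Delta$). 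By Proposition \ref{prop: canonical section}\,(iii), $H^0(\omega_X)=\IC\,(s_1,0)\oplus\IC\,(0,s_2)$ with $s_i$ a section of $\omega_X|_{X_i}$ vanishing on $\Gamma$, so $\tilde\sigma_i^*(s_i)=-s_i$.

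Next I would pin down $\tilde\sigma_2$, in the notation $R(\tilde X_2)=\IC[u_0,u_1,v,w]/(w^2-g_{12})$, $\deg(u_0,u_1,v,w)=(1,1,4,6)$, of Proposition \ref{prop: type E}. The restriction maps $H^0(\omega_X^{[m]})\to H^0(\tilde X_2,m\tilde\Gamma_2)$ are $\sigma$-equivariant, and for $m=2,4$ they are surjective, since their kernels are the sections vanishing on $X_2$, i.e.\ $H^0(\tilde X_1,(m-1)\tilde\Gamma_1)$, of dimensions $h^0(\tilde\Gamma_1)=1$ and $h^0(3\tilde\Gamma_1)=3$ by Proposition \ref{prop: type D}, matching the codimensions $P_2(X)-h^0(2\tilde\Gamma_2)=4-3$ and $P_4(X)-h^0(4\tilde\Gamma_2)=9-6$ (here one uses $q(X)=0$ and the generalised Kodaira vanishing, as in Proposition \ref{prop: extra info I surfaces}). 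Hence $\tilde\sigma_2^*$ is the scalar $+1$ on $H^0(2\tilde\Gamma_2)=\Sym^2\langle u_0,u_1\rangle$ and on $H^0(4\tilde\Gamma_2)=\Sym^4\langle u_0,u_1\rangle\oplus\IC v$, which forces $\tilde\sigma_2^*=\pm\id$ on $\langle u_0,u_1\rangle$, hence $-\id$ because $0\ne s_2\in\langle u_0,u_1\rangle$ and $\tilde\sigma_2^*(s_2)=-s_2$, and also $\tilde\sigma_2^*(v)=v$. Then $\tilde\sigma_2^*$ preserves the subring $\IC[u_0,u_1,v]$ and acts there by $u_i\mapsto-u_i$, $v\mapsto v$, which is the scaling by $-1$, so $\tilde\sigma_2$ descends along the degree two morphism $f_2\colon\tilde X_2\to\IP(1,1,4)=\Proj\IC[u_0,u_1,v]$ defined by $|4\tilde\Gamma_2|$ to the identity of $\IP(1,1,4)$; being an involution preserving every fibre of $f_2$, it is therefore $\id_{\tilde X_2}$ or the covering involution of $f_2$, and both act trivially on $\langle u_0,u_1\rangle=f_2^*H^0\bigl(\IP(1,1,4),\OO(1)\bigr)$ — contradicting $\tilde\sigma_2^*=-\id$ there. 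This rules out $\epsilon=-1$, so $\chi(X/\sigma)=3\ge 2$. The points needing care are that passing to the $S_2$-fications does not change the relevant spaces of sections, that the two restriction maps above have exactly the stated kernels, and that $\tilde\sigma_2$ acts on $R(\tilde X_2)$ through its canonical (ambiguity-free) lift — this last being what makes the comparison with the $\IG_m$-scalings on $\IP(1,1,4,6)$ in the final step legitimate.
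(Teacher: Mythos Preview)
Your reduction to deciding the sign $\epsilon$ on $H^0(\omega_X)$ is clean, and the computation that $\tilde\sigma_2$ descends to the identity on $\IP(1,1,4)$ is correct. The gap is in the very last step. You assert that both the identity and the covering involution $\iota_2$ of $f_2\colon\tilde X_2\to\IP(1,1,4)$ act trivially on $\langle u_0,u_1\rangle$, but this is false for $\iota_2$ once you use the \emph{same} linearisation you have been using throughout. The action you computed on $\langle u_0,u_1\rangle$ is the pullback action on $H^0(\tilde X_2,\tilde\pi_2^*\omega_X)$; the identification $\tilde\pi_2^*\omega_X\cong\omega_{\tilde X_2}\otimes\OO(\tilde\Gamma_2)\cong\OO(\tilde\Gamma_2)$ involves trivialising $\omega_{\tilde X_2}$, and $\iota_2$ acts on $H^0(\omega_{\tilde X_2})$ by $-1$ (the generating form is the Poincar\'e residue $\tfrac{du_0\wedge du_1\wedge dv}{2w}$, which changes sign under $w\mapsto -w$). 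Hence with your canonical lift the covering involution also has $u_i\mapsto -u_i$, $v\mapsto v$, and there is no contradiction: your argument only excludes $\tilde\sigma_2=\id$, not $\tilde\sigma_2=\iota_2$. Equivalently, the ring automorphism $(u_i,v,w)\mapsto(-u_i,v,\pm w)$ differs from the ``obvious'' lift of $\id$ (resp.\ $\iota_2$) exactly by the $\IG_m$-scaling $t=-1$, so comparing ring lifts cannot separate these two cases.

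The paper closes this gap by bringing in the other component. Since $\tilde\Gamma_1$ lies in the branch locus of $\tilde X_1\to\IP^2$, the covering involution $\iota_1$ fixes $\tilde\Gamma_1$ pointwise; one shows (as you would on the $X_1$ side) that $\tilde\sigma_1\in\{\id,\iota_1\}$ and in either case $\sigma$ is the identity on $\Gamma$. Then $\tilde\sigma_2$ must fix $\tilde\Gamma_2$ pointwise, which $\iota_2$ does not (because $\tilde\Gamma_2$ is \emph{not} in the branch locus of $f_2$), forcing $\tilde\sigma_2=\id$. Your approach can be repaired along the same lines: run your analysis on $\tilde X_1$ to conclude $\tilde\sigma_1\in\{\id,\iota_1\}$, observe that both restrict to the identity on $\tilde\Gamma_1$, and then use the incompatibility $\iota_1|_{\tilde\Gamma_1}=\id\neq\iota_2|_{\tilde\Gamma_2}$ to exclude $(\iota_1,\iota_2)$ as a glued involution.
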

\begin{proof}
Let $\sigma$ be an involution as in the statement and consider the  induced  involution of $\tilde X=\tilde X_1\sqcup \tilde X_2$: since $\tilde \Gamma_1$ and $\tilde \Gamma_2$  are not isomorphic by Proposition \ref{prop: DE geometry},  they  cannot be exchanged by the involution and so   $\sigma$ induces involutions $\sigma_i$ of $\tilde X_i$   that preserve $\tilde \Gamma_i$, $i=1,2$. 

 Let  $\alpha \in H^0(\tilde X_1, \tilde \Gamma_1)$ be a  section vanishing on $\tilde \Gamma_1$.   
 There is an injective  pull back map $r =(r_1,r_2)\colon H^0(4K_X)\to H^0(\tilde X_1, 4\tilde \Gamma_1)\oplus H^0(\tilde X_2, 4\tilde \Gamma_2)$, so we may identify $H^0(4K_X)$ with the image of $r$.
So $H^0(4K_X)$ contains all the sections of the form $(\alpha^2\gamma,0)$, where $\gamma$ varies in $H^0(2\tilde \Gamma_1)=\pi_1^*H^0(\pp^2,\OO_{\pp^2}(1))$, where $\pi_1$ is the double cover given by $|2\tilde \Gamma_1|$ (cf. Proposition  \ref{prop: type D}). So $\sigma_1$ is either the identity or the involution associated with $\pi_1$ and, in either case it restricts to the identity on $\tilde \Gamma_1$. As a consequence  $\sigma$ restricts to the identity on $\Gamma$, and therefore
also $\sigma_2$ restricts to the identity on $\tilde \Gamma_2$.
An argument similar to the previous one shows that $\sigma_2$ preserves the curves of $|\tilde \Gamma_2|$ and acts on $\kc_4\subset \pp^5$, the image of the 2-to-1 map $\pi_2$ defined by $|4\tilde\Gamma_2|$ (cf. \ref{prop: type E}),  mapping each ruling to itself. Let $\tau$ be the   automorphism  on $\kc_4\cong \pp(1,1,4)$ induced by $\sigma$.  If $\tau$ is the identity then $\sigma_2$ is  either the identity or the covering involution. Since $\tilde \Gamma_2$ is not in the branch locus of $\tilde X_2\to \kc_4$, the covering involution is not the identity on $\tilde \Gamma_2$, so  in this case $\sigma_2$ must be the identity. Now assume that $\tau$ is a non-trivial involution of  $\kc_4$: 
since $\tau$ maps each ruling to itself, there are weighted  homogeneous coordinates $x_0,x_1, y$  on $\pp(1,1,4)$ such that $\tau$ is given by $(x_0,x_1,y)\mapsto (x_0,x_1,-y)$. So $\tau$ is non-trivial on every  ruling of $\kc_4$ and, a fortiori, $\sigma_2$ is not trivial on $\tilde \Gamma_2$. So the only possibility is that $\sigma_2$ is the identity, and therefore if $\sigma$ is non-trivial then  $\sigma_1$ is the covering involution of $\tilde X_1\to\pp^2$.

If $\sigma$ is the identity,  then $\chi(X/\sigma)=\chi(X)=3$. If $\sigma$ is not the identity, then  it is easy to see that $Y:=X/\sigma$ is defined by the push-out diagram:
\begin{equation}\nonumber
\begin{tikzcd}
    \tilde X/\sigma \dar\rar[hookleftarrow] & \tilde \Gamma/\sigma \dar & \tilde  \Gamma^\nu/\sigma \lar[swap]{\bar\nu}\dar
    \\
Y \rar[hookleftarrow] &\Gamma &\Gamma^\nu.\lar[swap]{\nu}
    \end{tikzcd}
\end{equation}
Now Lemma \ref{lem: chi} gives
$1=\chi(Y)=\chi(\Gamma)+\chi(\tilde X/\sigma)-\chi(\tilde \Gamma)=0+(1+2)-(1+0)=2$.
\end{proof}

\begin{prop}\label{prop: DE moduli}
  The subset $\gothU^{DE}$ is irreducible of dimension $30$ and does not intersect the closure of the Gieseker component. 
Its closure 
\[ \overline\gothM^{DE}  = \overline{\gothU^{DE}}\subset \overline \gothM_{1,3}\]
 is an irreducible component of the moduli space. 
\end{prop}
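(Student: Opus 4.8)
The plan is to prove the three assertions of the proposition separately: (a) $\gothU^{DE}$ is irreducible of dimension $30$; (b) $\gothU^{DE}\cap\overline\gothM_{1,3}^{\text{class}}=\emptyset$; (c) $\overline\gothM^{DE}$ is an irreducible component of $\overline\gothM_{1,3}$. For (a), recall from Proposition~\ref{prop: DE geometry} and Remark~\ref{rem: switch} that every $X\in\gothU^{DE}$ is, up to a finite choice, the Koll\'ar gluing of a type D pair $(\tilde X_1,\tilde\Gamma_1)$ — a double plane branched over $L+B_5$ with $L$ bitangent to $B_5$ — and a type E pair $(\tilde X_2,\tilde\Gamma_2)$ — a K3 double cover of $\kc_4$ with $\tilde\Gamma_2$ a nodal fibre — the gluing isomorphism being rigid because $\Aut(\pp^1)$ is $3$-transitive and the two differents are supported at three points each. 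I would therefore realise $\gothU^{DE}$ as the image of an irreducible parameter space: the triples $(\pp^2,L,B_5)$ with $L$ bitangent to $B_5$ form an irreducible family of dimension $2+\bigl(\dim|\OO_{\pp^2}(5)|-2\bigr)-\dim\Aut(\pp^2)=2+18-8=12$ modulo projectivities (bitangency being a codimension-two condition on the quintic), the pairs $(\tilde X_2,\tilde\Gamma_2)$ depend on the $18$ parameters of Construction~\ref{constr: X2} (the nodal fibre being one of finitely many), and the gluing contributes nothing; since two type DE surfaces are isomorphic exactly when these data agree, $\gothU^{DE}$ is irreducible of dimension $12+18=30$.

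For (b), suppose for contradiction that $X\in\gothU^{DE}$ lies in $\overline\gothM_{1,3}^{\text{class}}$, so there is a flat family $\kx\to\Delta$ of stable I-surfaces over a disc with $\kx_0=X$ and $\kx_t$ a canonical I-surface for $t\neq0$. For $t\neq0$ the bicanonical map presents $\kx_t$ as a double cover of the quadric cone $Q$, and in the canonical ring $\IC[x_1,x_2,y,z]/(z^2-f_{10})$ the associated involution $\sigma_t$ is $z\mapsto-z$; hence $\sigma_t$ acts as the identity on $H^0(mK_{\kx_t})$ for $m\le4$ (these graded pieces involve no $z$) and $\kx_t/\sigma_t=Q$. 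Because every fibre is an I-surface, the sheaves $\pi_*\omega_{\kx/\Delta}\refl m$ are locally free of the constant rank $P_m$, so the relative bicanonical structure does not degenerate and the $\sigma_t$ extend to an involution $\sigma$ of $\kx$; set $\sigma_0=\sigma|_X$. The $\sigma$-invariant subsheaf of $\pi_*\omega_{\kx/\Delta}\refl m$ is a direct summand, hence of constant rank, equal to $P_m$ on the general fibre for $m\le4$ and therefore on $X$; so $\sigma_0$ acts trivially on $H^0(mK_X)$ for $m\le4$. Finally $\kx/\sigma\to\Delta$ is flat (the invariants are a direct summand of the flat sheaf obtained by pushing $\OO_\kx$ to $\kx/\sigma$) and its formation commutes with restriction to $\kx_0$, so $\chi(X/\sigma_0)=\chi(Q)=1$, contradicting Lemma~\ref{lem: DE no involution}. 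Thus (b) holds.

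For (c) it suffices, since $\gothU^{DE}$ is an irreducible $30$-dimensional locally closed subset of $\overline\gothM_{1,3}$, to show that at a general $[X]\in\gothU^{DE}$ one has $\dim_{[X]}\overline\gothM_{1,3}=30$: then $\overline\gothM^{DE}$ is irreducible of dimension $30$ through $[X]$ and equals the component of $\overline\gothM_{1,3}$ containing it. As $X$ is of general type with finite automorphism group, $\dim_{[X]}\overline\gothM_{1,3}\le\dim T^1_X$ with $T^1_X$ the space of first-order deformations of $X$, so it is enough to prove $\dim T^1_X=30$, the inequality $\dim T^1_X\ge30$ being clear from the $30$-parameter family of (a). I would compute $T^1_X$ from the explicit equations of $X\subset\pp(1,1,2,2,3,4,5,6,7)$ produced in Proposition~\ref{prop: algebraic DE}, exactly as for type B, checking — by hand after \cite{reid90, stevens} or with the package of \cite{VersalDeformationsArticle} — that every first-order deformation of $X$ is realised inside this embedded format, so that $\dim T^1_X$ equals the number of essential coefficients in the equations minus the dimension of the relevant subgroup of $\Aut\pp(1,1,2,2,3,4,5,6,7)$, which comes out to $30$.

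The main obstacle is the extension of the bicanonical involution across the central fibre in part (b): one must exclude a degeneration of the relative bicanonical structure, and the decisive input is precisely that the Hilbert function of the canonical ring is fixed throughout the family (built into the definition of an I-surface), forcing $\pi_*\omega_{\kx/\Delta}\refl m$ to be a vector bundle and the relative bicanonical morphism to remain finite of degree two; without this the invariant subbundle could jump and the $\chi$-computation would fail. The deformation-theoretic computation in part (c) is in principle routine but potentially lengthy; the explicit equations keep it manageable, with computer assistance as in the type B analysis if needed.
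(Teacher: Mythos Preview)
Parts (a) and (b) follow the paper closely: the moduli count is identical, and the contradiction via extension of the bicanonical involution together with Lemma~\ref{lem: DE no involution} is exactly the paper's method (the paper invokes \cite[Prop.~2.6]{FPRR22} for the extension rather than your direct argument via constancy of plurigenera, but the substance is the same).

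Part (c) is where you diverge, and here there is a genuine gap. The paper does \emph{not} compute $T^1_X$. Instead it proves (b) and (c) in one stroke by showing that \emph{every} small deformation of a type DE surface remains of type DE. The key observation you are missing is semicontinuity of the Cartier index: since $\omega_{\kx/C}\refl{2}$ being locally free is an open condition on the base, any small deformation of $X$ is again $2$-Gorenstein. By the classification of $2$-Gorenstein I-surfaces already established earlier in the paper, a nearby fibre not of type DE must be of type A, B or DD, each of which lies in $\overline\gothM_{1,3}^{\text{class}}$; one then replaces the family by one with smooth general fibre and runs the involution argument of your part (b) to reach a contradiction. Hence $\gothU^{DE}$ is open in $\overline\gothM_{1,3}$, which simultaneously gives non-intersection with the Gieseker closure and shows that $\overline{\gothU^{DE}}$ is an irreducible component.

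Your proposed route---bounding $\dim_{[X]}\overline\gothM_{1,3}$ by $\dim T^1_X$ and asserting that the latter equals $30$---would work only if that equality actually holds, and you have not carried out the computation; for a non-normal surface cut out by twenty equations in codimension six there is no a priori reason the versal deformation should be unobstructed, and if $\dim T^1_X>30$ your inequality yields nothing. The paper's semicontinuity-plus-classification argument bypasses this computation entirely, and is the decisive idea missing from your proof.
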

\begin{proof}
First  we count the moduli of the construction. Since the elliptic fibration of a surface of type E has finitely many singular fibres, that are all nodal if the surface is general,  by Proposition \ref{prop: DE geometry} it is enough to count parameters for surfaces of type E and   for  surfaces of type 
 H such that the  double cover ${\tilde X_1 } \to \IP^2$ is branched over a sextic $L+B_5$ with the line $L$  bitangent to the quintic $B_5$. 
 Fixing the  line $L = \{x_0=0\}$  and  points $p,q_1,q_2\in L$,  we see that   system of plane  quintics tangent to $L$ at $q_1$ and $q_2$ and passing through $p$  is has dimension 15. Subtracting the dimension of the subgroup of $\Aut(\pp^2)$ that fixes $L$ pointwise we obtain $15-3=12$ moduli.
 
 Surfaces of type E are double covers of $ \kc_4 \cong \IP(1,1,4)$, branched over a  cubic section  $B$ and the vertex.  The curve $B$ moves in a system of dimension 27, so subtracting the dimension of $\Aut(\kc_4)$ we get $27-9=18$ moduli. Summing up, surfaces of type DE depend on $12+18=30$ moduli.
 We can consider the surfaces of type E such that the branch locus of the map to $\kc_4$ is simply tangent to a fixed ruling  $R_0$ and define $\tilde \Gamma_2$ to be the preimage of $R_0$; since $\Aut(\kc_4)$ acts transitively on the set of rulings,  in this way we obtain an irreducible family of pairs $(\tilde X_2, \tilde\Gamma_2)$ that contains every isomorphism class of such pairs. We can take the double cover of this family obtained by labelling the preimages of  the node of $\tilde\Gamma_2$ in the normalization map $\tilde \Gamma_2^{\nu}\to \tilde \Gamma_2$. By Remark \ref{rem: switch}, an irreducible  component of this double cover contains all the isomorphism classes of pairs of type E + labelling of the preimages of the node, and therefore surfaces of type DE give an irreducible locus  $\gothU^{DE} \subset\overline{\gothM}_{1,3}$ of dimension 30. 
 
 Now let $X$ be a surface of type DE and assume that there is a one parameter deformation $\kx\to C$ over a smooth curve such that the general fiber is not of type DE. Since the Cartier index is lower semicontinuous,  because $\omega_{\kx/C}\refl{m}$ being locally free is an open condition, up to shrinking $C$ we may assume that all fibres of $\kx$ are 2-Gorenstein and not of type DE except the central fibres. Then by our classification results (Theorem \ref{thm: four types}) the fibres of $X$ are smoothable, so by picking another family we may assume that every fibre except $X$ is smooth.

 The general fibre $X_t$ thus carries an involution
$\sigma_t$, induced by the bi-canonical map, which acts trivially on  $m$-canonical system for $m\le 4$ and such that $\chi(X_t/\sigma_t) = 1$.  By \cite[Prop. 2.6]{FPRR22}, the involutions on the general fibres extend to give a global fibrewise involutionon  on the family $\kx/C$, so there is an involution $\sigma$ on $X$ contradicting Lemma \ref{lem: DE no involution}. 

Therefore any small deformation of a surface of type DE is again a surface of type DE, the locus $\gothU^{DE}$ does not intersect the closure of the Gieseker component and  its closure $\overline\gothM^{DE}$ is an irreducible component of the moduli space. 
\end{proof}

 \subsection{Algebraic description}
  Here we reverse the approach we used in the non-reduced case, and deduce the equations of the surfaces of type DE from their geometric description via Koll\'ar's glueing. 
More precisely, by the glueing principle for pluricanonical sections \cite[Prop.~5.8]{KollarSMMP} the glueing diagram \eqref{eq: glueing diagram DE} lets us compute the canonical ring as a pullback ring in the diagram
\[\begin{tikzcd}
R(X, K_X) \dar[hookrightarrow]{\tilde \pi^*} \rar & R(\Gamma^\nu, K_X|_{\Gamma^\nu})\dar[hookrightarrow]\\ 
R(\tilde X_1, \tilde \Gamma_1) \times R(\tilde X_2, \tilde \Gamma_2) \rar & R(\tilde \Gamma_1^\nu, \tilde \Gamma_1|_{\tilde \Gamma_1^\nu})
\times
R(\tilde \Gamma_2^\nu, \tilde \Gamma_2|_{\tilde \Gamma_2^\nu}),
\end{tikzcd}
\]
where we use the superscript $(-)^\nu$ to denote normalisation and have incorporated the fact that $K_{\tilde X_i} = 0$. 

Since  the glueing involution $\tau$ identifies $\tau(\tilde \Gamma_1 )\isom \tilde \Gamma_2^\nu \isom \Gamma^\nu$ we can rewrite the pullback diagram as 
\begin{equation}\label{eq: pullback ring}
\begin{tikzcd}
R(X, K_X) \dar \rar & R(\tilde X_1, \tilde \Gamma_1)\dar{\alpha^*}\\ 
 R(\tilde X_2, \tilde \Gamma_2) \rar{\beta^*}& R(\Gamma^\nu, K_X|_{\Gamma^\nu}).
\end{tikzcd}
\end{equation}

To make the above explicit, we fix algebraic descriptions of the $S_2$-fication of the two components. 
\begin{lem}\label{lem:  X_1 pinched}
Let $(\tilde  X_1,\tilde \Gamma_1)$ be a surface of type D with $B_5$ bitangent to $L$ as in Proposition \ref{prop: DE geometry}. Then  we can choose generators for the section ring $R(\tilde X_1, \tilde \Gamma_1)$ such that 
\begin{equation}\label{eq: X_1 tilde}
\begin{split}
\tilde  X_1\colon& \quad \left\{z^2=y_0(y_1+y_0)^2(y_1-y_0)^2+x^2f_8(x,y_0,y_1)\right\}\subset \IP(1_x,2_{y_0},2_{y_1}, 5_z) \\
\tilde \Gamma_1 \colon & \quad   \tilde  X_1\cap\{x=0\}\\
& \quad r=(0:0:1:0),\, s_1=(0:1:1:0),\, s_2=(0:1:-1:0)
\end{split}
\end{equation}
where $f_8(x,y_0,y_1)$ is homogeneous of weighted degree $8$.

Conversely, every sufficiently general $f_8$ will define a pair $(\tilde X_1, \tilde \Gamma_1)$ as in Proposition \ref{prop: DE geometry}.
\end{lem}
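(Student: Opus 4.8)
The plan is to bootstrap from Proposition \ref{prop: type D}. Since $(\tilde X_1,\tilde\Gamma_1)$ is of type D, that proposition gives $R(\tilde X_1,\tilde\Gamma_1)\isom \IC[x,y_0,y_1,z]/(z^2-f_{10}(x,y_0,y_1))$ with $\deg(x,y_0,y_1,z)=(1,2,2,5)$, and the degree‑$2$ truncation realises $\tilde X_1$ as the double cover $f\colon\tilde X_1\to\IP(1,2,2)=\IP^2$ branched over $L\cup B_5$, where $L=\{x=0\}$, $B_5=\{f_{10}=0\}$, $f^*L=2\tilde\Gamma_1$, and $\tilde\Gamma_1=\{x=0\}\cap\tilde X_1$. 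The first step is a parity count: a monomial of $f_{10}$ containing an odd power of $x$ would contribute an odd $x$‑degree and an even $y_0,y_1$‑degree, hence odd total degree, which is impossible as $\deg f_{10}=10$. Therefore $f_{10}(x,y_0,y_1)=f_{10}(0,y_0,y_1)+x^2f_8(x,y_0,y_1)$ for a unique homogeneous $f_8$ of weighted degree $8$.

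Next I would use the bitangency hypothesis of Proposition \ref{prop: DE geometry}$(i)$: the divisor $B_5\cap L$, which is the degree‑$5$ divisor cut out by $f_{10}(0,y_0,y_1)$ on $L\isom\IP^1$ with homogeneous coordinates $y_0,y_1$, equals $r+2s_1+2s_2$ for three distinct points. Since $\Aut(\IP^1)$ acts $3$‑transitively, after a linear substitution in $y_0,y_1$ — which extends to an automorphism of $\IP(1,2,2,5)$ fixing $x$ and $z$, hence preserves the shape of the presentation and merely rearranges the fixed boundary curve $\{x=0\}$ — we may assume $r=\{y_0=0\}$, $s_1=\{y_1-y_0=0\}$, $s_2=\{y_1+y_0=0\}$ on $L$. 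Then $f_{10}(0,y_0,y_1)=c\,y_0(y_1-y_0)^2(y_1+y_0)^2$ with $c\in\IC^*$, and after rescaling $z\mapsto\sqrt c\,z$ (absorbing the resulting scalar into $f_8$) we normalise $c=1$, obtaining the equation displayed in \eqref{eq: X_1 tilde}. Because $r,s_1,s_2$ lie on $L$, which is contained in the branch locus, their $z$‑coordinate vanishes; reading off the $(y_0:y_1)$‑coordinates gives $r=(0:0:1:0)$, $s_1=(0:1:1:0)$, $s_2=(0:1:-1:0)$, as claimed.

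For the converse, given a sufficiently general homogeneous $f_8$ of weighted degree $8$, I would set $f_{10}:=y_0(y_1-y_0)^2(y_1+y_0)^2+x^2f_8$, put $\tilde X_1:=\Proj\IC[x,y_0,y_1,z]/(z^2-f_{10})\subset\IP(1,2,2,5)$ and $\tilde\Gamma_1:=\{x=0\}\cap\tilde X_1$. Since $z^2=1$ at $(0:0:0:1)$, the surface $\tilde X_1$ avoids the only point of $\IP(1,2,2,5)$ where $\ko(2)$ fails to be invertible, so $2\tilde\Gamma_1$ is Cartier; adjunction for weighted hypersurfaces gives $\omega_{\tilde X_1}=\ko_{\tilde X_1}(10-1-2-2-5)=\ko_{\tilde X_1}$; one computes $\tilde\Gamma_1^2=\tfrac12$ and, from the degree‑$2$ truncation, that $f$ is a double cover of $\IP^2$ branched over $L\cup B_5$ with $\tilde\Gamma_1=\tfrac12 f^*L$ of arithmetic genus $0$ and $\chi(\tilde X_1)=2$. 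By construction $B_5\cap L=r+2s_1+2s_2$ with the three points distinct, whence $(\IP^2,\tfrac12(L+B_5))$ is lc (equivalently, the singularities of $\tilde X_1$ are Du Val, checked locally below), so $(\tilde X_1,\tilde\Gamma_1)$ is a type‑D pair satisfying condition $(i)$ of Proposition \ref{prop: DE geometry} and therefore occurs as there. The remaining genericity content — that the general such $\tilde X_1$ is normal with exactly one $A_1$ point at $r$ and $A_3$ points at $s_1,s_2$ and is otherwise smooth, and that $\tilde\Gamma_1\isom\IP^1$ — is a local computation in the charts $\{y_1\ne0\}$ and $\{y_0\ne0\}\cong\IA^3/\mu_2$.

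The main obstacle I anticipate is exactly this last local analysis: in the $\mu_2$‑quotient chart at $s_1$ (and $s_2$) the surface is the quotient of an ordinary double point by the natural involution, which one must recognise as an $A_3$ singularity, while the same involution swaps the two local branches of $\{x=0\}$, so that $\tilde\Gamma_1$ remains smooth there. Carrying this out carefully is what confirms that the pair really matches the geometric picture of Proposition \ref{prop: DE geometry} and Figure \ref{fig: component DE}, and not merely its numerical invariants; everything else in the argument is bookkeeping with weighted‑homogeneous coordinates.
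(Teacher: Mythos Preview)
Your proof is correct and follows essentially the same route as the paper: start from the presentation in Proposition~\ref{prop: type D}, observe that $f_{10}|_{x=0}$ has the prescribed root pattern by bitangency, and normalise by a coordinate change. You are considerably more thorough than the paper---which omits the parity argument for the $x^2f_8$ splitting and does not spell out the converse at all---and the local singularity analysis you flag at the end, while a reasonable sanity check, is not strictly required for the lemma as stated.
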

\begin{proof} As shown in Proposition  \ref{prop: type D}, $\tilde  X_1$ is a hypersurface $z^2=f_{10}(x,y_0,y_1)$ in $\pp(1,2,2,5)$. 
The branch curve of the double cover $\tilde  X_1\to\pp^2\cong\pp(1,2,2)$ consists of the curve $B_5$ defined by $\{f_{10}=0\}$ together with the distinguished line $L$ defined by $\{x=0\}$. 
The bitangency condition means that the quintic $f_{10}|_{x=0}$ has two double roots, and after a coordinates change we may write $f_{10}|_{x=0}=y_0(y_1+y_0)^2(y_1-y_0)^2$.
\end{proof}

\begin{lem}\label{lem: X_2 pinched}
Let $(\tilde  X_2,\tilde \Gamma_2)$ be a surface of elliptic type E with $\tilde \Gamma_2$ a nodal rational curve as in Proposition \ref{prop: DE geometry}. Then  we can choose generators for the section ring $R(\tilde X_2, \tilde \Gamma_2)$ such that 
\begin{equation}\label{eq: X_2 tilde}
\begin{split}
\tilde  X_2\colon& \quad \left\{w^2=v(v-u_1^4)^2+u_0g_{11}(u_0,u_1,v)\right\} \subset \IP(1_{u_0},1_{y_1},4_{v}, 6_w) \\
\tilde \Gamma_2 \colon & \quad   \tilde  X_1\cap\{u_0=0\} = \left\{w^2=v(v-u_1^4)^2\right\}\subset\IP(1_{y_1},4_{v}, 6_w)  \\
& \quad q=(0:1:1:0),\, p=(0:0:1:1),
\end{split}
\end{equation}
where $g_{11}(u_0,u_1,v)$ is homogeneous of weighted degree $11$, $p$ is the base point of the ellipitic pencil $|\tilde \Gamma_2|$, and $\tilde \Gamma_2$ has a node at $q$. 

Conversely, every sufficiently general $g_{11}$ will define a pair $(\tilde X_2, \tilde \Gamma_2)$ as in Proposition \ref{prop: DE geometry}.

\end{lem}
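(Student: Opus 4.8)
The plan is to derive everything from Proposition~\ref{prop: type E} by a careful choice of coordinates. By that proposition $\tilde X_2$ is a double cover $\{w^2=g_{12}(u_0,u_1,v)\}\subset\IP(1,1,4,6)$ of $\kc_4=\IP(1,1,4)$, and the elliptic pencil $|\tilde\Gamma_2|=|\Delta|$ is the pull-back of the pencil $|\ko_{\kc_4}(1)|$ of rulings of the cone, so that $H^0(\tilde X_2,\tilde\Gamma_2)$ is two-dimensional and spanned by the pull-backs of the two degree-one coordinates on $\kc_4$. The first step would be to apply an element of $\mathrm{GL}_2$ acting on these two sections so that the section vanishing on the given curve $\tilde\Gamma_2$ becomes one of the generators, call it $u_0$ (the other, $u_1$, a complement). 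Then $\tilde\Gamma_2=\tilde X_2\cap\{u_0=0\}=\{w^2=h(u_1,v)\}\subset\IP(1_{u_1},4_v,6_w)$ with $h(u_1,v):=g_{12}(0,u_1,v)=a_0u_1^{12}+a_1u_1^8v+a_2u_1^4v^2+a_3v^3$.

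Next I would analyse $\tilde\Gamma_2$ inside $\IP(1_{u_1},4_v,6_w)$. Since $h$ does not vanish at $u_1=v=0$, the curve $\tilde\Gamma_2$ misses the singular point $(0:0:1)$ and the projection $\tilde\Gamma_2\to\IP(1_{u_1},4_v)\isom\IP^1$ is a finite double cover; regarding $h$ as a binary cubic in the coordinates $[u_1^4:v]$, its branch divisor consists of the three zeros of $h$ together with the point $u_1=0$ (the latter coming from the $\frac14$-orbifold structure), so for general $\tilde X_2$ it is a smooth elliptic curve. The hypothesis that $\tilde\Gamma_2$ be a nodal rational curve of arithmetic genus one then forces $h$ to have exactly one double zero and no triple zero; since $a_3\neq0$ no zero sits at $[0:1]$, so $h=a_3\,(v-c_1u_1^4)^2(v-c_2u_1^4)$ with $c_1\neq c_2$ both in $\IC$, and the node of $\tilde\Gamma_2$ lies over the double zero.

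For the normalisation step, the changes of generators of $R(\tilde X_2,\tilde\Gamma_2)$ preserving the flag fixed above and the form of the equation $w^2=g_{12}$ restrict on $\{u_0=0\}$ to $u_1\mapsto\lambda_1u_1$, $v\mapsto\lambda_v v+\nu u_1^4$, $w\mapsto\lambda_w w$, which act on $[u_1^4:v]$ as the affine group fixing $[0:1]$, hence transitively on ordered pairs of distinct points of $\IC$. I would use this to send the simple zero of $h$ to $v=0$ and the double zero to $v=u_1^4$, and then rescale $w$ so that the leading coefficient of $h$ becomes $1$, arriving at $h=v(v-u_1^4)^2$. The node of $\tilde\Gamma_2$ is now at $q=(0:1:1:0)$; since $g_{12}-v(v-u_1^4)^2$ vanishes on $\{u_0=0\}$ it equals $u_0g_{11}$ for a weighted form $g_{11}$ of degree $11$, which yields the displayed equation of $\tilde X_2$. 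Finally, the base point $p$ of $|\tilde\Gamma_2|$ is the $A_1$ point of $\tilde X_2$ lying over the vertex $(0:0:1)$ of $\kc_4$, i.e. the unique point of $\tilde X_2$ on the line $\{u_0=u_1=0\}$; as $g_{12}(0,0,v)=v^3$ this point is $(0:0:1:1)$.

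For the converse I would argue that for general $g_{11}$ the form $g_{12}=v(v-u_1^4)^2+u_0g_{11}$ has $v^3$-coefficient $1$ and varies in a family large enough that Proposition~\ref{prop: type E} applies: the locus of degree-$12$ forms for which the associated double cover of $\kc_4$ acquires singularities worse than the expected $A_1$, or for which $(\kc_4,\frac12 B)$ fails to be lc, is a proper closed subset, and a Bertini-type argument shows that the constrained family is not contained in it. Hence $\tilde X_2=\{w^2=g_{12}\}$ is a surface of type~E with $A_1$ point at $p=(0:0:1:1)$, and $\tilde\Gamma_2=\tilde X_2\cap\{u_0=0\}$, whose restricted equation $w^2=v(v-u_1^4)^2$ has a double zero, is a nodal rational curve of arithmetic genus one with node $q$; so $(\tilde X_2,\tilde\Gamma_2)$ is a pair as in Proposition~\ref{prop: DE geometry}(i). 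The only genuinely delicate point is the weighted-projective bookkeeping near the orbifold points of $\IP(1,1,4,6)$ needed to turn ``$\tilde\Gamma_2$ nodal of arithmetic genus one'' into the coefficient condition on $h$ and to justify the available coordinate changes; the rest is a routine computation.
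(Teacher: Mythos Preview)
Your proof is correct and follows essentially the same approach as the paper: start from the hypersurface description $w^2=g_{12}$ in $\IP(1,1,4,6)$ given by Proposition~\ref{prop: type E}, choose $u_0$ so that $\tilde\Gamma_2=\{u_0=0\}$, observe that the nodal hypothesis forces $g_{12}|_{u_0=0}$ to have a double root in $[u_1^4:v]$, and normalise by an affine coordinate change to reach $v(v-u_1^4)^2$. The paper's proof is a terse three-line version of exactly this; your added detail on the branch locus of $\tilde\Gamma_2\to\IP(1,4)$ and on the admissible coordinate changes is a helpful expansion rather than a different argument.
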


\begin{proof} As shown in Proposition  \ref{prop: type E}, $\tilde X_2$ is a hypersurface $w^2=g_{12}(u_0,u_1,v)$ in $\pp(1,1,4,6)$. 
The elliptic pencil  is spanned by $\langle u_0,u_1\rangle$ and we suppose that the nodal rational curve $\tilde \Gamma_2$ is defined by $\{u_0=0\}$.
 Since $\tilde \Gamma_2$ is nodal, it follows that $g_{12}|_{u_0=0}$ has a double root and we choose coordinates so that $g_{12}|_{u_0=0}=v(v-u_1^4)^2$. The  node  $q$ of $\tilde \Gamma_2$ corresponds to this double root.
\end{proof}
Having identified the components, we now turn to the glueing.
\begin{lem}
Identify $\Gamma^\nu \isom \IP^1\isom  \IP(1_t,2_s)$. Then we can choose the glueing invlution $\tau$ such that the maps $\alpha^*$ and $\beta^*$ in \eqref{eq: pullback ring} are, with respect to the coordinates chosen above, induced by 
\[ \begin{tikzcd}[row sep = tiny] 
\alpha\colon  \pp(1,2)   \rar & \pp(1,2,2,5)   \\  
(t,s)  \rar[mapsto] & (0,t^2,s,t(s^2-t^4))    \end{tikzcd}  \qquad \begin{tikzcd} [row sep = tiny] 
\beta\colon  \pp(1,2)   \rar & \pp(1,1,4,6)   \\  
(t,s)  \rar[mapsto] & (0,t,s^2,s(s^2-t^4) ) \end{tikzcd}.
\]
\end{lem}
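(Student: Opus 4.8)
The plan is to make the target ring $R(\Gamma^\nu,K_X|_{\Gamma^\nu})$ and the two source rings completely explicit, and then to pin down $\alpha^*$ and $\beta^*$ by combining unique factorisation in a polynomial ring with the compatibility of differents that Koll\'ar's glueing forces on $\tau$. For the target, adjunction for the conductor gives $K_X|_{\Gamma^\nu}=K_{\Gamma^\nu}+\Diff_{\Gamma^\nu}(0)$, and the computation carried out in the proof of Proposition~\ref{prop: DE geometry} yields $\Diff_{\Gamma^\nu}(0)=\tfrac12 r+s_1+s_2$ when read off from $\tilde X_1$ and $\tfrac12 p+q_1+q_2$ when read off from $\tilde X_2$. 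Computing the cohomology of the reflexive powers of this $\IQ$-divisor of degree $\tfrac12$ (using $q(\Gamma^\nu)=0$ and generalised Kodaira vanishing) shows that $R(\Gamma^\nu,K_X|_{\Gamma^\nu})$ has the Hilbert series of $\IC[t,s]$ with $\deg(t,s)=(1,2)$, which is precisely what the identification $\Gamma^\nu\isom\pp(1_t,2_s)$ records. Such an identification necessarily sends $(0:1)$, the point where $\OO(1)$ fails to be invertible, to the support of the fractional part of the different, so both $r$ and $p$ go to $(0:1)$. It is unique only up to the three-parameter group of graded automorphisms $t\mapsto\lambda t$, $s\mapsto\mu s+\nu t^2$ of $\IC[t,s]$, and this is the freedom I would spend to normalise $\alpha$.

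Since $\tilde\Gamma_1=\{x=0\}$ one has $\alpha^*(x)=0$, and $\alpha^*(y_0),\alpha^*(y_1),\alpha^*(z)$ are forms of weights $2,2,5$ satisfying the equation of \eqref{eq: X_1 tilde} with $x$ set to $0$, i.e.\ $\alpha^*(z)^2=\alpha^*(y_0)\bigl(\alpha^*(y_1)^2-\alpha^*(y_0)^2\bigr)^2$. Unique factorisation forces $\alpha^*(y_0)$ to be the square of a weight-one form, hence a scalar multiple of $t^2$, and $\alpha^*(y_1)$ must genuinely involve $s$, since otherwise $\alpha$ would be constant rather than surjective onto the curve $\tilde\Gamma_1$. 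Spending $t\mapsto\lambda t$ to arrange $\alpha^*(y_0)=t^2$ and then $s\mapsto\mu s+\nu t^2$ to arrange $\alpha^*(y_1)=s$, the equation forces $\alpha^*(z)=\pm t(s^2-t^4)$, and the sign is removed by the automorphism $z\mapsto -z$ of $\tilde X_1$. This gives $\alpha$ as stated and, read backwards, identifies $\alpha^{-1}(r)=(0:1)$ and $\{\alpha^{-1}(s_1),\alpha^{-1}(s_2)\}=\{(1:1),(1:-1)\}$.

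For $\beta^*$ no coordinate freedom on $\Gamma^\nu$ remains. Again $\beta^*(u_0)=0$; putting $u_0=0$ in \eqref{eq: X_2 tilde} gives $\beta^*(w)^2=\beta^*(v)\bigl(\beta^*(v)-\beta^*(u_1)^4\bigr)^2$, so $\beta^*(u_1)=\gamma t$, $\beta^*(v)=M^2$ with $M=at^2+bs$ a weight-two form ($b\neq 0$, else $\beta$ is constant), and $\beta^*(w)=\pm M(M^2-\gamma^4 t^4)$. The glueing involution $\tau$ must match differents, i.e.\ send $p$ to $r$ and $\{q_1,q_2\}$ to $\{s_1,s_2\}$, which in the coordinates fixed above means $\beta^{-1}(p)=(0:1)$ and $\beta^{-1}(\{q_1,q_2\})=\{(1:1),(1:-1)\}$; the first holds automatically, and locating $p=(0:0:1:1)$ and the node $q=(0:1:1:0)$ of $\tilde\Gamma_2$ through $\beta$ forces $a=0$ and $b=\pm\gamma^2$. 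Choosing a convenient representative in $\pp(1,1,4,6)$ and absorbing the remaining sign by $w\mapsto -w$ on $\tilde X_2$ then yields $\beta(t:s)=(0:t:s^2:s(s^2-t^4))$; the two labellings of $q_1,q_2$ correspond to the two admissible glueings, which are isomorphic by Remark~\ref{rem: switch}, so $\tau$ is thereby fixed up to the allowed ambiguity. I expect the main obstacle to be exactly this last coordinate bookkeeping: one has to check that the two-dimensional automorphism group of $\pp(1,2)$ (the stabiliser of $(0:1)$), together with the sign automorphisms $z\mapsto -z$ and $w\mapsto -w$ of the two hypersurfaces, is precisely enough to normalise $\alpha$, after which matching the differents leaves no slack in $\beta$.
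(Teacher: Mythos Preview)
Your proof is correct but takes a different route from the paper's. The paper's proof is purely a verification: it simply exhibits the maps $\alpha$ and $\beta$, checks that their images lie in $\tilde\Gamma_1$ and $\tilde\Gamma_2$, computes $\alpha(1:\pm1)=s_i$, $\beta(1:\pm1)=q$, $\alpha(0:1)=r$, $\beta(0:1)=p$ to confirm that the differents are matched, and observes that the gradings are compatible. Since the lemma only asserts that one \emph{can choose} $\tau$ so that the maps take this form, exhibiting a working choice suffices.

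You instead derive the maps from first principles: you compute the section ring $R(\Gamma^\nu,K_X|_{\Gamma^\nu})$ explicitly as $\IC[t,s]$ with weights $(1,2)$, pin down the image of the half-point of the different at $(0:1)$, and then use unique factorisation in $\IC[t,s]$ together with the automorphism freedom of $\IP(1,2)$ to normalise $\alpha$ and subsequently determine $\beta$ from the different-matching constraint. This is more work, but it explains \emph{why} the formulas must look the way they do and makes transparent exactly how much freedom there is (namely, the sign swaps absorbed by $z\mapsto -z$, $w\mapsto -w$, and the labelling ambiguity handled by Remark~\ref{rem: switch}). The paper's verification is quicker; your derivation is more informative.
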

\begin{proof}
 By definition, the image of $\alpha$ is $\tilde \Gamma_1\subset \tilde X_1$ and the image of $\beta$ is $\tilde \Gamma_2\subset \tilde X_2$. 
 
 By Remark \ref{rem: switch} the choice of involution does not matter as long as it identifies the preimages of the node $q\in \tilde \Gamma_2$ with $s_1$ and $s_2$ and the point $p$ with the point $r$. 
 This is satisfied here, because 
\begin{align*}
&\alpha(1:\pm1)=(0:1:\pm1:0)=s_i &\text{ and }&&\beta(1,\pm1)=(0:1:1:0)=q,\\ &\alpha(0:1)=(0:0:1:0)=r &\text{ and } &&\beta(0:1)=(0:0:1:1)=p.
\end{align*}
Note that our choice of grading respects the gradings of the rings on the components, which concludes the proof. 
\end{proof}

These descriptions and \eqref{eq: pullback ring} now allow us to compute the canonical ring 
\[R(X, K_X) = \{(\xi,\eta)\in R(\tilde X_1, \tilde \Gamma_1)\times R(\tilde X_2, \tilde \Gamma_2) \mid \alpha^*(\xi)=\beta^*(\eta)\}.\]
of $X$ explicitly, but it turns out that it is convenient to treat the components $X_i \subset X$ first individually. 

It is elementary but tedious   to check that 
a minimal set of generators for $R(X, K_X)$ is listed in Table \ref{tab: gens DE}. Consequently, we have  $X_1=X\cap\{a_1=b_1=0\}$ and $X_2=X\cap\{a_0=c=0\}$.
\begin{table}[ht]
 \caption{Generators of the canonical ring of type DE and their restriction to the components}
 \label{tab: gens DE}
 \begin{tabular}{ccccc}
\toprule
degree & name & $R(X, K_X)$  & on $\tilde X_1$  &  on $\tilde X_2$\\
\midrule
1 & $a_0$ &$ (x,0)$ & $x$ &  \\ 
1 & $a_1$ & $(0,u_0)$ &  &  $u_0$\\ 
 2 & $b_0$ &  $(y_0,u_1^2)$ & $y_0$ & $u_1^2$\\
 2 & $b_1$ & $(0,u_0u_1)$  &  & $u_0u_1$\\
 3 & $ c$ & $(xy_1,0) $  & $ xy_1 $  &    \\ 
4 & $ d$ & $(y_1^2,v)$& $ y_1^2$& $ v$  \\
5 & $e$ &$(z,u_1(v-u_1^4))$& $z$& $u_1(v-u_1^4)$  \\
6 & $f$ & $(y_1(y_1^2-y_0^2),w)$& $y_1(y_1^2-y_0^2)$& $w$ \\
7 & $g$ & $(y_1z,u_1w)$& $y_1z $& $u_1w $\\
\bottomrule
 \end{tabular}
\end{table}

\begin{lem}\label{lem: X_1}
The generators in Table \ref{tab: gens DE} induce an immersion
\[ \tilde X_1 \to X_1 \subset \pp(1_{a_0},2_{b_0},3_c,4_d,5_e,6_f,7_g)\]
which maps both points $s_i$ to $(0:1:0:0:0:0:0)$ and is an embedding elsewhere (compare Figure \ref{fig: component DE}). 
The ideal of $X_1$ is  generated by 
\[
\rank\begin{pmatrix} a_0 & d-b_0^2 & e & c & f & g \\ c & f & g & a_0d & (d-b_0^2)d & ed \end{pmatrix}\le1,\\
\]
\begin{align*}
e^2&=b_0(d-b_0^2)^2+a_0^2A_8+a_0cB_6,\\
eg&=b_0f(d-b_0^2)+a_0cA_8+a_0^2dB_6,\\
g^2&=b_0d(d-b_0^2)^2+a_0^2dA_8+a_0cdB_6\ \ (=de^2), 
\end{align*}
where $A_8$, $B_6$ are certain polynomials in $a_0,b_0,c,d,f$.
\end{lem}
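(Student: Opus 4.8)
The plan is to work out the subring $R(\tilde X_1, \tilde\Gamma_1) = R(\tilde X_1,\OO_{\tilde X_1}(1))$ explicitly from the hypersurface equation \eqref{eq: X_1 tilde} and read off generators and relations from the restriction data in Table \ref{tab: gens DE}. First I would note that $\tilde X_1 = \{z^2 = y_0(y_1^2-y_0^2)^2 + x^2 f_8(x,y_0,y_1)\} \subset \IP(1_x,2_{y_0},2_{y_1},5_z)$, and the section ring generators restricting to this component are $a_0 = x$, $b_0 = y_0$, $c = xy_1$, $d = y_1^2$, $e = z$, $f = y_1(y_1^2 - y_0^2)$, $g = y_1 z$ (the remaining generators $a_1,b_1$ restrict to $0$). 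So $X_1$ is the image of $\tilde X_1$ in $\IP(1,2,3,4,5,6,7)$ under the map given by these seven monomials (times $z$-factors); it is an immersion because $y_1$ is only accessible through $c = xy_1$, $d = y_1^2$, $f,g$, so the two points $x = 0$, $y_1^2 = y_0^2$, i.e.\ $s_1,s_2$, both map to $(0:1:0:0:0:0:0)$ and no other collapsing occurs, while on the locus $x\neq 0$ one recovers $y_1 = c/a_0$ and everything is separated. This establishes the first sentence.

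Next I would derive the relations. The $2\times 2$ minors of $\left(\begin{smallmatrix} a_0 & d-b_0^2 & e & c & f & g \\ c & f & g & a_0 d & (d-b_0^2)d & e d\end{smallmatrix}\right)$ are exactly the obvious monomial identities among $x, y_0, y_1, z$: using $c = a_0\cdot (c/a_0)$ one checks $a_0 f = c(d-b_0^2)$ is $x\cdot y_1(y_1^2-y_0^2) \cdot 1 = xy_1\cdot(y_1^2-y_0^2)$, $a_0 g = c e$ is $x\cdot y_1 z = xy_1\cdot z$, $c g = a_0 d e$ is $xy_1\cdot y_1 z = x\cdot y_1^2\cdot z$, $f g = (d-b_0^2)d e$, $c f = a_0 d(d-b_0^2)$, $(d-b_0^2)g = fe$, and $e g = f\cdot$(nothing new) — in short all the syzygies forced by $c^2 = a_0^2 d$, $f = $ "$d\cdot(\text{stuff})$" etc. For the three quadratic-in-$(e,g)$ equations: $e^2 = z^2 = y_0(y_1^2-y_0^2)^2 + x^2 f_8 = b_0(d-b_0^2)^2 + a_0^2 f_8(x,y_0,y_1)$; now $f_8$ is a weight-$8$ polynomial in $x,y_0,y_1$, and I must re-express it in terms of the available generators $a_0,b_0,c,d,f$ restricted to $X_1$. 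Monomials in $f_8$ of even degree in $y_1$ become polynomials in $d = y_1^2$, $b_0 = y_0$, $a_0 = x$ directly; monomials of odd degree in $y_1$ carry at least one extra factor and are multiples of $x y_1 = c$ (for the part divisible by $x$) after pulling out an $x$, or of $y_1(y_1^2-y_0^2) = f$ once a factor $(y_1^2-y_0^2)$ is extracted — a short bookkeeping argument shows every weight-$8$ monomial in $x,y_0,y_1$ restricted to $X_1$ lies in the subring generated by $a_0,b_0,c,d,f$, so $f_8|_{X_1} = A_8 + (c/a_0^{?})\cdots$; grouping, one writes $a_0^2 f_8 = a_0^2 A_8 + a_0 c B_6$ with $A_8, B_6 \in \IC[a_0,b_0,c,d,f]$ of the indicated weights. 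Multiplying the first equation by suitable monomials and using $g = y_1 z$, $e = z$, $d = y_1^2$ gives $eg = y_1 z^2 = y_1(b_0(d-b_0^2)^2 + a_0^2 A_8 + a_0 c B_6)$; since $y_1 b_0(d-b_0^2)^2 = b_0 f (d-b_0^2)$, $y_1 a_0^2 = a_0 c$, $y_1 a_0 c = a_0 y_1^2 = a_0 d\cdot(\text{need }a_0)$ — wait, $y_1\cdot a_0 c = a_0 \cdot y_1\cdot xy_1 = a_0 x y_1^2 = a_0^2 d$, so $y_1 a_0 c B_6 = a_0^2 d B_6$ — one gets $eg = b_0 f(d-b_0^2) + a_0 c A_8 + a_0^2 d B_6$. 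Similarly $g^2 = y_1^2 z^2 = d\cdot e^2$, giving the third equation.

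The main obstacle, and the only genuinely delicate point, is verifying that the three displayed quadrics together with the rank-$\le 1$ condition generate the \emph{whole} ideal of $X_1$ — i.e.\ that there are no further relations — and that the coordinate ring they define has the right Hilbert series (equivalently, that the scheme they cut out is reduced, irreducible, and equals $X_1$). I would handle this either (i) by a direct dimension count matching the Hilbert series of $R(\tilde X_1,\tilde\Gamma_1)$ computed from the hypersurface $z^2 = f_{10}$ in $\IP(1,2,2,5)$ against the Hilbert series of the proposed presentation, using that the ring is Cohen--Macaulay (being a hypersurface, $\tilde X_1$ is CM and the immersion identifies $R(\tilde X_1,\tilde\Gamma_1)$ with $\OO_{X_1}$ on a normal — in fact slc K3 — surface), or (ii) by appealing to a Gorenstein-format / unprojection computation as was done for type B in Proposition \ref{prop: algebraic type B}, noting that the rank condition on a $2\times 6$ matrix plus three extra quadrics is precisely the shape of a codimension-three (or unprojected) format whose resolution can be checked by computer algebra (\cite{VersalDeformationsArticle}). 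In either case the ``certain polynomials $A_8, B_6$'' are pinned down by the explicit substitution above, and the statement follows.
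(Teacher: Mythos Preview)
Your approach is essentially the same as the paper's: both derive the determinantal relations as encoding the image of the map $\IP(1,2,2,5)\to\IP(1,2,3,4,5,6,7)$ given by $(x:y_0:xy_1:y_1^2:z:y_1(y_1^2-y_0^2):y_1z)$, and obtain the three further equations by rewriting $z^2$, $y_1z^2$, $y_1^2z^2$ via the hypersurface equation. The paper's proof is a two-line sketch invoking computer verification for the ideal-generation claim; your proposal spells out the hand computation and explicitly flags that last step as needing a Hilbert-series or computer check, which is exactly what the paper does.

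Two small points worth cleaning up. First, your derivation of $A_8,B_6$ can be made less hesitant: in $f_8(x,y_0,y_1)$ every monomial $x^iy_0^jy_1^k$ has $i$ even (since $i=8-2j-2k$), so $x^2f_8$ splits cleanly into terms with $k$ even, contributing $a_0^{i+2}b_0^jd^{k/2}$ to $a_0^2A_8$, and terms with $k$ odd, contributing $a_0^{i+1}c\,b_0^jd^{(k-1)/2}$ to $a_0cB_6$; no invocation of $f$ is needed at this stage. Second, your argument that the map is an embedding away from $s_1,s_2$ is correct on the chart $x\neq 0$, but on $\tilde\Gamma_1=\{x=0\}$ you should also note that $y_1$ is recovered from $f/(d-b_0^2)$ whenever $y_1^2\neq y_0^2$, and from $g/e$ whenever $z\neq 0$, leaving only $s_1,s_2$ identified. (Incidentally, the common image point has $d=b_0^2\neq 0$, so the coordinates in the statement should read $(0{:}1{:}0{:}1{:}0{:}0{:}0)$ rather than $(0{:}1{:}0{:}0{:}0{:}0{:}0)$.)
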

\begin{proof}
This can be checked with the computer, but also by hand: the determinantal relations encode the image of 
\[
\begin{tikzcd}[column sep = large]
 \IP(1,2,2,5) \arrow{rrrr}{(x: y_0: xy_1: y_1^2: z : y_1(y_1^2 - y_0^2) : y_1z)} &&&& \IP(1,2,3,4,5,6,7)
\end{tikzcd}
\]
and the polynomials $A_8$ and $B_6$ are determined from \eqref{eq: X_1 tilde} by 
\[ e^2 = z^2 = y_0(y_1+y_0)^2(y_1-y_0)^2+x^2f_8(x,y_0,y_1) = b_0(d-b_0^2)^2+a_0^2A_8+a_0cB_6.\]
The remaining two equations are induced by $g^2 = de^2$ and $eg = y_1 z^2$ using the determinantal relations.
\end{proof}

\begin{lem}\label{lem: X_2}
The generators in Table \ref{tab: gens DE} induce an embedding 
\[ \tilde X_2 \to \IP(1,1,4,6) \to \pp(1_{a_1},2_{b_0},2_{b_1},4_d,5_e,6_f,7_g)\]
identifying $X_2 \isom \tilde X_2$. The ideal  of $X_2$ is generated by 
\[
\rank\begin{pmatrix} a_1 & d-b_0^2 & f & b_1 & e & g \\ b_1 & e & g & a_1b_0 & (d-b_0^2)b_0 & fb_0 \end{pmatrix}\le1
\]
\begin{align*}
f^2&=d(d-b_0^2)^2+a_1C_{11}+b_1D_{10}\\
fg&=de(d-b_0^2)+b_1C_{11}+a_1b_0D_{10}\\
g^2&=b_0d(d-b_0^2)^2+a_1b_0C_{11}+b_0b_1D_{10} \ \ (=b_0f^2)
\end{align*}
where $C_{11}$, $D_{10}$ are general polynomials in $a_1,b_0,b_1,d,e$.
\end{lem}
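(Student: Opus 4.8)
The plan is to follow the template of the proof of Lemma~\ref{lem: X_1}, now on the elliptic component. First I would record from Table~\ref{tab: gens DE} and the normal form \eqref{eq: X_2 tilde} that on $\tilde X_2$ the seven generators restrict to
\[
(a_1,b_0,b_1,d,e,f,g)=(u_0,\ u_1^2,\ u_0u_1,\ v,\ u_1(v-u_1^4),\ w,\ u_1w),
\]
so that the morphism in the statement is the restriction to $\tilde X_2\subset\IP(1,1,4,6)$ of the base-point-free monomial map
\[
\IP(1_{u_0},1_{u_1},4_v,6_w)\to\IP(1,2,2,4,5,6,7),\qquad (u_0:u_1:v:w)\mapsto(u_0:u_1^2:u_0u_1:v:u_1(v-u_1^4):w:u_1w).
\]
I would then check this restriction is a closed embedding of $\tilde X_2$, hence identifies $X_2=X\cap\{a_0=c=0\}$ with $\tilde X_2$: on the chart $a_1=u_0\neq0$ one recovers $u_1=b_1/a_1$, on the chart $u_1\neq0$ one recovers $u_0=a_1$ after normalising $u_1$, and the remaining point $p=(0:0:1:1)$ is recovered there via $u_1=g/f=0$; putting the charts together gives injectivity and the immersion property, and all of this is in any case a routine computer verification, as in Lemma~\ref{lem: X_1}.

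Next I would verify the determinantal part: evaluated on $\tilde X_2$ every column of the displayed $2\times6$ matrix equals a scalar multiple of $(1,u_1)$, so the matrix has rank $\le1$; conceptually these minors record the parametrisation together with the operation of ``multiplication by $u_1$'', which the listed generators themselves do not see because $\tilde\Gamma_2$ is not projectively normal in this embedding. In particular the $2\times2$ minor of the columns $(f,g)^{\top}$ and $(g,b_0f)^{\top}$ is exactly $g^2-b_0f^2$, which already gives $g^2=b_0f^2$.

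Then I would extract the three remaining relations from the equation $w^2=g_{12}$ of $\tilde X_2$ in \eqref{eq: X_2 tilde}. Writing $g_{12}=v(v-u_1^4)^2+u_0g_{11}(u_0,u_1,v)$ and using the splitting $\IC[u_0,u_1,v]=\IC[u_0,u_1^2,u_0u_1,v]\oplus u_1\,\IC[u_1^2,v]$ to write $g_{11}=C_{11}+u_1D_{10}$, the pieces $C_{11},D_{10}$ become polynomials in $a_1,b_0,b_1,d$ (which one may redistribute so as to involve $e$ using the minors), and one obtains
\[
f^2=w^2=d(d-b_0^2)^2+a_1C_{11}+b_1D_{10}.
\]
Multiplying this identity by $g$ and substituting the minors $a_1g=b_1f$, $(d-b_0^2)g=ef$ and $b_1g=a_1b_0f$ pulls out a common factor $f$; cancelling it (the coordinate ring of $X_2$ being a domain, as $X_2\cong\tilde X_2$ is irreducible and reduced) yields $fg=de(d-b_0^2)+b_1C_{11}+a_1b_0D_{10}$, and multiplying instead by $b_0$ — equivalently substituting $f^2$ into the minor relation $g^2=b_0f^2$ — yields $g^2=b_0d(d-b_0^2)^2+a_1b_0C_{11}+b_0b_1D_{10}$.

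Finally, to see that these equations generate the whole ideal of $X_2$, and not merely cut it out set-theoretically, I would compare Hilbert series: the proposed ideal is in the standard format for a surface through a curve of arithmetic genus one, so the Hilbert series of the quotient should match the one computed directly for the image of $\IC[a_1,b_0,b_1,d,e,f,g]$ inside $R(\tilde X_2,\tilde\Gamma_2)=\IC[u_0,u_1,v,w]/(w^2-g_{12})$, or one simply invokes the computation with \cite{VersalDeformationsArticle} as in Lemma~\ref{lem: X_1}. The step I expect to be the main obstacle is the embedding claim at the distinguished points — the $A_1$ singularity $p$ of $\tilde X_2$ and the node $q$ of $\tilde\Gamma_2$, where the chosen generators degenerate — which I would settle by the explicit chart-by-chart analysis indicated above.
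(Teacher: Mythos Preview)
Your approach is correct and essentially parallels the paper's. The one notable divergence is in the embedding claim: the paper dispatches this in a single line by observing that $a_1^2,\,b_0,\,b_1,\,d,\,f$ already generate the even subring of $\IC[u_0,u_1,v,w]$, so the map refines the $2$-Veronese isomorphism of $\IP(1,1,4,6)$ with the $\Proj$ of its even subring and is therefore automatically an embedding. This replaces your chart-by-chart analysis and sidesteps entirely the worry you flag about the distinguished points $p$ and $q$. For the ideal, the paper's argument is the same as yours but terser: the determinantal minors cut out the image of $\IP(1,1,4,6)$, and the three extra equations are induced from $w^2=g_{12}$ via \eqref{eq: X_2 tilde}. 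Your explicit splitting $g_{11}=C_{11}+u_1D_{10}$ and the derivations of $fg$ and $g^2$ are a helpful unpacking of what the paper leaves implicit.
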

\begin{proof}
Note that $a_1^2, b_0, b_1, d, f$ generate the even subring of $\IC[u_0, u_1, v, w]$, so the map is an embedding. 

As in the proof of Lemma \ref{lem: X_1}, the determinantal equations cut out the image of $\IP(1,1,4,6)$ and the remaining equations are induced from  \eqref{eq: X_2 tilde} by the equation defining $\tilde X_2$. 
\end{proof}

The canonical model of the glued surface $X$ in $\pp(1,1,2,2,3,4,5,6,7)$ can be computed as the intersection of the two former ideals:
\begin{prop}\label{prop: algebraic DE}
Let $X$ be a $2$-Gorenstein I-surface with every canonical curve
non-reduced and such that the $S_2$-fication of one component is of  type D and the other component is of elliptic type E. 
Then the  canonical model of $X$ is defined by the following 20 equations in $\pp(1_{a_0},1_{a_1},2_{b_0},2_{b_1},3_c,4_d,5_e,6_f,7_g)$:
\begin{equation}\label{eq: ideal DE}
\rank\begin{pmatrix}0&0&a_0&c\\0&0&c&a_0d\\a_1&b_1&d-b_0^2&f\\b_1&a_1b_0&e&g\end{pmatrix}\le2,
\qquad
\begin{cases}e^2&=b_0(d-b_0^2)^2+a_0^2A_8+a_0cB_6\\
eg&=b_0f(d-b_0^2)+a_0cA_8+a_0^2dB_6\\
f^2&=d(d-b_0^2)^2+a_1C_{11}+b_1D_{10}\\
fg&=de(d-b_0^2)+b_1C_{11}+a_1b_0D_{10}\\
g^2&=de^2+a_1b_0C_{11}+b_1^2D_{10}\\
(g^2&=b_0f^2+a_0^2dA_8+a_0cdB_6)
\end{cases}
\end{equation}
If $A,B,C,D$ are general, then $X$ is a stable I-surface, with $\omega_X=\OO_X(1)$ and $\omega_X^{[2]}=\OO_X(2)$ is invertible.
\end{prop}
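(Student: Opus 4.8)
The plan is to read off Proposition~\ref{prop: algebraic DE} from the material already in place. For the first implication I would argue as follows. Let $X$ be of type {DE}. By the glueing principle for pluricanonical sections \cite[Prop.~5.8]{KollarSMMP} together with the pullback square \eqref{eq: pullback ring}, the canonical ring $R(X,K_X)$ is the fibre product of $R(\tilde X_1,\tilde\Gamma_1)$ and $R(\tilde X_2,\tilde\Gamma_2)$ over $R(\Gamma^\nu,K_X|_{\Gamma^\nu})$ along the maps $\alpha^*$, $\beta^*$ made explicit above. Substituting the presentations of these rings from Lemmas~\ref{lem: X_1 pinched} and \ref{lem: X_2 pinched} and the formulas for $\alpha$, $\beta$, one checks that the nine elements of Table~\ref{tab: gens DE} lie in this fibre product and generate it; minimality and completeness of the list follow by comparing graded pieces against the Hilbert series prescribed in Definition~\ref{def: I surface}. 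In the resulting embedding one has $X=X_1\cup X_2$ with $X_1=X\cap\{a_1=b_1=0\}$ and $X_2=X\cap\{a_0=c=0\}$, so the homogeneous ideal of $X$ is $I(X_1)\cap I(X_2)$, where $I(X_i)$ is obtained from Lemma~\ref{lem: X_1} (resp.\ Lemma~\ref{lem: X_2}) by adjoining $a_1,b_1$ (resp.\ $a_0,c$). Carrying out this intersection, exploiting the determinantal shape of both ideals (or using \cite{VersalDeformationsArticle}), produces precisely the relations in \eqref{eq: ideal DE}.

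For the converse, fix general polynomials $A_8,B_6,C_{11},D_{10}$ and let $X\subset\IP(1,1,2,2,3,4,5,6,7)$ be cut out by \eqref{eq: ideal DE}. Intersecting with $\{a_1=b_1=0\}$, respectively with $\{a_0=c=0\}$, recovers by Lemmas~\ref{lem: X_1} and \ref{lem: X_2} two surfaces $X_1$ and $X_2\isom\tilde X_2$: the $S_2$-fication of $X_1$ is the type~D surface of Lemma~\ref{lem: X_1 pinched}, with the pinch point identifying $s_1$ and $s_2$, and $X_2$ is the type~E surface of Lemma~\ref{lem: X_2 pinched}. For general coefficients these are singular K3 surfaces with exactly the singularities of Proposition~\ref{prop: DE geometry}(ii), and the generality of $A_8,B_6$ (resp.\ $C_{11},D_{10}$) forces the bitangency of $B_5$ to $L$ (resp.\ the nodality of $\tilde\Gamma_2$), so the hypotheses of Proposition~\ref{prop: DE geometry}(i) hold; the relations in \eqref{eq: ideal DE} also exhibit $X$ as the reduced union $X_1\cup_\Gamma X_2$ glued along the genus-one curve $\Gamma$ with the prescribed different. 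Hence Proposition~\ref{prop: DE geometry} identifies $X$ with a $2$-Gorenstein stable I-surface of type~{DE}; in particular $X$ is slc, $K_X$ is ample, and $X$ has the invariants of Definition~\ref{def: I surface}.

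It remains to compute $\omega_X$. Since $X$ is embedded by its canonical ring, $\omega_X=\OO_X(1)$; alternatively, as in the proof of Proposition~\ref{prop: algebraic type B}, one computes the minimal $\OO_{\IP}$-free resolution of $\OO_X$, checks that it is self-dual of length six (the homogeneous coordinate ring is Gorenstein) with last term $\OO_{\IP}(-32)$, and reads off $\omega_X=\mathcal{E}xt^6(\OO_X,\omega_{\IP})=\OO_X(1)$ using $\omega_{\IP}=\OO_{\IP}(-31)$. For the final assertion it suffices, exactly as in that proof, to check that $X$ avoids the strata of $\IP(1,1,2,2,3,4,5,6,7)$ on which $\OO_{\IP}(2)$ is not locally free; this follows from \eqref{eq: ideal DE} since suitable powers of the coordinates of weight $\ge 3$ --- among them $e^2$, $f^2$, $g^2$, $d^3$, and $c^2$ (the latter via the derived relation $c^2=a_0^2d$) --- occur with non-zero constant coefficient, forcing $X$ to miss every such stratum, so that $\omega_X^{[2]}=\OO_X(2)$ is invertible. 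The routine but lengthy part of all this is the identification of the fibre-product ring with the presentation \eqref{eq: ideal DE}, in particular the ideal intersection; the genuinely delicate point is the converse, where one must be sure that ``general coefficients'' really places $X$ inside the locus handled by Proposition~\ref{prop: DE geometry} --- it is only via that identification that semi-log-canonicity and ampleness of $K_X$, invisible from the equations, are obtained.
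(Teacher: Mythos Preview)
Your proposal follows essentially the same route as the paper: compute the ideal of $X$ as the intersection $I(X_1)\cap I(X_2)$ of the component ideals from Lemmas~\ref{lem: X_1} and~\ref{lem: X_2}, then verify $\omega_X=\OO_X(1)$ via the free resolution (as in Proposition~\ref{prop: algebraic type B}) and check that $X$ avoids the loci $\IP(3_c,6_f)$, $\IP(4_d)$, $\IP(5_e)$, $\IP(7_g)$ where $\OO_{\IP}(2)$ fails to be invertible. The paper is terser about slc for general $A,B,C,D$, simply invoking the component descriptions, whereas you route through Proposition~\ref{prop: DE geometry}; both are fine.

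Two small corrections. First, you write that ``the generality of $A_8,B_6$ (resp.\ $C_{11},D_{10}$) forces the bitangency of $B_5$ to $L$ (resp.\ the nodality of $\tilde\Gamma_2$)''. This is backwards: the bitangency and nodality are already hard-wired into the shape $b_0(d-b_0^2)^2$ and $d(d-b_0^2)^2$ of the equations, independently of $A,B,C,D$; generality is only needed to avoid \emph{worse} singularities. Second, your justification that $X$ misses $\IP(3_c,6_f)$ via ``the derived relation $c^2=a_0^2d$'' needs a word of care: this relation does hold on $X$ (on $\tilde X_1$ it reads $x^2y_1^2=x^2y_1^2$, on $\tilde X_2$ both sides vanish), and indeed $c^2-a_0^2d$ lies in $I(X_1)\cap I(X_2)$, but it is not one of the $3\times 3$ minors of the displayed $4\times4$ matrix (only $a_1(c^2-a_0^2d)$ and $b_1(c^2-a_0^2d)$ arise there). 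So when you assert that $c^2$ ``occurs with non-zero constant coefficient'' among the listed relations, you are implicitly using that the ideal intersection produces $c^2-a_0^2d$ as a genuine degree-$6$ generator. This is true and is what the paper means by ``$c^2$ appears in a separate equation'', but you should say so rather than calling it merely ``derived''.
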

\begin{proof} 
The intersection of ideals of the components described in Lemma \ref{lem: X_1} and Lemma \ref{lem: X_2} can be calculated by computer or by hand giving \eqref{eq: ideal DE}.
The two components of $X$ are $X_1=X\cap\{a_1=b_1=0\}$ and $X_2=X\cap\{a_0=c=0\}$ and one can confirm that this recovers the ideals of the components.

Now consider a surface $X$ defined by these equations with $A, B, C, D$ general. 
The Hilbert series of $X$ can be determined by computer or follows from Proposition \ref{prop: canonical section} as soon as we have established that $X$ is 2-Gorenstein stable. 

The canonical sheaf $\omega_X = \ko_X(1)$ can be computed as in the proof of Proposition \ref{prop: algebraic type B}.

The sheaf $\OO_\pp(2)$ is invertible on $\pp(1,1,2,2,3,4,5,6,7)$ away from the loci $\pp(3_c,6_f)$, $\pp(4_d)$, $\pp(5_e)$ and $\pp(7_g)$. Clearly, $X$ does not meet the last three of these loci, because monomials $d^3$, $e^2$, $g^2$ appear with non-zero coefficient in some equations defining $X$. Moreover, $X\cap\pp(3,6)$ is also empty, because $c^2$ and $f^2$ also appear in separate equations defining $X$. Hence the restriction $\omega_X^{[2]}=\OO_\pp(2)|_X$ is invertible.

Using the description of the components, one can see that for general equations the resulting surface has slc singularities.
\end{proof}
\begin{rem}
 The non-existence of an involution as in Lemma \ref{lem: DE no involution} can also be checked in this algebraic model. 
\end{rem}


  \def\cprime{$'$}

  \end{document}